\numberwithin{equation}{section}
\declaretheorem[style=theorem,sibling=equation]{theorem}%
\declaretheorem[style=theorem,sibling=equation]{proposition}%
\declaretheorem[style=theorem,sibling=equation]{lemma}%
\declaretheorem[style=theorem,sibling=equation]{corollary}%
\declaretheorem[style=definition,sibling=equation]{definition}%
\declaretheorem[style=remark,sibling=equation]{remark}%
\DeclareFontFamily{U}{mathb}{\hyphenchar\font45}
\DeclareFontShape{U}{mathb}{m}{n}{
      <5> <6> <7> <8> <9> <10> gen * mathb
      <10.95> mathb10 <12> <14.4> <17.28> <20.74> <24.88> mathb12
      }{}
\DeclareSymbolFont{mathb}{U}{mathb}{m}{n}
\DeclareMathSymbol{\cuadrado}        {2}{mathb}{"05}% name to be checked
\DeclareFontFamily{U}{matha}{\hyphenchar\font45}
\DeclareFontShape{U}{matha}{m}{n}{
      <5> <6> <7> <8> <9> <10> gen * matha
      <10.95> matha10 <12> <14.4> <17.28> <20.74> <24.88> matha12
      }{}
\DeclareSymbolFont{matha}{U}{matha}{m}{n}
\DeclareMathSymbol{\circulo}          {2}{matha}{"05}
\newcommand{\operad}[1]{\mathcal{#1}} % operad styling
\newcommand{\C}{\operad{C}} % Comm
\newcommand{\A}{\operad{A}} % Ass
\renewcommand{\O}{\operad{O}}
\renewcommand{\P}{\operad{P}}
\newcommand{\F}{\operad{F}}
\newcommand{\J}{\operad{J}}
\newcommand{\BV}{\operad{BV}}
\newcommand{\EBV}{\operad{EBV}}
\newcommand{\CBV}{\operad{CBV}_\infty}
\newcommand{\ECBV}{\operad{ECBV}_\infty}
\newcommand{\Lie}{\mathcal{L}}
\renewcommand{\H}{\mathcal{H}}
\newcommand{\G}{\mathcal{G}}
\newcommand{\E}[1]{\operad{E}(#1)} % endomorphism operad
\newcommand{\End}[1]{\operatorname{End}(#1)} % endomorphism DG algebra
\NewDocumentCommand{\Diff}{O{} m}{\operatorname{D}_{#1}(#2)} % differential operators
\renewcommand{\L}[1]{L_{#1}}
\NewDocumentCommand{\HdR}{O{*} m}{H_{\operatorfont{dR}}^{#1}(#2)} % de Rham cohomology
\NewDocumentCommand{\CdR}{O{} m}{\Omega^{#1}(#2)} % de Rham complex
\NewDocumentCommand{\SDR}{O{normal} m m o o o}{
  \begin{tikzcd}[ampersand replacement=\&, column sep=#1]
  {#2}\arrow[r, shift left = .5ex, "{#4}"]
  \&
  {#3}\arrow[l, shift left = .5ex, "{#5}"] \arrow[loop right, "{#6}"]
\end{tikzcd}}
\NewDocumentCommand{\sym}{o}{\mathbb{S}_{\IfValueT{#1}{#1}}} % symmetric group
\NewDocumentCommand{\trees}{o}{\operatorname{Trees}^{\cuadrado,\circulo}_{\IfValueT{#1}{#1}}} % trees
\newcommand{\Operads}{\operatorname{Op}}
\newcommand{\Ch}{\operatorname{Ch}}
\newcommand{\Fin}{\operatorname{Fin}}
\newcommand{\card}[1]{[#1]}
\newcommand{\SMod}{\operatorname{Mod}^{\sym}}
\newcommand{\colim}{\operatornamewithlimits{colim}}
\newcommand{\shuffle}[2]{\operatorname{Sh}_{#1,#2}}
\newif\ifshowcomments
\title[Operations on the de Rham cohomology of Poisson\dots]{Operations on the de Rham cohomology of Poisson and Jacobi manifolds}
\author{Ai Guan}
\email[Ai Guan]{aguan@us.es}
\author{Fernando Muro}
\email[Fernando Muro]{fmuro@us.es}
\urladdr[Fernando Muro]{https://personal.us.es/fmuro/}
\address{
  Universidad de Sevilla, 
  Facultad de Matemáticas, 
  Departamento de Álgebra, 
  Calle Tarfia s/n, 
  41012 Sevilla, 
  Spain
}
\thanks{The authors were partially supported by the grant P20\_01109 (JUNTA/FEDER, UE). 
  The second author was also partially supported by the grant PID2020-117971GB-C21
funded by MCIN/AEI/10.13039/501100011033. The second author would like to warmly thank Joana Cirici, Vladimir Dotsenko, Geoffroy Horel, Luis Narváez, and Andrew Tonks for very interesting and useful discussions. Cirici suggested considering generalized Poisson manifolds (\Cref{sec:generalized_poisson}) and Cirici and Horel proposed \Cref{cor:G_BV_formality}.
}
\begin{document}

\begin{abstract}
  We prove that the (homotopy) hypercommutative algebra structure on the de Rham cohomology of a Poisson or Jacobi manifold defined by several authors is (homotopically) trivial, i.e.~it reduces to the underlying (homotopy) commutative algebra structure. We do so by showing that the DG operads which codify the algebraic structure on the de Rham complex of Poisson and Jacobi manifolds, generated by the exterior product and the interior products with the structure polyvector fields, are quasi-isomorphic to the commutative suboperad. Hence, there is no hope to endow the de Rham cohomology of such manifolds with any (higher) structure beyond the well-known (homotopy) commutative algebra structure, which exists for any smooth manifold. We proceed similarly with the commutative $BV_\infty$-algebra structure on the de Rham complex of a generalized Poisson supermanifold. 
\end{abstract}

\maketitle

\tableofcontents

\section{Introduction}

Given a Poisson manifold $(M,\pi)$ with Poisson bivector field $\pi\in\Gamma(\Lambda^2T(M))$, \Citeauthor{koszul_1985_crochet_schoutennijenhuis_cohomologie} defined in \cite{koszul_1985_crochet_schoutennijenhuis_cohomologie} a Lie bracket on the de Rham complex of differential forms $\CdR{M}$, which endows it with a Lie algebra structure of degree $-1$. This bracket is called the \emph{Koszul bracket}. It is compatible with the exterior product, in the sense that they both distribute according to the \emph{Gerstenhaber relation}, which says that $[x,-]$ is a degree $|x|-1$ derivation of the exterior product,
\[[x,yz]=[x,y]z+(-1)^{(|x|-1)|y|}y[x,z].\]

\Citeauthor{koszul_1985_crochet_schoutennijenhuis_cohomologie} noted that his bracket induces the trivial bracket on de Rham cohomology $\HdR{M}$. More recently, \Citeauthor{sharygin_talalaev_2008_lieformality_poisson_manifolds} proved that the Koszul bracket is homotopically trivial at the cochain level. More precisely, they showed in \cite{sharygin_talalaev_2008_lieformality_poisson_manifolds} that $\CdR{M}$ endowed with the Koszul bracket is quasi-isomorphic to $\HdR{M}$ equipped with the abelian Lie bracket, as  degree $-1$ Lie algebras. Hence, the homotopical triviality of the Koszul bracket follows from Lie formality. (Recall that $\CdR{M}$ need not be formal as a commutative algebra.)

The Gerstenhaber relation follows from the fact that the Koszul bracket measures the deviation of the Lie derivative $\Delta=\L{\pi}$ with respect to $\pi$ from being a derivation of the exterior product,
\[[x,y]=\Delta(xy)-\Delta(x)y-(-1)^{|x|}x\Delta(y),\]
and $\Delta$ is a differential operator of degree $-1$ and order $\leq 2$. 
This operator endows the commutative algebra $\CdR{M}$ with the structure of a Batalin--Vilkovisky algebra since, in addition, $\Delta$ satisfies $\Delta^2=0$ and $[\Delta,d]=\Delta d+d\Delta=0$. 

The operator $\Delta$ is trivial in cohomology $\HdR{M}$, just like the Koszul bracket. Moreover, by Cartan's homotopy formula $\Delta=\L{\pi}=[i_\pi,d]=i_\pi d-di_\pi$ is null-homotopic with explicit null-homotopy $i_\pi$, the interior product with respect to $\pi$. This trivialization of $\Delta$ can be used to define a new sequence of operations $m_n$ of arity $n$ and degree $2(2-n)$ on $\HdR{M}$, $n\geq2$, extending the commutative product, which is $m_2$. They assemble to a hypercommutative algebra structure (also known as formal Frobenius manifold structure), see \cite{barannikov_kontsevich_1998_frobenius_manifolds_formality,manin_1999_frobenius_manifolds_quantum,losev_shadrin_2007_zwiebach_invariants_getzler,park_2007_semiclassical_quantum_fields}. There may be other hypercommutative algebra structures on the de Rham cohomology of such manifolds, see e.g.~\cite{cirici_horel_2023_formality_hypercommutative_algebras}, but we will always refer to the previous one. A hypercommutative algebra structure resembles the sequence of operations of an $\A_\infty$- or $\C_\infty$-algebra, but it is totally unrelated. Indeed, a hypercommutative algebra structure is primary, and there is a separate higher order notion of $\H_\infty$-algebra. Actually, $\HdR{M}$ is endowed with a minimal $\H_\infty$-algebra structure extending the aforementioned hypercommutative algebra structure. This evinces the fact that the de Rham complex $\CdR{M}$ is a hypercommutative algebra, see \cite{drummond-cole_vallette_2013_minimal_model_batalin,khoroshkin_markarian_shadrin_2013_hypercommutative_operad_homotopy,dotsenko_shadrin_vallette_2015_rham_cohomology_homotopy}. 

This article was initially motivated by the desire to obtain explicit computations of such hypercommutative and $\H_\infty$-algebra structures on $\HdR{M}$. 

We can better explain the previous plethora of algebraic structures in operadic terms as follows.
Consider the operads for the following kinds of algebras:
\begin{itemize}
  \item $\C$ is the operad for commutative algebras.
  \item $\Lie$ is the operad for Lie algebras.
  \item $\Lie[1]$ is the operadic suspension of $\Lie$. An $\Lie[1]$-algebra structure on a complex $X$ is the same as an $\Lie$-algebra structure on its desuspension $X[-1]$.
  \item $\G$ is the operad for Gerstenhaber algebras.
  \item $\BV$ is the operad for Batalin--Vilkovisky algebras.
  \item $\H$ is the operad for hypercommutative algebras.
\end{itemize}
All these operads are graded operads, i.e.~DG operads with trivial differential. Moreover, all of them, with one exception, are quadratic Koszul, see \cite{loday_vallette_2012_algebraic_operads} and the references therein. The only exception is $\BV$, which is not quadratic, but almost, and it is Koszul in a broader sense \cite{galvez-carrillo_tonks_vallette_2012_homotopy_batalin_vilkovisky}. In addition, we consider the DG operads: 
\begin{itemize}
  \item $\BV/\Delta$, the homotopy quotient of the Batalin--Vilkovisky operad by the suboperad generated by the operator $\Delta$ \cite{drummond-cole_vallette_2013_minimal_model_batalin,khoroshkin_markarian_shadrin_2013_hypercommutative_operad_homotopy}.
  \item $\E{X}$, the endomorphism operad of a complex $X$.
\end{itemize}

We have a commutative diagram in the model category $\Operads$ of DG operads \cite{hinich_1997_homological_algebra_homotopy,hinich_2003_erratum_homological_algebra},
\begin{equation}\label{eq:Poisson_diagram}
  \begin{tikzpicture}[xscale=2.5, yscale=2.3, align=center]
    \node (End) at (0,0) {$\E{\CdR{M}}$};
    \node (BV/D) at (0,1) {$\BV/\Delta$};
    \node (BV) at (0,2) {$\BV$};
    \node (Hypercomm) at (-.75,1.5) {$\H$};
    \node (Gerst) at (0,3) {$\G$};
    \node (Comm) at (-1.5,2) {$\C$};
    \node (Lie) at (1.5,2) {$\Lie[1]$};
    \draw[->] (Gerst) edge node [right] {\scriptsize Koszul bracket \\[-2mm] \scriptsize comes from \\[-2mm] \scriptsize Lie derivative $\L{\pi}$}  (BV);
    \draw[->] (BV) -- (BV/D);
    \draw[->] (BV/D) edge node [fill=white] {\scriptsize interior \\[-2mm] \scriptsize product $i_\pi$} (End);
    \draw[->] (Comm) edge [bend left] node [sloped, above] {\scriptsize inclusion} (Gerst);
    \draw[->] (Lie) edge [bend right] node [sloped, above] {\scriptsize inclusion} (Gerst);
    \draw[->] (Hypercomm) edge node [sloped, above] {$\scriptstyle\sim$} (BV/D);
    \draw[->] (Comm) edge [bend right] node [left=2mm] {\scriptsize exterior \\[-2mm] \scriptsize product} (End);
    \draw[->] (Lie) edge [bend left] node [right=2mm] {\scriptsize Koszul \\[-2mm] \scriptsize bracket} (End);
    \draw[->, bend right=23.5mm] (Gerst) edge node [fill=white, near start] {\scriptsize Gerstenhaber \\[-2mm] \scriptsize relation} (End);
    \draw[->, bend left=23.5mm] (BV) edge node [fill=white, near start] {\scriptsize Lie derivative $\L{\pi}$} (End);
    \draw[-, line width=2mm, white] (Comm) -- (Hypercomm);
    \draw[->] (Comm) edge node [rotate=-32, below=1, fill=white, xshift=-.7mm]  {\scriptsize inclusion} (Hypercomm);
  \end{tikzpicture}
\end{equation}
%The map departing from $\Lie[-1]$ is homotopically trivial (i.e.~it factors through the initial operad) by \cite{sharygin_talalaev_2008_lieformality_poisson_manifolds}. 
The arrow with source $\H$ is a quasi-isomorphism by \cite{drummond-cole_vallette_2013_minimal_model_batalin,khoroshkin_markarian_shadrin_2013_hypercommutative_operad_homotopy}. The minimal $\H_\infty$-algebra structure on $\HdR{M}$ is obtained from the hypercommutative algebra structure on $\CdR{M}$ applying the well-known \emph{homotopy transfer theorem} \cite[Theorem 10.3.1]{loday_vallette_2012_algebraic_operads}.

Let $\EBV$ be the DG operad whose algebras are commutative algebras $A$ endowed with a differential operator $i\colon A\to A$ of degree $-2$ and order $\leq 2$ such that the following triple commutator involving $i$ and the cochain differential $d\colon A\to A$ vanishes,
\[[i,[i,d]]=0.\]
The acronym $\EBV$ stands for \emph{exact Batalin--Vilkovisky algebra}, since any such algebra gives rise to a Batalin--Vilkovisky algebra by setting $\Delta=[i,d]$. The canonical example is $A=\CdR{M}$ with $i=i_\pi$ the interior product with respecto to $\pi$. In this case, the vanishing condition follows from the equation $[\pi,\pi]=0$ and the formulas relating the Schouten--Nijenhuis bracket in polyvector fields $\Gamma(\Lambda^*T(M))$ with commutators of Lie differentials and interior products on $\CdR{M}$ \cite{cartier_1994_fundamental_techniques_theory}. 

The null-homotopy for the Batalin--Vilkovisky operator $\Delta$ on $\CdR{M}$ is the interior product $i_\pi$, hence we can easily prove the following result.

\begin{restatable}{proposition}{Poissonfactorization}\label{prop:Poisson_factorization}
  Given a Poisson manifold $(M,\pi)$, the morphism $\BV/\Delta\to\E{\CdR{M}}$ in \eqref{eq:Poisson_diagram} factors as \[\BV/\Delta\longrightarrow\EBV\longrightarrow\E{\CdR{M}}.\]
\end{restatable}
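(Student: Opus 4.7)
The plan is to factor the arrow $\BV/\Delta\to\E{\CdR{M}}$ of \eqref{eq:Poisson_diagram} through $\EBV$ in two steps: first endow $\CdR{M}$ with an $\EBV$-algebra structure, producing the right-hand map $\EBV\to\E{\CdR{M}}$; then observe that every $\EBV$-algebra carries a canonical BV-algebra structure whose BV-operator comes equipped with a tautological null-homotopy, so that the universal property of $\BV/\Delta$ as a homotopy quotient supplies the left-hand map $\BV/\Delta\to\EBV$. The triangle then commutes by inspection on generators.

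For the first step, I would take the usual exterior product on $\CdR{M}$ together with $i:=i_\pi$. The operator $i_\pi$ has degree $-2$ and is a differential operator of order at most $2$, since contraction with a bivector is manifestly so. The required vanishing $[i_\pi,[i_\pi,d]]=0$ reduces, via Cartan's formula $[i_\pi,d]=\pm\L{\pi}$, to $[i_\pi,\L{\pi}]=\pm i_{[\pi,\pi]}=0$, the last equality being the Poisson condition $[\pi,\pi]=0$. This is a standard instance of the formulas of \cite{cartier_1994_fundamental_techniques_theory} relating the Schouten--Nijenhuis bracket with commutators of Lie derivatives and interior products on forms, and it provides the operad map $\EBV\to\E{\CdR{M}}$.

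For the second step, given any $\EBV$-algebra $A$, I would set $\Delta:=[i,d]$. Then $\Delta$ has degree $-1$ and is a second-order differential operator on $A$, while $\Delta^2=0$ follows from the hypothesis $[i,[i,d]]=0$ together with $d^2=0$ through the graded Jacobi identity in $\E{A}$. This exhibits a canonical operad morphism $\BV\to\EBV$. Moreover, since $\Delta=[d,i]$ equals (up to sign) the image of $i$ under the internal differential of the arity-one part of $\EBV$, the element $i$ is an explicit null-homotopy for $\Delta$. By the characterization of $\BV/\Delta$ as the homotopy quotient of $\BV$ by the suboperad generated by $\Delta$ \cite{drummond-cole_vallette_2013_minimal_model_batalin,khoroshkin_markarian_shadrin_2013_hypercommutative_operad_homotopy}, a morphism $\BV\to\EBV$ equipped with such a null-homotopy yields the required map $\BV/\Delta\to\EBV$.

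To finish, I would check that the composite $\BV/\Delta\to\EBV\to\E{\CdR{M}}$ coincides with the arrow of \eqref{eq:Poisson_diagram}: both send the exterior product to itself, the BV-operator to $\L{\pi}=[i_\pi,d]$, and the distinguished null-homotopy of $\Delta$ to $i_\pi$, matching the labels in the diagram. The only technical point is the signed bookkeeping in verifying $[i_\pi,[i_\pi,d]]=0$ and $\Delta^2=0$, both of which are routine applications of graded Jacobi and the Cartier formulas; hence, in accordance with the remark preceding the statement, I do not expect any substantive obstacle.
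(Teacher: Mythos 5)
Your proposal follows essentially the same route as the paper: the right-hand map is the $\EBV$-algebra structure on $\CdR{M}$ given by $i=i_\pi$ (the paper's \Cref{prop:Poisson_EBV}, proved exactly as you do via Cartan's formula and $[\pi,\pi]=0$), and the left-hand map is the operad morphism $\BV/\Delta\to\EBV$ sending $\mu\mapsto\mu$, $\Delta\mapsto[i,d]$, $\phi_1\mapsto i$ (the paper's \Cref{lem:BV/D_to_EBV}). The one point you should make explicit is in your second step: you assert that a morphism $\BV\to\EBV$ together with a null-homotopy of $\Delta$ already yields a map out of $\BV/\Delta$. As the paper presents it (\Cref{def:BV/D_operad}), $\BV/\Delta$ has generators $\phi_n$ for \emph{all} $n\geq1$ subject to the coherence relations corresponding to \eqref{eq:phi_BV}, so a single null-homotopy $\phi_1$ does not suffice in general: one must also produce $\phi_2,\phi_3,\dots$, and e.g.\ $d(\phi_2)=\tfrac12[\phi_1,\Delta]$ need not be solvable for an arbitrary null-homotopy. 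Here it is solvable with $\phi_n=0$ for $n\geq2$ precisely because all iterated commutators $[i,\dots,[i,[i,d]]\dots]$ vanish, which is an immediate consequence of the defining $\EBV$ relation $[i,[i,d]]=0$; this verification is the content of the paper's \Cref{lem:EBV_is_BV/D}. With that one check added, your argument is complete and coincides with the paper's.
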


One of our main results establishes that the exact Batalin--Vilkovisky operad is quasi-isomorphic to its commutative suboperad.

\begin{restatable}{theorem}{Poisson}\label{thm:Poisson}
  The DG operad morphism $\C\to\EBV$ obtained as the composition of $\C\to\BV/\Delta$ in \eqref{eq:Poisson_diagram} and the first morphism $\BV/\Delta\longrightarrow\EBV$ in the factorization of \Cref{prop:Poisson_factorization} is a quasi-isomorphism.
\end{restatable}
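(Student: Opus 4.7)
My plan is to construct an explicit contracting homotopy of $\EBV$ onto its commutative suboperad $\C$, building it from a Koszul-type contraction already visible at the level of unary operations. Setting $\Delta := \partial(i)$ (degree $-1$), applying $\partial$ to $[i,\Delta]=0$ yields $\Delta^2=0$, and applying $\partial$ to the order-$\leq 2$ relation for $i$ yields the order-$\leq 2$ relation for $\Delta$. In particular, the unary part $\EBV(1)$ is isomorphic to $\mathbb{K}[i]\otimes\mathbb{K}[\Delta]/(\Delta^2)$ with $\partial(i^a) = a\,i^{a-1}\Delta$, the standard acyclic Koszul complex that retracts onto $\mathbb{K}$ concentrated in degree $0$.

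The next step is to establish a PBW-style normal form for $\EBV(n)$ in arbitrary arity. Using commutativity of $\mu$, the seven-term order-$\leq 2$ relations for $i$ and $\Delta$, and $[i,\Delta]=0$, every operation reduces to a linear combination of trees whose $i$- and $\Delta$-labeled internal nodes have at most two children, with the children themselves built recursively from $\mu$, $i$, and $\Delta$ under the same restriction. Confluence of this rewriting can be verified via operadic Gröbner bases or a careful induction on tree complexity. On each normal-form tree the differential acts node-by-node via the Leibniz rule for $\partial(i)=\Delta$, so $\EBV(n)$ decomposes, with respect to a suitable filtration, as a tensor product of Koszul complexes of the type above, with the pure commutative monomial $x_1 x_2 \cdots x_n$ as the unique surviving cohomology generator.

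Finally, one assembles the local Koszul contractions into a global homotopy $h\colon \EBV(n)\to\EBV(n)$ of degree $-1$ which, on a normal-form tree, selects the first $\Delta$-labeled node in a fixed canonical ordering and replaces it with an $i$-node (with the combinatorial coefficients dictated by the factor-wise Koszul contraction), and verifies $\partial h + h\partial = \mathrm{id} - p$, where $p$ is the projection onto trees with no $i$- or $\Delta$-nodes, i.e.\ onto $\C(n)$. The main obstacle is the normal-form step: the seven-term order-$\leq 2$ relation is not quadratic and mixes tree shapes of different sizes, so showing that the proposed contracting homotopy descends unambiguously to the quotient by the relations will require either a careful operadic Gröbner argument or an auxiliary filtration reducing the problem to a tractable quadratic presentation.
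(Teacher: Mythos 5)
There is a genuine gap, and you have put your finger on it yourself: the entire argument hinges on a PBW/normal-form theorem for $\EBV$ (confluence of the rewriting system generated by the commutator relation and the non-quadratic order-$\leq 2$ relation), and this is precisely the step you do not carry out. Without it, nothing guarantees that your proposed basis of ``normal-form trees'' is linearly independent in the quotient, that the differential acts node-by-node on it, or that the local Koszul contractions glue to a well-defined map on $\EBV(n)$ rather than only on the free operad before imposing the relations. A Gr\"obner-basis verification for an operad with a symmetric binary generator and a seven-term cubic relation is a substantial piece of work, not a routine check. There is also a secondary problem with the homotopy you propose: selecting ``the first $\Delta$-labelled node in a fixed canonical ordering'' is the Eilenberg--MacLane tensor-product contraction, which is not compatible with tree isomorphisms; since $\P(n)$ is a colimit over a groupoid of trees, a homotopy defined by an ordering-dependent rule need not descend to $\P(n)$ at all. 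The paper must symmetrize (average over all orderings, \Cref{def:SDR_symmetric_n_tensor_product} and \Cref{lem:free_extension_SDR_formula}) exactly to fix this.

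The paper's proof sidesteps the normal-form problem entirely, and this is the idea your proposal is missing. Instead of contracting $\EBV$ directly, one contracts the \emph{relatively free} operad $\P=\C\amalg\F(C)$ onto $\C$, where $C$ is the acyclic two-term complex spanned by $i$ and $d(i)$; here the underlying $\sym$-module is explicitly a sum over trees and no confluence argument is needed (\Cref{thm:operad_SDR}). One then invokes \Cref{thm:ideal}: if the contracting homotopy $h_C$ carries the generators of the relation ideal $I$ into $I$, then $I$ is acyclic and $\C\to\P/I=\EBV$ is a quasi-isomorphism. Thanks to the derivation-like formula of \Cref{thm:operad_SDR}~\eqref{it:h_composition}, this reduces to two one-line computations on the generators $id(i)-d(i)i$ and the order-$\leq 2$ relation, both of which $h_C$ sends to $0$. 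If you want to rescue your approach, you should reorganize it along these lines: prove acyclicity of the ideal rather than a normal form for the quotient.
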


We derive this and other similar results from a \namecref{thm:ideal} detecting injective quasi-isomorphisms of DG operads (\Cref{thm:ideal}). We obtain the following immediate consequences of \Cref{thm:Poisson}.

\begin{definition}
  The \emph{trivial hypercommutative algebra structure} on a commutative algebra consists of the commutative product $m_2$ and $m_n=0$ for $n>2$.
\end{definition}

The inclusion $\C\subset\H$ is an isomorphism in degree $0$, and it has a necessarily unique retraction $\H\twoheadrightarrow\C$. A trivial hypercommutative algebra is the same as the restriction of scalars along $\H\twoheadrightarrow\C$ of a commutative algebra. 

\begin{restatable}{corollary}{trivialhypercommutativecohomology}\label{cor:trivial_hypercommutative_cohomology}
  For any Poisson manifold $(M,\pi)$, the hypercommutative algebra structure on $\HdR{M}$  defined in \cite{barannikov_kontsevich_1998_frobenius_manifolds_formality,manin_1999_frobenius_manifolds_quantum,losev_shadrin_2007_zwiebach_invariants_getzler,park_2007_semiclassical_quantum_fields} is trivial.
\end{restatable}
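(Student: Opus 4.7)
I would prove this corollary by combining \Cref{prop:Poisson_factorization} and \Cref{thm:Poisson} with a degree count on the endomorphism operad. Recall that the hypercommutative operations $m_n$ have arity $n$ and internal degree $2(2-n)$. The operation $m_n$ on $\HdR{M}$ is obtained by taking the basic generator $m_n\in\H(n)$, which is a cocycle since $\H$ has trivial differential, sending it through the chain of operad morphisms $\H\xrightarrow{\sim}\BV/\Delta\to\E{\CdR{M}}$, and taking the induced operation on cohomology of the resulting cocycle in $\E{\CdR{M}}(n)=\operatorname{Hom}(\CdR{M}^{\otimes n},\CdR{M})$.

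By \Cref{prop:Poisson_factorization}, the morphism $\BV/\Delta\to\E{\CdR{M}}$ factors through $\EBV$, so the cocycle representing the action of $m_n$ arises from a cocycle in $\EBV(n)$ of internal degree $2(2-n)$. By \Cref{thm:Poisson}, the cohomology of $\EBV(n)$ agrees with that of $\C(n)$, which is concentrated in internal degree $0$. For $n>2$ we have $2(2-n)<0$, so the relevant cocycle must already be a coboundary in $\EBV(n)$. Its image in $\E{\CdR{M}}(n)$ is then a coboundary as well, and coboundaries in the endomorphism operad induce the zero operation on cohomology (they send cocycles to coboundaries).

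Therefore $m_n=0$ on $\HdR{M}$ for every $n>2$, while $m_2$ remains the commutative product inherited from the inclusion $\C\subset\H$, yielding precisely the trivial hypercommutative structure. The main obstacle I foresee is justifying that the cohomology-level operations $m_n$ of the Barannikov--Kontsevich construction are indeed computed by this naive operadic action of an $\H$-generator, rather than involving further tree corrections from the homotopy transfer theorem. This should follow from the naturality of homotopy transfer in Hinich's model structure on $\Operads$, together with the identification of the hypercommutative structure as the primary part of the transferred $\H_\infty$-algebra on $\HdR{M}$.
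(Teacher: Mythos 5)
Your argument is correct and is essentially the paper's own proof: the paper likewise combines the factorization through $\EBV$ (\Cref{prop:Poisson_factorization}) with the fact that the composite $\C\to\H\to\BV/\Delta\to\EBV$ is the quasi-isomorphism of \Cref{thm:Poisson}, so that on cohomology the action of $\H$ factors through the unique retraction $\H\twoheadrightarrow\C$; your degree count on $m_n$ is just the explicit unwinding of why that retraction kills all $m_n$ with $n>2$. The worry you raise at the end is resolved in the paper not by homotopy-transfer naturality but by definition: the structure attributed to Barannikov--Kontsevich et al.\ is identified (\Cref{cor:Poisson_cohomology_hypercommutative}) with the one induced on cohomology by the strict $\H$-algebra structure on $\CdR{M}$, which is exactly the naive operadic action you use.
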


\begin{restatable}{corollary}{trivialhypercommutativecohomologystrict}\label{cor:trivial_hypercommutative_cohomology_strict}
  For any Poisson manifold $(M,\pi)$, the hypercommutative algebra structure on $\CdR{M}$ defined in \cite{drummond-cole_vallette_2013_minimal_model_batalin,khoroshkin_markarian_shadrin_2013_hypercommutative_operad_homotopy,dotsenko_shadrin_vallette_2015_rham_cohomology_homotopy} is quasi-isomorphic to the trivial one.
\end{restatable}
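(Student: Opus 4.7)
The plan is to deduce the corollary from \Cref{thm:Poisson} and \Cref{prop:Poisson_factorization} via a comparison of $\EBV$-algebras on $\CdR{M}$. By \Cref{prop:Poisson_factorization}, the hypercommutative algebra structure of \cite{drummond-cole_vallette_2013_minimal_model_batalin,khoroshkin_markarian_shadrin_2013_hypercommutative_operad_homotopy,dotsenko_shadrin_vallette_2015_rham_cohomology_homotopy} on $\CdR{M}$ arises as the restriction along $\H\xrightarrow{\sim}\BV/\Delta\to\EBV$ of an $\EBV$-algebra $A_\pi$ on $\CdR{M}$ in which $i=i_\pi$. Let $A_0$ denote the $\EBV$-algebra on the same underlying commutative DG algebra $\CdR{M}$ in which $i=0$; this is a bona fide $\EBV$-algebra because the relation $[i,[i,d]]=0$ holds trivially.

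Both $A_\pi$ and $A_0$ restrict along $\C\to\EBV$ to the same $\C$-algebra, namely $\CdR{M}$ with its exterior product. Since $\C\to\EBV$ is a quasi-isomorphism of DG operads by \Cref{thm:Poisson}, in characteristic zero it induces a Quillen equivalence between the model categories of $\C$-algebras and $\EBV$-algebras \cite{hinich_1997_homological_algebra_homotopy,hinich_2003_erratum_homological_algebra}. Consequently $A_\pi$ and $A_0$ are weakly equivalent as $\EBV$-algebras, and further restriction along $\H\xrightarrow{\sim}\BV/\Delta\to\EBV$ produces a zigzag of quasi-isomorphisms of $\H$-algebras connecting the actual hypercommutative structure on $\CdR{M}$ (coming from $A_\pi$) with the $\H$-algebra restricted from $A_0$. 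This latter $\H$-algebra is the trivial hypercommutative structure, because the $\BV$-operator on $A_0$ is $\Delta=[i,d]=0$ with null-homotopy $i=0$, so the formulas of \cite{drummond-cole_vallette_2013_minimal_model_batalin,khoroshkin_markarian_shadrin_2013_hypercommutative_operad_homotopy} defining $\H\to\BV/\Delta$ force every $m_n$ with $n>2$ to act as zero.

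The main difficulty is the Quillen-equivalence step: one needs that a quasi-isomorphism of DG operads in characteristic zero induces an equivalence between the homotopy categories of algebras, which is standard but relies on the admissibility of $\C$ and $\EBV$. A secondary verification, that restricting $A_0$ along $\H\to\BV/\Delta\to\EBV$ yields precisely the trivial hypercommutative structure, can be checked either by unpacking the explicit formulas for $\H\to\BV/\Delta$, or operadically by observing that every arity $n>2$ element of $\EBV$ of degree $2(2-n)<0$ must involve the generator $i$ at least $n-2$ times, and hence maps to zero in $\E{A_0}$ where $i$ acts as zero.
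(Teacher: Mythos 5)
Your argument is correct and follows essentially the same route as the paper: both proofs realize the two hypercommutative structures as restrictions along $\H\to\BV/\Delta\to\EBV$ of the exact $BV$-algebras with $i=i_\pi$ and $i=0$, and then use \Cref{thm:Poisson} together with Hinich's theorem (the paper cites \cite[Theorem 4.7.4]{hinich_1997_homological_algebra_homotopy}) to conclude that two $\EBV$-algebras with the same underlying commutative algebra are quasi-isomorphic. Your closing verification that the $i=0$ structure restricts to the trivial hypercommutative algebra is a detail the paper leaves implicit, but it is consistent with its setup.
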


At the level of minimal resolutions, we similarly have an inclusion $\C_\infty\subset\H_\infty$ and a retraction $\H_\infty\twoheadrightarrow\C_\infty$, both induced by the previous ones. 

\begin{definition}
  The \emph{trivial $\H_\infty$-algebra} structure on a $\C_\infty$-algebra is the restriction of scalars along $\H_\infty\twoheadrightarrow\C_\infty$ of the $\C_\infty$-algebra structure.
\end{definition}

The de Rham cohomology $\HdR{M}$ carries a minimal $\C_\infty$-algebra structure transferred from the commutative algebra structure on $\CdR{M}$ given by the exterior product. \Cref{cor:trivial_hypercommutative_cohomology_strict} implies the following result.

\begin{restatable}{corollary}{trivialhypercommutativecohomologyinfty}\label{cor:trivial_hypercommutative_cohomology_infty}
  Given a Poisson manifold $(M,\pi)$, the minimal $\H_\infty$-algebra structure on $\HdR{M}$ obtained by homotopy transfer from the hypercommutative algebra structure on $\Omega(M)$ defined in \cite{drummond-cole_vallette_2013_minimal_model_batalin,khoroshkin_markarian_shadrin_2013_hypercommutative_operad_homotopy,dotsenko_shadrin_vallette_2015_rham_cohomology_homotopy} is $\infty$-isomorphic to a trivial one.
\end{restatable}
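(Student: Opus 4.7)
The plan is to chain together \Cref{cor:trivial_hypercommutative_cohomology_strict}, the homotopy transfer theorem, and a naturality property of transfer with respect to the operad morphism $\H_\infty\twoheadrightarrow\C_\infty$. First I would invoke \Cref{cor:trivial_hypercommutative_cohomology_strict} to produce a zigzag of $\H_\infty$-quasi-isomorphisms between $\CdR{M}$ with the hypercommutative algebra structure of \cite{drummond-cole_vallette_2013_minimal_model_batalin,khoroshkin_markarian_shadrin_2013_hypercommutative_operad_homotopy,dotsenko_shadrin_vallette_2015_rham_cohomology_homotopy} and $\CdR{M}$ endowed with the trivial hypercommutative structure, i.e.~the restriction of scalars along $\H\twoheadrightarrow\C$ of the commutative algebra structure coming from the exterior product.

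Next, I would appeal to the well-known fact that the minimal $\H_\infty$-model of an $\H_\infty$-algebra is unique up to $\infty$-isomorphism (a formal consequence of the homotopy transfer theorem, see \cite[Theorem 10.3.1]{loday_vallette_2012_algebraic_operads} and the subsequent discussion). Applied to the zigzag from the previous step, this yields an $\infty$-isomorphism between the minimal $\H_\infty$-model of the hypercommutative structure on $\CdR{M}$ and the minimal $\H_\infty$-model of the trivial hypercommutative structure on $\CdR{M}$, both carried on $\HdR{M}$.

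The final step is to identify the minimal $\H_\infty$-model of the trivial hypercommutative structure on $\CdR{M}$ as a trivial $\H_\infty$-algebra. Here I would use the naturality of homotopy transfer with respect to operad morphisms: for the map $\H_\infty\twoheadrightarrow\C_\infty$, and any $\C_\infty$-algebra viewed as an $\H_\infty$-algebra via restriction of scalars, transferring the $\H_\infty$-structure along a contraction gives the restriction of scalars of the $\C_\infty$-transfer. This follows directly from the tree-summation formulas for homotopy transfer, because the hypercommutative generators of arity $\geq 3$ act trivially on the source, so only binary trees contribute and the formulas collapse to the $\C_\infty$ ones. Applied to the commutative algebra $\CdR{M}$, this gives the trivial $\H_\infty$-structure on the standard minimal $\C_\infty$-model of $\HdR{M}$ transferred from the exterior product, which is trivial by definition.

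The main obstacle is the compatibility statement in the third step: it is essentially folklore but requires a careful reading of the explicit transfer formulas to make the claim precise. Everything else is either a direct appeal to the uniqueness of minimal $\H_\infty$-models or a citation of \Cref{cor:trivial_hypercommutative_cohomology_strict}.
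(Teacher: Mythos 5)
Your proposal is correct and follows the same route as the paper, which simply declares this corollary an immediate consequence of \Cref{cor:trivial_hypercommutative_cohomology_strict}. You have usefully made explicit the two facts the paper leaves implicit — uniqueness of minimal $\H_\infty$-models up to $\infty$-isomorphism and the compatibility of homotopy transfer with restriction of scalars along $\H_\infty\twoheadrightarrow\C_\infty$ — and your justification of the latter via the collapse of the tree-summation formulas is sound.
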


Recall that a DG algebra $A$ over a graded operad $\P$ is \emph{formal} if $A$ and $H^*(A)$ are quasi-isomorphic as DG algebras over $\P$, where the latter is equipped with the trivial differential. If $\P$ is Koszul, $A$ is formal if and only if the transferred $\P_\infty$-algebra structure on $H^*(A)$ is $\infty$-isomorphic to the plain $\P$-algebra structure induced on cohomology. A manifold $M$ is \emph{formal} if $\CdR{M}$ is formal as a $\C$-algebra, i.e.~if it is quasi-isomorphic to $\HdR{M}$ as DG commutative algebras. When $\CdR{M}$ is equipped with a $\P$-algebra structure, we say that $M$ is \emph{$\P$-formal} if $\CdR{M}$ is formal as a $\P$-algebra.

\begin{restatable}{corollary}{hypercommutativeformality}\label{cor:hypercommutative_formality}
  Given a Poisson manifold $(M,\pi)$, $M$ is $\H$-formal if and only if it is formal.
\end{restatable}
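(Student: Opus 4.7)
The plan is to split the biconditional into its two implications, both of which should follow formally from the previously stated corollaries together with the operad inclusion $\C\hookrightarrow\H$ and its retraction $\H\twoheadrightarrow\C$.

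For the direction $\Rightarrow$, suppose $M$ is $\H$-formal, so there is a zigzag of quasi-isomorphisms of $\H$-algebras between $\CdR{M}$ and $\HdR{M}$ (the latter equipped with its induced hypercommutative structure). Restriction of scalars along $\C\hookrightarrow\H$ leaves the underlying chain complexes and chain maps untouched, hence preserves quasi-isomorphisms, and turns these $\H$-algebras into their underlying commutative algebras: on $\CdR{M}$ this is simply the exterior product, and on $\HdR{M}$ it is the cup product. The resulting zigzag exhibits $M$ as formal.

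For the converse $\Leftarrow$, I would concatenate three zigzags of $\H$-algebra quasi-isomorphisms. First, \Cref{cor:trivial_hypercommutative_cohomology_strict} supplies a zigzag between $\CdR{M}$ with its Poisson-induced $\H$-structure and $\CdR{M}$ with the trivial $\H$-structure, the latter being the restriction of scalars of the commutative algebra $\CdR{M}$ along $\H\twoheadrightarrow\C$. Second, formality of $M$ yields a zigzag of $\C$-algebra quasi-isomorphisms between $\CdR{M}$ and $\HdR{M}$; applying restriction of scalars along $\H\twoheadrightarrow\C$, which again preserves quasi-isomorphisms, gives a zigzag of $\H$-algebra quasi-isomorphisms between the two trivial $\H$-algebras $\CdR{M}$ and $\HdR{M}$. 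Third, by \Cref{cor:trivial_hypercommutative_cohomology}, the Poisson-induced hypercommutative structure on $\HdR{M}$ is in fact the trivial one, so no further comparison is needed at the endpoint. Concatenating the three zigzags produces the required chain of $\H$-algebra quasi-isomorphisms between $\CdR{M}$ and $\HdR{M}$, witnessing $\H$-formality.

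The argument involves no genuine obstacle: the only tool beyond the cited corollaries is the general fact that restriction of scalars along any morphism of DG operads preserves quasi-isomorphisms of algebras, which is immediate from the definition since the underlying map of complexes is unchanged. All the real work has been absorbed into \Cref{thm:Poisson} and its consequences \Cref{cor:trivial_hypercommutative_cohomology,cor:trivial_hypercommutative_cohomology_strict}; once those are in hand, the \namecref{cor:hypercommutative_formality} is a purely formal manipulation with restriction of scalars.
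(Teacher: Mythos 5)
Your proof is correct and follows essentially the same route as the paper: the forward direction via restriction of scalars along $\C\subset\H$, and the converse by combining \Cref{cor:trivial_hypercommutative_cohomology_strict} with the observation that a zig-zag of commutative quasi-isomorphisms becomes a zig-zag between trivial $\H$-algebras under $\H\twoheadrightarrow\C$. Your explicit third step invoking \Cref{cor:trivial_hypercommutative_cohomology} to identify the induced structure on $\HdR{M}$ is left implicit in the paper but is a harmless (indeed clarifying) addition.
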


We similarly have triviality notions for algebras and $\infty$-algebras over the operads $\Lie[1]$, $\G$, and $\BV$ in \eqref{eq:Poisson_diagram}. The corresponding versions of \Cref{cor:trivial_hypercommutative_cohomology,cor:trivial_hypercommutative_cohomology_strict,cor:trivial_hypercommutative_cohomology_infty,cor:hypercommutative_formality} hold for exactly the same reasons. In fact, any operadic algebra structure which sits over $\EBV$ will be trivial by \Cref{thm:Poisson}. This rules out all possibilities of defining any homotopically non-trivial algebraic structure on the de Rham complex (or cohomology) of a Poisson manifold out of the exterior product and the interior product with its bivector field.

Homotopical triviality over $\Lie[1]$ is commonly known as being \emph{homotopy abelian}. Formality over $\Lie[1]$ holds for all Poisson manifolds by \cite{sharygin_talalaev_2008_lieformality_poisson_manifolds}, we actually recover this result as a consequence of triviality.

All previous results are proved in \Cref{sec:EBV}.

We have a similar situation for the more general class of Jacobi manifolds $(M,\pi,\eta)$, which are manifolds $M$ equipped with a bivector field $\pi\in\Gamma(\Lambda^2T(M))$ and a vector field $\eta\in \Gamma(T(M))$ such that the following relations hold in $\Gamma(\Lambda^*T(M))$:
\[[\pi,\pi]=2\eta\pi,\qquad [\eta,\pi]=0.\]
The precise statements can be found in \Cref{sec:Jacobi} below.

In the final \namecref{sec:generalized_poisson} (\Cref{sec:generalized_poisson}) we consider generalized Poisson supermanifolds. In this case, the de Rham complex is equipped with the structure of a commutative $BV_\infty$-algebra in the sense of \cite{kravchenko_2000_deformations_batalin_vilkovisky}. We show that it is trivial up to quasi-isomorphism. We also recover one of the main results of \cite{braun_lazarev_2013_homotopy_bv_algebras}, which asserts that it satisfies the degeneration property.

We work over a ground field $k$ of characteristic zero.
In results related to manifolds $k=\mathbb{R}$. We assume the reader has a good background on operads. We take \cite{loday_vallette_2012_algebraic_operads} as a standard reference and use their notation and terminology.
The usual conventions on trees are also succinctly explained in \cite{ginzburg_kapranov_1994_koszul_duality_operads}. Complexes are cochain complexes, i.e.~differentials rise the degree.

\section{Strong deformation retractions}\label{sec:SDRs}

In this section we recall the classical notion of strong deformation retraction of complexes and how they can be tensored in non-symmetric and symmetric ways.

\begin{definition}[{\cite[\S12]{eilenberg_mac_lane_1953_groups_pi}}]\label{def:SDR}
  A \emph{strong deformation retraction (SDR)}, also called \emph{contraction}, is a diagram
  \begin{center}
    \SDR{A}{B}[i][p][h]
  \end{center}
  where $A$ and $B$ are complexes, $i$ and $p$ are cochain maps satisfying
  \[pi=1,\]
  and $h$ is a cochain homotopy (degree $-1$) between $ip$ and the identity map, i.e.~
  \[dh+hd=1-ip.\]
\end{definition}

\begin{remark}
  \Citeauthor{eilenberg_mac_lane_1953_groups_pi} also require the first two of the following \emph{side conditions},
  \begin{align*}
    ph  & =0,  & 
    hi  & =0,  & 
    h^2 & =0.
  \end{align*}
  We can always assume they all hold, 
  see \cite[Remarks p.~199]{gugenheim_1982_perturbation_theory_homology} and \cite[2.1]{lambe_stasheff_1987_applications_perturbation_theory},
  but we do not need them.
\end{remark}

\begin{lemma}[{\cite[Lemma 3.1]{eilenberg_mac_lane_1954_groups_pi_ii}}]\label{def:SDR_tensor_products}
  Given two SDRs
  \begin{center}
    \SDR{A_1}{B_1}[i_1][p_1][h_1],\qquad \SDR{A_2}{B_2}[i_2][p_2][h_2],
  \end{center}
  we have a new SDR
  \begin{center}
    \SDR{A_1\otimes A_2}{B_1\otimes B_2}[i_1\otimes i_2][p_1\otimes p_2][h^\otimes_{1,2}],\qquad $h^\otimes_{1,2}=h_1\otimes 1+i_1p_1\otimes h_2$,
  \end{center}
  that we refer to as their \emph{tensor product}.
\end{lemma}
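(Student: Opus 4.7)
The plan is a direct computation, separating the retract identity from the chain-homotopy identity.

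First I would check that $(p_1\otimes p_2)(i_1\otimes i_2)=1$. By functoriality of $\otimes$ on morphisms of complexes this equals $p_1i_1\otimes p_2i_2$, which is $1\otimes 1$ by the retract conditions for the two input SDRs. No signs appear because all four maps are of degree $0$.

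Next, to verify the homotopy identity $d\,h^{\otimes}_{1,2}+h^{\otimes}_{1,2}\,d=1-(i_1\otimes i_2)(p_1\otimes p_2)$, I would write $D(f)=d\circ f-(-1)^{|f|}f\circ d$ for the induced differential on the internal hom, so that $D(h_j)=1-i_jp_j$ and $D(i_j)=D(p_j)=0$. The Koszul/Leibniz rule for $D$ on a tensor product of operators,
\begin{equation*}
  D(f\otimes g)=D(f)\otimes g+(-1)^{|f|}f\otimes D(g),
\end{equation*}
then reduces the verification to a one-line expansion. Applied to $h^{\otimes}_{1,2}=h_1\otimes 1+i_1p_1\otimes h_2$ it yields
\begin{equation*}
  D(h^{\otimes}_{1,2})=D(h_1)\otimes 1+i_1p_1\otimes D(h_2)=(1-i_1p_1)\otimes 1+i_1p_1\otimes(1-i_2p_2),
\end{equation*}
where the two terms involving $D(1)=0$ vanish. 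The summands $-i_1p_1\otimes 1$ and $+i_1p_1\otimes 1$ then cancel, leaving $1\otimes 1-i_1p_1\otimes i_2p_2=1-(i_1\otimes i_2)(p_1\otimes p_2)$, which is precisely the required identity.

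There is essentially no obstacle; the only thing to be careful with is the sign convention for the differential on $\mathrm{Hom}$ and for the tensor product of maps of non-zero degree, which is why the homotopy is chosen asymmetrically as $h_1\otimes 1+i_1p_1\otimes h_2$ rather than, e.g., $h_1\otimes 1+1\otimes h_2$: the asymmetric form is exactly what is needed so that the off-diagonal cross terms match up and cancel. (The symmetric alternative $h_1\otimes i_2p_2+1\otimes h_2$ works equally well, corresponding to swapping the roles of the two factors.)
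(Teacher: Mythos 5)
Your computation is correct, and it is the standard verification: the retract identity follows from functoriality of the tensor product in degree $0$, and the homotopy identity follows from the Leibniz rule for the induced differential on the internal hom together with the cancellation of the cross terms $\mp i_1p_1\otimes 1$. The paper itself gives no proof of this lemma, deferring to the cited Lemma 3.1 of Eilenberg--Mac Lane, so there is nothing to compare against; your closing remark about the alternative homotopy $h_1\otimes i_2p_2+1\otimes h_2$ matches exactly the discussion in the remark following the lemma in the paper.
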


\begin{remark}\label{rem:SDR_tensor_products}
  The tensor product of SDRs is compatible with the associativity constraint so it can be unambiguously iterated. If we have a sequence of SDRs, $1\leq j\leq n$,
  \begin{center}
    \SDR{A_j}{B_j}[i_j][p_j][h_j],
  \end{center}
  their tensor product is the SDR
  \begin{center}
    \SDR[huge]{A_1\otimes\cdots\otimes A_n}{B_1\otimes\cdots\otimes B_n}[i_1\otimes\cdots\otimes i_n][p_1\otimes\cdots\otimes p_n][h^\otimes_{1,\dots,n}],\qquad
    $\displaystyle h^\otimes_{1,\dots,n}=\sum_{j=1}^ni_1p_1\otimes\cdots\otimes i_{j-1}p_{j-1}\otimes h_j\otimes 1^{\otimes^{n-j}}$,
  \end{center}
  compare \cite[(1) in p.~318]{lambe_1993_resolutions_which_split}.
  
  However, the tensor product is not compatible with the symmetry constraint. In order to explain why and to solve this issue, we introduce some notation. Any permutation $\sigma\in\sym[n]$ gives rise to an isomorphism between $n$-fold tensor products of complexes defined by
  \begin{align*}
    \sigma\colon X_1\otimes\cdots\otimes X_n & \longrightarrow  X_{\sigma^{-1}(1)}\otimes\cdots\otimes X_{\sigma^{-1}(n)},  \\
    x_1\otimes\cdots\otimes x_n              & \;\mapsto\;  \pm x_{\sigma^{-1}(1)}\otimes\cdots\otimes x_{\sigma^{-1}(n)}.
  \end{align*}
  Here, the sign is given by the Koszul rule.
  The composition of these isomorphisms coincides with the product in $\sym[n]$. The symmetry constraint is $(1\;2)\colon X_1\otimes X_2\to X_2\otimes X_1$, and
  \[h^\otimes_{1,2}\neq(1\;2) h^\otimes_{2,1}(1\;2)=h_1\otimes i_2p_2+ 1\otimes h_2.\]
  This map would be an alternative choice for the homotopy in the tensor product of SDRs (\Cref{def:SDR_tensor_products}), it is used in e.g.~\cite[Lemma 2.5]{lambe_stasheff_1987_applications_perturbation_theory}. However, there is a third choice, which is compatible with the symmetry constraint: the average of the previous two. This is generalized by the following straightforward consequence of \Cref{def:SDR_tensor_products}, which uses the fact that, for each $\sigma\in\sym[n]$, the map $\sigma h^\otimes_{\sigma(1),\dots,\sigma(n)}\sigma^{-1}$ is an alternative homotopy for the $n$-fold tensor product.
\end{remark}

\begin{lemma}\label{def:SDR_symmetric_n_tensor_product}
  Let $n\geq 2$. Given $n$ SDRs
  \begin{center}
    \SDR{A_j}{B_j}[i_j][p_j][h_j],\qquad $1\leq j\leq n$,
  \end{center}
  we have a new SDR
  \begin{center}
    \SDR[huge]{A_1\otimes\cdots\otimes A_n}{B_1\otimes\cdots\otimes B_n}[i_1\otimes\cdots\otimes i_n][p_1\otimes\cdots\otimes p_n][h^\otimes_s], $\displaystyle h^\otimes_s=\frac{1}{n!}\sum_{\sigma\in\sym[n]}\sigma h^\otimes_{\sigma(1),\dots,\sigma(n)}\sigma^{-1}$,
  \end{center}
  that we refer to as their \emph{symmetric tensor product}.
\end{lemma}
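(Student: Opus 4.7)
My plan is to reduce the claim to \Cref{def:SDR_tensor_products} by combining two observations: conjugation by a symmetric group isomorphism turns a valid tensor-product homotopy into another valid homotopy, and the set of homotopies between two fixed chain maps is closed under convex combinations. First I would dispatch the easy part: $(p_1\otimes\cdots\otimes p_n)(i_1\otimes\cdots\otimes i_n) = 1$ follows at once from $p_j i_j = 1$ for each $j$, so the only nontrivial point is the homotopy identity $dh^\otimes_s + h^\otimes_s d = 1 - (i_1 p_1)\otimes\cdots\otimes(i_n p_n)$.

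Next, I would fix $\sigma\in\sym[n]$ and apply \Cref{def:SDR_tensor_products} to the reordered sequence of SDRs $A_{\sigma(j)}\leftrightarrows B_{\sigma(j)}$ to obtain a valid homotopy $h^\otimes_{\sigma(1),\dots,\sigma(n)}$ on $A_{\sigma(1)}\otimes\cdots\otimes A_{\sigma(n)}$ satisfying
\[dh^\otimes_{\sigma(1),\dots,\sigma(n)} + h^\otimes_{\sigma(1),\dots,\sigma(n)} d = 1 - (i_{\sigma(1)}p_{\sigma(1)})\otimes\cdots\otimes(i_{\sigma(n)}p_{\sigma(n)}).\]
Because $\sigma$ and $\sigma^{-1}$ are chain isomorphisms, conjugation commutes with the bracket $[d,-]$. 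Writing $H_\sigma = \sigma h^\otimes_{\sigma(1),\dots,\sigma(n)}\sigma^{-1}$, this gives $dH_\sigma + H_\sigma d = 1 - \sigma\bigl(\bigotimes_k i_{\sigma(k)}p_{\sigma(k)}\bigr)\sigma^{-1}$. The routine identity $\sigma\bigl(\bigotimes_k f_{\sigma(k)}\bigr)\sigma^{-1} = \bigotimes_k f_k$ for degree $0$ operators $f_j$ then simplifies the right-hand side to $1 - (i_1 p_1)\otimes\cdots\otimes(i_n p_n)$, so every $H_\sigma$ is an admissible homotopy for the $n$-fold tensor product SDR.

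Finally, I would average: since $[d,-]$ is linear and each $H_\sigma$ satisfies the same homotopy relation, any convex combination lies in the same affine fibre, and in particular $h^\otimes_s = \frac{1}{n!}\sum_{\sigma\in\sym[n]} H_\sigma$ does. The only point demanding vigilance is the Koszul sign bookkeeping in the conjugation identity $\sigma\bigl(\bigotimes_k f_{\sigma(k)}\bigr)\sigma^{-1} = \bigotimes_k f_k$, but because the relevant operators $i_j p_j$ have degree zero this reduces to a plain permutation of tensor factors and presents no real obstacle. The whole argument is therefore a short formal verification, as the remark preceding the statement already anticipates.
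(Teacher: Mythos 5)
Your proposal is correct and follows exactly the route the paper intends: the remark preceding the lemma already states that each conjugate $\sigma h^\otimes_{\sigma(1),\dots,\sigma(n)}\sigma^{-1}$ is an alternative homotopy for the $n$-fold tensor product, and the lemma is presented as a straightforward consequence obtained by averaging these. Your verification of the conjugation identity (using that the $i_jp_j$ have degree $0$) and of the closure of the homotopy condition under convex combinations fills in precisely the details the paper leaves implicit.
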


\begin{remark}\label{rem:SDR_symmetric_n_tensor_product}
  The symmetric $n$-fold tensor product is obviously compatible with the symmetry constraint, i.e.~for any $\tau\in\sym[n]$ the three possible squares in the following diagram commute 
  \begin{center}
    \begin{tikzcd}[ampersand replacement=\&, column sep=25mm]
      A_1\otimes\cdots\otimes A_n\arrow[r, shift left = .5ex, "i_1\otimes\cdots\otimes i_n"]
      \arrow[<-,d,"\tau"']
      \&
      B_1\otimes\cdots\otimes B_n\arrow[l, shift left = .5ex, "p_1\otimes\cdots\otimes p_n"] \arrow[loop right, "h^\otimes_{s}"]
      \arrow[<-,d,"\tau"]\\
      A_{\tau(1)}\otimes\cdots\otimes A_{\tau(n)}\arrow[r, shift left = .5ex, "i_{\tau(1)}\otimes\cdots\otimes i_{\tau(n)}"]
      \&
      B_{\tau(1)}\otimes\cdots\otimes B_{\tau(n)}\arrow[l, shift left = .5ex, "p_{\tau(1)}\otimes\cdots\otimes p_{\tau(n)}"] \arrow[loop right, "h^\otimes_{s}"]
    \end{tikzcd}
  \end{center}
  One of the squares is actually a cylinder (the one involving the homotopies). The cylinder commutes because
  \[\sum_{\sigma\in\sym[n]}\tau\sigma h^\otimes_{\tau\sigma(1),\dots,\tau\sigma(n)}\sigma^{-1}\tau^{-1}=\sum_{\sigma\in\sym[n]}\sigma h^\otimes_{\sigma(1),\dots,\sigma(n)}\sigma^{-1},\]
  since $\sym[n]$ is a group. 
  However, the symmetric tensor product is not compatible with the associativity constraint, so it cannot be iterated. This is why we need a definition for each $n\geq 2$.
\end{remark}

\section{When DG operad extensions are quasi-isomorphisms}\label{sec:quasi-free}

Operads are monoids in the category $\SMod$ of $\sym$-modules with respect to the well-known circle product. The forgetful functor
\[\Operads\longrightarrow\SMod\]
from operads to $\sym$-modules has a left adjoint, the \emph{free operad} functor
\[\F\colon\SMod\longrightarrow\Operads.\]

\begin{definition}\label{def:free_extension}
  A \emph{free extension} of an operad $\O$ is an operad of the form $\P=\O\amalg\F(C)$ for some $\sym$-module $C$. We also say that $\P$ is \emph{free relative to $\O$} or just \emph{relatively free}.
\end{definition}

\begin{remark}\label{rem:free_extension}
  With the notation in \Cref{def:free_extension}, $\P$ is weighted with $\O$ and $C$ concentrated in weights $0$ and $1$, respectively.
  
  In order to describe the underlying $\sym$-module of a relatively free operad, we need the category $\trees$. Objects are trees with root and leaves whose inner vertices have shape $\cuadrado$ or $\circulo$ and such that:
  \begin{itemize}
    \item two vertices with the same shape cannot be adjacent,
    \item $\cuadrado$-shaped vertices cannot be adjacent to the root or to a leaf.
  \end{itemize}
  Morphisms are tree isomorphisms preserving the root, the set of leaves, and the shape of inner vertices. For any such tree $T$, we choose an ordering of the inner vertices and, for each inner vertex, and ordering of the set of incoming edges. 
  
  We define $(\O,C)(T)$ as the tensor product of: 
  \begin{itemize}
    \item $\O(n)$ for each arity $n$ $\circulo$-shaped vertex,
    \item $C(n)$ for each arity $n$ $\cuadrado$-shaped vertex.
  \end{itemize}
  
  Given a tree isomorphism $f\colon T\to T'$, we have an induced isomorphism 
  \[f_*\colon (\O,C)(T)\longrightarrow(\O,C)(T')\]
  defined as follows. The isomorphism $f$ need not preserve the chosen orderings in the sets of inner vertices of $T$ and $T'$, but it induces a permutation. Similarly, for each inner vertex $v\in T$, $f$ induces a permutation between sets of incoming edges of $v\in T$ and $f(v)\in T'$. The isomorphism $f_*$ is defined by: 
  \begin{itemize}
    \item the permutation of the tensor factors induced by the restriction of $f$ to inner vertices, and
    \item for each inner vertex $v\in T$, the automorphism of $\O(n)$ or $C(n)$, according to whether $v$ is $\circulo$- or $\cuadrado$-shaped, induced by the permutation given by the restriction of $f$ to the sets of incoming edges of $v\in T$ and $f(v)\in T'$.
  \end{itemize}
  This defines a functor to the category of complexes,
  \[(\O,C)\colon\trees\longrightarrow\Ch.\]
  
  We consider the functor to the category $\Fin$ of finite sets and bijections 
  \[\ell\colon\trees\longrightarrow\Fin\]
  which sends each tree $T$ to its set $\ell(T)$ of leaves. Consider also the finite set $\card{n}=\{1,\dots,n\}$ and the comma category $\ell\downarrow\card{n}$, $n\geq 0$. An object of $\ell\downarrow\card{n}$ can be regarded as a tree with $n$ leaves labeled by $\card{n}$. Morphisms are isomorphisms in $\trees$ preserving the labels of the leaves.  
  The symmetric group $\sym[n]$ acts on $\ell\downarrow\card{n}$ by post-composition, i.e.~permuting the leaves of a labeled tree. The $\sym$-module underlying $\P$ is given by
  \begin{equation}\label{P(n)_colim}
    \P(n)=\colim_{\ell\downarrow\card{n}}(\O,C).
  \end{equation}
  The weight of $(\O,C)(T)$ is the number of $\cuadrado$-shaped vertices of $T$.
  
  The infinitesimal composition is defined by morphisms
  \[\circ_i\colon (\O,C)(T)\otimes(\O,C)(T')\longrightarrow(\O,C)(T'')\]
  in the following way. The leaves of the trees $T$ and $T'$ are labeled by $\card{n}$ and $\card{n'}$, respectively, and $1\leq i\leq n$. The tree $T''$ is obtained by grafting the root of $T'$ onto the $i$ leaf of $T$ and then, if necessary, contracting the edge $e$ delimited by $\circulo$-shaped vertices possibly created by grating (this happens always except when $T$ or $T'$ is the tree $|$ with no inner vertices). The root of $T''$ is the root of $T$ and the set of leaves of $T''$ is the union of the sets of leaves of $T$ and $T'$, with the $i$ leaf of $T$ removed. The leaves of $T''$ are labeled by $\card{{n+n'-1}}$ according to the following ordering: 
  \begin{itemize}
    \item The leaves coming from $T$ and $T'$ keep their internal order.
    \item All leaves of $T'$ occupy the place of the $i$ leaf of $T$.
  \end{itemize}
  The previous morphism $\circ_i$ is defined as follows. Assume we need to contract the aforementioned edge $e$ in the grafting. Denote by $v$ and $w$ the bottom and top $\circulo$-shaped vertices of $e$, respectively. If $p$ is the arity of $v$, $q$ is the arity of $w$, and $j$ is the place of $e$ in the ordered set of incoming edges of $v$, then we first apply $\circ_j\colon\O(p)\otimes\O(q)\to\O(p+q-1)$ to the tensor factors of $v$ and $w$. Here $\O(p+q-1)$ is regarded as the tensor factor of the vertex $[e]\in T''$ created by contraction. The remaining vertices of $T''$ are the vertices of $T$ except for $v$ and the vertices of $T'$ except for $w$. Therefore, after applying $\circ_j$ as indicated we obtain a tensor product of the same factors as in $(\O,C)(T'')$, possibly in the wrong order (we obtain them in an order determined by the chosen orderings on $T$ and $T'$).  
  We finally proceed like in the definition of $f_*$ above, permuting tensor factors and applying symmetric group actions according to the chosen orderings of inner vertices and incoming edges in $T$, $T'$ and $T''$. If we do not need to contract any edge, then either $T=|$, so $T''=T'$, or $T'=|$, so $T''=T$. In both cases $(\O,C)(|)=k$ concentrated in degree $0$ and $\circ_i$ is the tensor unit constraint.
\end{remark}

\begin{definition}\label{def:SDR_S-modules_and_operads}
  A \emph{strong deformation retraction of $\sym$-modules}
  \begin{center}
    \SDR{A}{B}[i][p][h]
  \end{center}
  is a sequence of SDRs of complexes (\Cref{def:SDR}), $n\geq 0$,
  \begin{center}
    \SDR{A(n)}{B(n)}[i(n)][p(n)][h(n)]
  \end{center}
  such that $i$ and $p$ are are morphisms in $\SMod$
  and each $h(n)\colon B(n)\to B(n)$ is compatible with the action of $\sym[n]$.
  
  A \emph{strong deformation retraction of operads} is an SDR of $\sym$-modules
  \begin{center}
    \SDR{\O}{\P}[i][p][h]
  \end{center}
  where $\O$ and $\P$ are DG operads and $i$ and $p$ are DG operad morphisms.
\end{definition}

\begin{theorem}\label{thm:operad_SDR}
  Let $\P=\O\amalg\F(C)$ be a relatively free DG operad such that $C$ is equipped with an $\sym$-module SDR
  \begin{center}
    \SDR{0}{C}[][][h]
  \end{center}
  Then, there exists a unique SDR of operads
  \begin{center}
    \SDR{\O}{\P}[i][p][h_C]
  \end{center}
  such that: 
  \begin{enumerate}
    \item\label{it:i} $i\colon \O\to \P$ is the inclusion of the first factor of the coproduct.
    \item $p\colon \P\to \O$ is the retraction which maps positive weight elements to zero.
          \item\label{it:hi} $h_Ci=0$.
          \item\label{it:h} The restriction of $h_C$ to $C$ is $h$.
          \item\label{it:h_composition} If $x$ and $y$ are elements of weight $v$ and $w$ in $\P$, respectively, then
          \[h_C(x\circ_iy)=\frac{v}{v+w}h_C(x)\circ_iy+(-1)^{|x|}\frac{w}{v+w}x\circ_ih_C(y).\]
          Here, if $v=w=0$ we understand that $\frac{0}{0}=1$.
  \end{enumerate}
\end{theorem}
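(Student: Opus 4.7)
The plan is to construct the SDR explicitly using the tree description of relatively free operads from \Cref{rem:free_extension}. The morphisms $i$ and $p$ are forced by the universal property of the coproduct: $i$ is the coproduct inclusion and $p$ is the unique operad morphism that is the identity on $\O$ and zero on $\F(C)$. The real content is the construction and verification of $h_C$.

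I would define $h_C$ tree by tree via \eqref{P(n)_colim}. Equip each $\circulo$-vertex with the trivial SDR on $\O$ (so $h=0$, $ip=1$) and each $\cuadrado$-vertex with the given SDR $0\to C$ with homotopy $h$. On each tree $T\in\trees$ with $w\geq 1$ $\cuadrado$-vertices, let $h_C$ restrict on $(\O,C)(T)$ to the symmetric tensor product homotopy of \Cref{def:SDR_symmetric_n_tensor_product}; on weight-$0$ trees, set $h_C=0$. Because the $\circulo$-vertex SDRs satisfy $h=0$ and $ip=1$, the summands in the symmetric average involving $\circulo$-vertex homotopies vanish, reducing the symmetric tensor product homotopy to the balanced expression
\[
  h^{\mathrm{sym}}_T \;=\; \frac{1}{w}\sum_{c} \eta_c,
\]
where $c$ ranges over the $\cuadrado$-vertices of $T$ and $\eta_c$ inserts $h$ at the $C$-factor attached to $c$, extending by identities on the remaining tensor factors.

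Next I would check this descends to a well-defined $\sym$-module map $h_C\colon\P\to\P$ on the colimit. Tree isomorphisms permute the set of $\cuadrado$-vertices of $T$, and the average $h^{\mathrm{sym}}_T$ is manifestly invariant under such permutations; the same reasoning grants compatibility with the $\sym[n]$-action on leaves, compare \Cref{rem:SDR_symmetric_n_tensor_product}. The conditions $h_Ci=0$ and $h_C|_C=h$ follow at once: $i(\O)$ lies in the weight-zero part where $h_C=0$, and on the tree with a single $\cuadrado$-vertex the sum degenerates to $h$ itself. The SDR identity $dh_C+h_Cd=1-ip$ is verified tree by tree using \Cref{def:SDR_symmetric_n_tensor_product}: on positive-weight trees the vertex-level retraction $ip$ is zero on $(\O,C)(T)$, while on weight-zero trees both sides vanish.

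The essential computation is the infinitesimal composition formula. For $x\in(\O,C)(T)$ of weight $v$ and $y\in(\O,C)(T')$ of weight $w$, the product $x\circ_i y$ sits in $(\O,C)(T'')$ where $T''$ is the grafted tree; any contracted edge is $\circulo$-$\circulo$, so the set of $\cuadrado$-vertices of $T''$ is the disjoint union of those of $T$ and $T'$. Splitting
\[
  h^{\mathrm{sym}}_{T''} \;=\; \frac{1}{v+w}\sum_{c\in T''}\eta_c \;=\; \frac{v}{v+w}\cdot\frac{1}{v}\sum_{c\in T}\eta_c \;+\; \frac{w}{v+w}\cdot\frac{1}{w}\sum_{c\in T'}\eta_c
\]
and applying the result to $x\circ_i y$ produces the stated formula, the sign $(-1)^{|x|}$ arising from commuting the degree $-1$ operator $h_C$ past $x$ via the Koszul rule. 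Uniqueness follows because the stated conditions pin $h_C$ down on $\O$, on $C$, and on all iterated infinitesimal compositions, which exhaust $\P$. I expect the main obstacle to be careful sign bookkeeping in the last formula and verifying that grafting contraction acts as the identity at the level of $h^{\mathrm{sym}}$, both of which rely on the tree convention preventing adjacency of $\cuadrado$-vertices with each other, with leaves, or with the root.
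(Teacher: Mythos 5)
Your proposal is correct and follows essentially the same route as the paper: the paper likewise defines $h_C$ tree by tree as the symmetric tensor product homotopy of \Cref{def:SDR_symmetric_n_tensor_product} with the trivial SDR on $\circulo$-vertices (\Cref{lem:free_extension_SDR}), reduces it to the average $\frac{1}{m}\sum_c\eta_c$ over $\cuadrado$-vertices (\Cref{lem:free_extension_SDR_formula}), and obtains the composition formula and uniqueness exactly as you do. The only point you compress is the counting argument showing that for each permutation $\sigma$ exactly one summand survives (the one applying $h$ at the \emph{first} $\cuadrado$-vertex in the $\sigma$-order, since $ip=0$ at $\cuadrado$-vertices kills the rest), which is what produces the normalization $\frac{1}{w}$.
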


\begin{proof}
  Any element of $\P$ is a sum of iterated compositions of elements in $\O$ and $C$. By \eqref{it:hi}, $h_C$ vanishes on elements of $\O$. On elements of $C$, $h_C$ is given by \eqref{it:h}. The definition of $h_C$ on an iterated composition is now determined by 
  \eqref{it:h_composition}, hence uniqueness follows. Existence is a consequence of \Cref{lem:free_extension_SDR} and \Cref{cor:free_extension_SDR_composition} below.
\end{proof}

\begin{remark}
  The fact that, in \Cref{thm:operad_SDR}, the inclusion of the first factor of the coproduct $\O\hookrightarrow\P$ is a quasi-isomorphism comes as no surprise. Indeed, it is a standard acyclic cofibration in the sense of \cite[\S6.4]{hinich_1997_homological_algebra_homotopy} in the model category structure on $\Operads$ where weak equivalences are quasi-isomorphisms and fibrations are surjections, see \cite[Theorem 6.1.1]{hinich_1997_homological_algebra_homotopy} and \cite[Theorem 3.2]{hinich_2003_erratum_homological_algebra}. The relevant fact is the existence of an operad SDR with the claimed properties.
\end{remark}

\begin{remark}
  The convention $\frac{0}{0}=1$ in \Cref{thm:operad_SDR} \eqref{it:h_composition} is irrelevant (we could put any value) since $h_C(x\circ_iy)=h_C(x)=h_C(y)=0$ in case $x$ and $y$ have weight $0$ because this means $x,y\in\O$.
\end{remark}

In the following \namecref{lem:free_extension_SDR}, we construct an operad SDR like the one claimed in \Cref{thm:operad_SDR}, except that we do not check \eqref{it:h_composition}. 

\begin{lemma}\label{lem:free_extension_SDR}
  In the setting of \Cref{def:free_extension} and \Cref{rem:free_extension}, suppose that the $\sym$-module $C$ is equipped with an SDR
  \begin{center}
    \SDR{0}{C}[][][h]
  \end{center}
  If we equip $\O$ with the trivial SDR
  \begin{center}
    \SDR{\O}{\O}[1][1][0]
  \end{center}
  and each $(\O,C)(T)$ with the symmetric tensor product SDR (\Cref{def:SDR_symmetric_n_tensor_product}), then this and \eqref{P(n)_colim} induce an SDR of operads
  \begin{center}
    \SDR{\O}{\P}[i][p][h_C]
  \end{center}
  where $i\colon \O\to \P$ is the inclusion of the first factor of the coproduct,  $p\colon \P\to \O$ is the retraction which maps positive weight elements to zero, and $h_Ci=0$.
\end{lemma}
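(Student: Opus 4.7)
The strategy is to construct the required SDR tree by tree and then pass to the colimit \eqref{P(n)_colim}. For each tree $T\in\trees$, the complex $(\O,C)(T)$ is an ordered tensor product of one $\O(p)$ per arity-$p$ $\circulo$-vertex and one $C(q)$ per arity-$q$ $\cuadrado$-vertex. Equip each $\O(p)$ factor with its trivial SDR and each $C(q)$ factor with the given SDR having homotopy $h$; applying \Cref{def:SDR_symmetric_n_tensor_product} produces an SDR whose large side is $(\O,C)(T)$ and whose small side is the tensor product of the small sides of the factors. A single $\cuadrado$-vertex kills this tensor product, so the small side is zero unless $T$ has no $\cuadrado$-vertex. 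Because two $\circulo$-vertices cannot be adjacent in $\trees$, such a tree is either the trivial tree $|$ (giving small side $k$) or has a unique $\circulo$-vertex of some arity $p$ (giving small side $\O(p)$).

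Next I verify that these tree-wise SDRs are natural in $T$: any tree isomorphism $f\colon T\to T'$ induces, following \Cref{rem:free_extension}, a map $f_*$ that factors as a permutation of the ordered tensor factors (coming from the permutation of inner vertices) composed with internal actions on each $\O(p)$ or $C(q)$ factor (coming from the permutations of incoming edges at each vertex). By \Cref{rem:SDR_symmetric_n_tensor_product}, the symmetric tensor product SDR is compatible with the factor permutations, and the internal actions commute with the SDR data because each $\O(p)$ carries the trivial SDR and because $h$ on $C$ is, by hypothesis, a morphism of $\sym$-modules. Hence $f_*$ intertwines the tree-wise SDR data on $(\O,C)(T)$ with that on $(\O,C)(T')$.

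By this naturality the tree-wise SDRs assemble into an SDR $(i(n),p(n),h_C(n))$ on $\P(n)=\colim_{\ell\downarrow\card{n}}(\O,C)$. These maps are $\sym[n]$-equivariant because the $\sym[n]$-action on $\P(n)$ is induced from the morphisms in $\ell\downarrow\card{n}$ that permute leaf labels, so equivariance is a special case of the naturality already established. Therefore $(i,p,h_C)$ is an SDR of $\sym$-modules from $\O$ to $\P$.

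It remains to identify the three maps. The small sides computed above---the tree $|$ contributing $k$ and each arity-$n$ single-$\circulo$-vertex tree contributing $\O(n)$---assemble under the colimit into exactly the inclusion of the first factor $\O\hookrightarrow\O\amalg\F(C)=\P$, which is an operad morphism by the coproduct universal property. The retraction $p$ annihilates the contribution of every tree containing a $\cuadrado$-vertex (a factor is sent to $0$) and is the identity on the rest, so it is the weight-killing retraction, again an operad morphism. For $h_Ci=0$: the image of $i$ lies in the colimit contributions of trees with no $\cuadrado$-vertex, where every tensor factor carries the trivial SDR, and hence by the explicit formula in \Cref{def:SDR_symmetric_n_tensor_product} the homotopy vanishes identically. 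The main technical obstacle I anticipate is the naturality check, since $f_*$ twists simultaneously by a tensor-factor permutation and by internal $\sym$-actions, and both interactions must be threaded carefully through the averaged formula defining $h^\otimes_s$.
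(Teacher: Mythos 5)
Your proposal is correct and follows essentially the same route as the paper's proof: compute the small side tree by tree (trivial for trees with a $\cuadrado$-vertex, $\O(n)$ or $k$ otherwise, whence $h_Ci=0$), and verify naturality of the tree-wise symmetric tensor product SDRs under tree isomorphisms $f_*$ using the compatibility of the averaged homotopy with permutations of factors together with the $\sym$-equivariance of $h$, before passing to the colimit. Your write-up is somewhat more explicit than the paper's (e.g.\ in treating the trivial tree $|$ and the $\sym[n]$-equivariance of the assembled maps), but the underlying argument is the same.
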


\begin{proof}
  If $T$ has no $\cuadrado$-shaped inner vertices, then $T$ must be a corolla with a single $\circulo$-shaped inner vertex. In this case $(\O,C)(T)=\O(n)$ and the SDR is the second one in the statement, hence $h_Ci=0$. Otherwise, the SDR is of the form
  \begin{center}
    \SDR{0}{(\O,C)(T)}[][][h_T]
  \end{center}
  We just have to check that, if $f\colon T\to T'$ is an isomorphism in $\ell\downarrow\card{n}$ between trees with at least one $\cuadrado$-shaped inner vertex, then the square
  \begin{center}
    \begin{tikzcd}
      (\O,C)(T)\ar[r,"h_T"]&(\O,C)(T)\\
      (\O,C)(T')\ar[<-,u,"f_*"]\ar[r,"h_{T'}"]&(\O,C)(T')\ar[<-,u,"f_*"']
    \end{tikzcd}
  \end{center}
  commutes. This follows from the definition of $f_*$ and from the facts that each $h(n)$ is compatible with the action of $\sym[n]$ and each $(\O,C)(T)$ is a symmetric tensor product, see \Cref{rem:SDR_symmetric_n_tensor_product}.
\end{proof}

We obtain an explicit formula for the cochain homotopy $h_C$ of \Cref{lem:free_extension_SDR} in the following \namecref{lem:free_extension_SDR_formula}.

\begin{lemma}\label{lem:free_extension_SDR_formula}
  With the notation in \Cref{lem:free_extension_SDR} and its proof, given a labeled tree $T$ in $\ell\downarrow\card{n}$ with some $\cuadrado$-shaped vertex, the homotopy $h_T$ contracting $(\O,C)(T)$ consists of applying $h(p)$ to a single $\cuadrado$-shaped vertex of $T$ at a time ($p$ is the arity of the vertex, and this involves a Koszul sign since $h$ has degree $-1$), summing all of them (there are $m$, where $m$ is the number of $\cuadrado$-shaped vertices of $T$) and dividing by $m$. 
\end{lemma}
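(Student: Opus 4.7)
The plan is to unfold the definition of the symmetric tensor product SDR (\Cref{def:SDR_symmetric_n_tensor_product}) applied to the factors $(\O,C)(T)$ and read off which summands survive. Label the inner vertices of $T$ by $1,\dots,n$, and write the corresponding SDR on the $k$-th factor as $(i_k,p_k,h_k)$. By \Cref{lem:free_extension_SDR}, for a $\circulo$-shaped vertex the factor is $\O(p)$ with the trivial SDR, so $i_kp_k=1$ and $h_k=0$; for a $\cuadrado$-shaped vertex the factor is $C(p)$ with the contracting SDR, so $i_k=p_k=0$ (hence $i_kp_k=0$) and $h_k=h(p)$.

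Next I would inspect the generic summand of the ordinary tensor product homotopy recalled in \Cref{rem:SDR_tensor_products},
\[
h^\otimes_{\sigma(1),\dots,\sigma(n)}=\sum_{j=1}^{n} i_{\sigma(1)}p_{\sigma(1)}\otimes\cdots\otimes i_{\sigma(j-1)}p_{\sigma(j-1)}\otimes h_{\sigma(j)}\otimes 1^{\otimes n-j}.
\]
By the previous paragraph, the $j$-th term is nonzero if and only if $\sigma(j)$ is a $\cuadrado$-vertex and $\sigma(1),\dots,\sigma(j-1)$ are all $\circulo$-vertices, i.e.~$\sigma(j)$ is the \emph{first} $\cuadrado$-vertex appearing in the sequence $\sigma(1),\dots,\sigma(n)$. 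When this happens, the summand applies the identity to every factor except $\sigma(j)$, where it applies $h$. Conjugating by $\sigma$ as in the symmetric tensor product formula reorders these tensor factors back to the canonical ordering of vertices of $T$, producing (up to the Koszul signs built into the action of $\sigma$) the operator $h_v$ which applies $h$ to the $\cuadrado$-vertex $v=\sigma(j)$ and the identity to every other factor.

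It then remains to count, for each $\cuadrado$-vertex $v$ of $T$, the number of $\sigma\in\sym[n]$ for which $v$ is the first $\cuadrado$-vertex to appear. If $T$ has $m$ such vertices, this count is $n!/m$ by symmetry among the $\cuadrado$-vertices (equivalently, the probability that $v$ precedes the other $m-1$ of them in a uniformly random ordering of $\{1,\dots,n\}$ is $1/m$). Therefore
\[
h_T=\frac{1}{n!}\sum_{\sigma\in\sym[n]}\sigma h^\otimes_{\sigma(1),\dots,\sigma(n)}\sigma^{-1}=\frac{1}{n!}\sum_{v}\frac{n!}{m}\,h_v=\frac{1}{m}\sum_{v}h_v,
\]
which is exactly the claimed formula. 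The only subtlety I foresee is the bookkeeping of Koszul signs, but these are automatically incorporated once we identify $h_v$ as ``apply $h$ to the $v$-th tensor factor and the identity elsewhere'', since the signs are dictated by the Koszul rule for the symmetric monoidal action used throughout \Cref{sec:SDRs}; the trivial case with no $\cuadrado$-vertex is handled separately and already settled in the proof of \Cref{lem:free_extension_SDR}.
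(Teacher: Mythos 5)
Your argument is correct and follows essentially the same route as the paper's proof: unfold the symmetric tensor product homotopy, observe that in each $h^\otimes_{\sigma(1),\dots,\sigma(n)}$ only the summand applying $h$ to the first $\cuadrado$-shaped vertex in the $\sigma$-order survives, and count how many $\sigma$ put a given $\cuadrado$-vertex first. The only (cosmetic) difference is in that count: you get $n!/m$ by a symmetry argument, whereas the paper enumerates an explicit subgroup of order $(m+r)!/m$; both yield the factor $1/m$.
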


\begin{proof}
  The homotopy $h_T$ is a symmetric tensor product of the first two SDRs in the statement of \Cref{lem:free_extension_SDR}. The first one has $0$ on the left hence, with the usual notation, $ip=0$ on it. The second one has identity horizontal arrows and trivial homotopy.   
  Therefore, in the formula for $h_s^\otimes$ in \Cref{def:SDR_symmetric_n_tensor_product}, the only surviving summand in $\sigma h^\otimes_{\sigma(1),\dots,\sigma(n)}\sigma^{-1}$ is the one where $h$ is applied to the first $\cuadrado$-shaped vertex, with respect to the ordering of the inner vertices of $T$ induced by the chosen ordering and the permutation $\sigma$, and nothing is done on the rest of inner vertices. 
  
  A given $\cuadrado$-shaped vertex can be `the first', according to the previous criterion, for different permutations $\sigma$. Here $\sigma\in\sym[m+r]$ were $m$, as declared in the statement, is the number of $\cuadrado$-shaped vertices of $T$, and $r$ is the number of $\circulo$-shaped vertices. Any two such permutations placing first the same $\cuadrado$-shaped vertex 
  differ by the product of three different kinds of permutations:
  \begin{itemize}
    \item A permutation of the $\circulo$-shaped vertices, which we identify with an element of $\sym[r]$.
    \item A permutation of the $\cuadrado$-shaped vertices fixing the given one, which we identify with an element of $\sym[m-1]$.
    \item A shuffle permutation of the $\cuadrado$- and $\circulo$-shaped vertices, which we identify with an element of $\sym[m+r]/\sym[m]\times\sym[r]$.
  \end{itemize}
  The converse is also evident, any two permutations differing in a product of these three kinds of permutations place first the same $\cuadrado$-shaped vertex. The subgroup $G\subset\sym[m+r]$ they generate contains commuting copies of $\sym[m-1]$ and $\sym[r]$ with trivial intersection and $G/\sym[m-1]\times\sym[r]\cong \sym[m+r]/\sym[m]\times\sym[r]$ as sets, so
  \[|G|=\lvert\sym[m-1]\rvert\lvert\sym[r]\rvert\left\lvert\frac{\sym[m+r]}{\sym[m]\times\sym[r]}\right\rvert=(m-1)!r!\frac{(m+r)!}{m!r!}=\frac{(m+r)!}{m}.\]
  Therefore, in the formula of $h_s^\otimes$, the summand consisting of applying the homotopy to a given $\cuadrado$-shaped vertex appears as many times. Since $h_T$ consists of adding all of them up and dividing by $(m+r)!$, the result follows.
\end{proof}

As an immediate consequence, we now derive the formula for the value of $h_C$ on a composite element. This will conclude the proof of \Cref{thm:operad_SDR}.

\begin{corollary}\label{cor:free_extension_SDR_composition}
  The operad SDR in \Cref{lem:free_extension_SDR} satisfies the formula in \Cref{thm:operad_SDR} \eqref{it:h_composition}.
\end{corollary}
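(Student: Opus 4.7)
My plan is to apply the explicit formula for $h_T$ given in \Cref{lem:free_extension_SDR_formula} term by term on the grafted tree, and then identify the resulting sum as the right-hand side of \Cref{thm:operad_SDR} \eqref{it:h_composition}.

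First I would pick representatives $x\in(\O,C)(T)$ and $y\in(\O,C)(T')$, where the labeled trees $T$ and $T'$ have exactly $v$ and $w$ $\cuadrado$-shaped inner vertices, respectively. The composite $x\circ_i y$ lives in $(\O,C)(T'')$, where $T''$ is the tree obtained by grafting $T'$ onto the $i$-th leaf of $T$ and contracting the possibly created $\circulo$--$\circulo$ edge, as described in \Cref{rem:free_extension}. The essential observation is that the contraction only merges two $\circulo$-shaped vertices; the set of $\cuadrado$-shaped vertices of $T''$ is the disjoint union of those of $T$ and those of $T'$, so $T''$ has exactly $v+w$ of them.

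By \Cref{lem:free_extension_SDR_formula}, the value $h_C(x\circ_i y)=h_{T''}(x\circ_i y)$ is $\frac{1}{v+w}$ times the sum, over the $v+w$ $\cuadrado$-shaped inner vertices of $T''$, of the element obtained by applying $h$ to the tensor factor sitting at that vertex (with the Koszul sign). I would split this sum according to whether the vertex comes from $T$ or from $T'$. The $v$ summands coming from vertices of $T$ act only on the tensor factors belonging to $x$; they assemble precisely into $v\cdot h_C(x)\circ_i y$, since by \Cref{lem:free_extension_SDR_formula} applied to $T$, the quantity $v\cdot h_C(x)$ is exactly the sum over $\cuadrado$-shaped vertices of $T$ of the corresponding one-vertex applications of $h$, and the infinitesimal composition with $y$ is linear on the $x$-slot. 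The $w$ summands coming from $T'$ act only on the $y$-factors, but to apply $h$ to a factor of $y$ one must first pass $h$ (degree $-1$) through $x$ (degree $|x|$), producing a Koszul sign $(-1)^{|x|}$. These summands then assemble into $(-1)^{|x|} w\cdot x\circ_i h_C(y)$.

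Adding the two contributions and dividing by $v+w$ yields
\[
h_C(x\circ_i y)=\frac{v}{v+w}h_C(x)\circ_i y+(-1)^{|x|}\frac{w}{v+w}x\circ_i h_C(y),
\]
which is the required identity. The edge cases $v=0$ or $w=0$ (i.e.\ $x\in\O$ or $y\in\O$) hold trivially with the convention $\frac{0}{0}=1$, since then the corresponding $h_C$ vanishes and the corresponding sum of one-vertex applications is empty. The only real point to watch is the Koszul sign when moving $h$ across $x$; this is built into the definition of $f_*$ in \Cref{rem:free_extension} and into the symmetric tensor product formula of \Cref{def:SDR_symmetric_n_tensor_product}, so it will be the one careful bookkeeping step but presents no conceptual obstacle.
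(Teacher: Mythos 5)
Your proposal is correct and follows exactly the route the paper intends: the paper states this corollary without proof as an ``immediate consequence'' of \Cref{lem:free_extension_SDR_formula}, and your argument---splitting the $v+w$ $\cuadrado$-shaped vertices of the grafted tree into the $v$ coming from $T$ and the $w$ coming from $T'$, and tracking the Koszul sign $(-1)^{|x|}$ for the latter---is precisely the intended justification, spelled out in full.
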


We finally consider non-free extensions.

\begin{theorem}\label{thm:ideal}
  In the setting of \Cref{thm:operad_SDR}, if $I\subset \P$ is a DG operadic ideal generated by a set $S$ of weight-homogeneous elements of positive weight such that $h_C(S)\subset I$, then the composite 
  \[\O\stackrel{i}{\hookrightarrow}\P\twoheadrightarrow\P/I\]
  is a quasi-isomorphism $\O\to\P/I$.
\end{theorem}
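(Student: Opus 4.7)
The plan is to upgrade the operad SDR $(i,p,h_C)$ of \Cref{thm:operad_SDR} to an SDR of complexes between $\O$ and the quotient $\P/I$; the quasi-isomorphism will then follow since any SDR is in particular a quasi-isomorphism.

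First, I would observe that $p(I)=0$. The generators in $S$ have positive weight, while infinitesimal compositions add weights and the differential preserves weight, so $I$ is contained in the strictly positive-weight part of $\P$, on which $p$ vanishes by \Cref{thm:operad_SDR}. Consequently $p$ descends to $\bar p\colon\P/I\to\O$, and the composite $\bar i\colon\O\hookrightarrow\P\twoheadrightarrow\P/I$ satisfies $\bar p\,\bar i=1_\O$.

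The main step is to show that $h_C(I)\subset I$, so that $h_C$ descends to $\bar h_C\colon\P/I\to\P/I$. I would present $I$ as a union $\bigcup_{n\geq 0}I_n$, where $I_0$ is the $\sym$-submodule generated by $S$ and $I_{n+1}$ is obtained from $I_n$ by adjoining all infinitesimal compositions $a\circ_i y$ and $y\circ_i a$ with $y\in I_n$ and $a\in\P$, together with all differentials $dy$ for $y\in I_n$. The base case $h_C(I_0)\subset I$ reduces to the hypothesis $h_C(S)\subset I$ via the $\sym$-equivariance of $h_C$. For the inductive step, composition elements are handled by formula \eqref{it:h_composition} of \Cref{thm:operad_SDR}: after decomposing into weight-homogeneous summands, $h_C(a\circ_i y)$ is a linear combination of $h_C(a)\circ_i y$ and $a\circ_i h_C(y)$, and both lie in $I$ since $y\in I_n\subset I$ and $h_C(y)\in I$ by the inductive hypothesis, using that $I$ is an operadic ideal; the symmetric case $y\circ_i a$ is analogous. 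Differentials are handled by rewriting the SDR identity as
\[h_C(dy)=(1-ip)y-dh_C(y)=y-dh_C(y),\]
using $p(I)=0$; both $y$ and $dh_C(y)$ belong to $I$ by the inductive hypothesis and closure of $I$ under $d$.

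Once $h_C(I)\subset I$ is established, the induced map $\bar h_C$ inherits the identity $d\bar h_C+\bar h_C d=1-\bar i\,\bar p$, so $(\bar i,\bar p,\bar h_C)$ is an SDR of complexes between $\O$ and $\P/I$; in particular $\bar i$ is a quasi-isomorphism, which is the desired conclusion. The delicate point is the differential clause of the induction, where the SDR homotopy identity must be combined with the vanishing of $ip$ on positive-weight elements to avoid circularity; the composition clause is a direct unwinding of \eqref{it:h_composition}.
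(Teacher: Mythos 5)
Your proof is correct and follows essentially the same route as the paper: both arguments reduce to the two facts $p(I)=0$ and $h_C(I)\subset I$, and establish the latter by induction, using formula \eqref{it:h_composition} of \Cref{thm:operad_SDR} for compositions and the homotopy identity combined with $p(I)=0$ for differentials. The only cosmetic differences are that the paper concludes by showing $I$ is acyclic (so that $\P\twoheadrightarrow\P/I$ is a quasi-isomorphism) instead of descending the SDR to the quotient, and that it absorbs the differential case into the base of the induction by observing that $I$ is generated by $S\cup d(S)$ as a graded operadic ideal.
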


\begin{proof}
  The inclusion $i\colon\O\hookrightarrow\P$ is a quasi-isomorphism by \Cref{thm:operad_SDR}. 
  We will show that the operad SDR in \Cref{thm:operad_SDR} restricts to an $\sym$-module SDR
  \begin{center}
    \SDR{0}{I}[][][h_C]
  \end{center}
  Hence $I$ has trivial cohomology, so the natural projection $\P\twoheadrightarrow\P/I$ is also a quasi-isomorphism, and the result follows.
  
  By assumption, $I$ is concentrated in positive weight. Hence, with the notation in \Cref{thm:operad_SDR}, $p(I)=0$. Therefore, it is only left to check that $h_C(I)\subset I$. 
  
  Let $S=\amalg_{w\geq 1}S^{(w)}$ be the splitting as a disjoint union of weight-homogeneous elements of positive weight. Since $I$ is generated by $S$ as a DG operadic ideal, it is generated by $S\cup d(S)$ as a graded operadic ideal, i.e.~each element of $I$ can be written as a sum of iterated compositions of an element in $S\cup h(S)$ and some other weight-homogeneous elements of $\P$. We now prove that $h_C$ maps such an iterated composition into $I$, by induction on the composition length.
  
  Let us start with a composition of length $1$, i.e.~$x\in S\cup d(S)$. If $x\in S$ then $h(x)\in I$ by hypothesis. If $x\in d(S^{(w)})$, $w\geq 1$, then $x=d(z)$ for some $z\in S^{(w)}$, hence by the cochain homotopy equation
  \[h_C(x)=h_C(d(z))=z-ip(z)-dh_C(z).\]
  Here $z\in S\subset I$, $p(z)=0$ since $w\geq 1$, and $h_C(z)\in I$ by hypothesis, so $dh_C(z)\in I$ too since $I$ is a DG operadic ideal. As a consequence, $h_C(x)\in I$.
  
  Assume the claim is true for iterated compositions $x\in I$ up to a certain length. We have to prove that $x\circ_iy$ and $y\circ_ix$ also satisfy the claim, for any weight-homogeneous $y\in\P$. Let $v$ and $w$ be the weights of $x$ and $y$, respectively. In particular $v\geq1$ since $x\in I$. 
  We apply \Cref{thm:operad_SDR} \eqref{it:h_composition},
  \[h_C(x\circ_iy)=\frac{v}{v+w}h_C(x)\circ_iy+(-1)^{|x|}\frac{w}{v+w}x\circ_ih_C(y).\]
  Here, $x\circ_ih_C(y)\in I$ since $x\in I$, which is an operadic ideal. Moreover,   
  $h_C(x)\in I$ by induction hypothesis, so $h_C(x)\circ_iy\in I$ too. Hence $h_C(x\circ_iy)\in I$. A similar argument shows that $h_C(y\circ_ix)\in I$. This finishes the proof.
\end{proof}

% \begin{remark}
%   In \Cref{thm:ideal}, $S\subset I$ could also be a weight-graded vector subspace of generators. In the proof, we can replace the the disjoint union $S=\amalg_{w\geq 1}S^{(w)}$ with a direct sum decomposition $S=\bigoplus_{w\geq 1}S^{(w)}$  and the union $S\cup d(S)$ with a sum $S+d(S)$. The rest of the proof is the same.
% \end{remark}

\section{Poisson manifolds}\label{sec:EBV}

We start with a slight variation of a classical notion. 

\begin{definition}\label{def:differential_operator}
  Given a graded commutative algebra $A$ and some $n\geq 1$, a \emph{differential operator} $D\colon A\to A$ of \emph{order} $\leq n$ is a homogeneous graded vector space morphism satisfying the following formula for any $x_1,\dots,x_{n+1}\in A$:
  \[\sum_{\substack{p+q=n+1\\p\geq 1}}(-1)^p\sum_{\sigma\in\shuffle{p}{q}}(-1)^\epsilon D(x_{\sigma(1)}\cdots x_{\sigma(p)})x_{\sigma(p+1)}\cdots x_{\sigma(n+1)}=0.\]
  Here $\shuffle{p}{q}\subset\sym[p+q]=\sym[n+1]$ denotes the set of \emph{$(p,q)$-shuffles}, i.e.~the permutations $\sigma\in\sym[p+q]$ such that $\sigma(1)<\cdots<\sigma(p)$ and $\sigma(p+1)<\cdots<\sigma(p+q)$, and $\epsilon$ is determined by the Koszul sign rule.
\end{definition}

\begin{remark}
  For $n=1$ the differential operator equation in \Cref{def:differential_operator} is the Leibniz rule.
  
  The notion of higher order differential operators goes back to \cite{grothendieck_1967_elements_geometrie_algebrique}. The original definition has different equivalent formulations, see \cite{akman_ionescu_2008_higher_derived_brackets}. \Cref{def:differential_operator} is equivalent to \cite[\S1(7)]{akman_1997_generalizations_batalinvilkovisky_algebras} instead, which is also used in \cite{kravchenko_2000_deformations_batalin_vilkovisky}. If $A$ is unital, \Cref{def:differential_operator} is equivalent to classical differential operators satisfying $D(1)=0$. In all our explicit examples $A$ is unital and $D(1)=0$ for degree reasons. 
  
  See \cite[\S2]{dotsenko_shadrin_vallette_2013_givental_group_action} for extra properties of classical differential operators.
\end{remark}

We now recall the definition of Batalin--Vilkovisky algebras.

\begin{definition}\label{def:BV_algebra}
  A \emph{Batalin--Vilkovisky algebra} or \emph{$BV$-algebra} is a DG commutative algebra $A$ equipped with a differential operator $\Delta\colon A\to A$ of degree $-1$ and order $\leq 2$ satisfying:
  \begin{enumerate}
    \item\label{it:differential_of_Delta} $[d,\Delta]=d\Delta+\Delta d=0$,
    \item\label{it:Delta_square_zero} $\Delta^2=0$.
  \end{enumerate}
  In \eqref{it:differential_of_Delta}, the bracket is the commutator bracket in the endomorphism graded associative algebra $\End{A}$ of the graded vector space underlying $A$. This endomorphism graded algebra is actually a DG algebra with differential $[d,-]$. 
\end{definition}

\begin{proposition}[\cite{koszul_1985_crochet_schoutennijenhuis_cohomologie}]\label{prop:Poisson_BV}
  The de Rham complex $\CdR{M}$ of a Poisson manifold $(M,\pi)$ is a $BV$-algebra with the exterior differential, the exterior product, and the Lie derivative $\Delta=L_\pi$.
\end{proposition}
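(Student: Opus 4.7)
The plan is to verify the four conditions comprising \Cref{def:BV_algebra}. That $(\CdR{M}, d, \wedge)$ is a DG commutative algebra is a standard result in differential geometry, so the content lies in showing that $\Delta = L_\pi$ is a differential operator of degree $-1$ and order $\leq 2$ satisfying the two identities $[d, \Delta] = 0$ and $\Delta^2 = 0$.

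The degree and order of $\Delta$ I would deduce from Cartan's homotopy formula $L_\pi = [i_\pi, d] = i_\pi d - d i_\pi$, the sign coming from the even degree $-2$ of $i_\pi$. Since $\pi$ is a bivector, $i_\pi$ lowers form degree by $2$, and composition with $d$ of degree $+1$ yields $|L_\pi| = -1$. For the order, I would first show that $i_\pi$ is itself a differential operator of order $\leq 2$: this is a local computation, reducing to decomposable bivectors $X \wedge Y$, for which $i_{X \wedge Y}$ factors as a composition of two antiderivations and the shuffle defect in \Cref{def:differential_operator} vanishes identically at order three. Taking the graded commutator of an order-$\leq 2$ operator with the derivation $d$ preserves the order bound, so $\Delta$ is of order $\leq 2$.

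The identity $[d, \Delta] = 0$ is then a two-line computation: from $\Delta = i_\pi d - d i_\pi$ and $d^2 = 0$ one obtains $d \Delta = d i_\pi d$ and $\Delta d = - d i_\pi d$, which sum to zero. The identity $\Delta^2 = 0$ is the crux of the proposition and rests on the classical Cartier--Koszul formula
\[[L_\pi, L_{\pi'}] = L_{[\pi, \pi']}\]
relating graded commutators of Lie derivatives along polyvector fields to the Schouten--Nijenhuis bracket on $\Gamma(\Lambda^* T(M))$. Specializing to $\pi' = \pi$, since $L_\pi$ has odd cohomological degree, the left-hand side equals $2 L_\pi^2$, yielding $2\Delta^2 = L_{[\pi, \pi]}$. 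This vanishes precisely because $(M, \pi)$ is Poisson, i.e.~$[\pi, \pi] = 0$ in $\Gamma(\Lambda^3 T(M))$.

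The main obstacle is not conceptual but rather the careful bookkeeping of signs and local computations required to establish the order-$\leq 2$ property of $i_\pi$ and, more substantially, the Cartier--Koszul commutator formula. Both are classical results going back to \cite{koszul_1985_crochet_schoutennijenhuis_cohomologie} and treated systematically in \cite{cartier_1994_fundamental_techniques_theory} (both already cited in the introduction), so the cleanest proof is to import them as black boxes and thereby reduce the proposition to the Poisson condition $[\pi, \pi] = 0$.
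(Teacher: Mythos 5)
Your proof is correct, but note that the paper does not actually prove this proposition: it is stated as a citation to Koszul, and the genuine verification the paper carries out lives in the neighbouring results \Cref{prop:Poisson_EBV} and \Cref{lem:EBV_is_BV}. There the route is: first show $\CdR{M}$ is an \emph{exact} $BV$-algebra, i.e.\ that $i_\pi$ is an order $\leq 2$ operator with $[i_\pi,[i_\pi,d]]=-i_{[\pi,\pi]}=0$, and then deduce formally that any exact $BV$-algebra is a $BV$-algebra with $\Delta=[i,d]$, the identity $\Delta^2=idid-di^2d+didi=0$ following from $[i,[i,d]]=0$. You instead obtain $\Delta^2=0$ directly from the Lie-derivative commutator formula $[L_\pi,L_\pi]=L_{[\pi,\pi]}$ (up to the sign ambiguity flagged in \Cref{rem:fix_sign}, which is immaterial here since $[\pi,\pi]=0$). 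Both arguments import the same Cartier--Koszul calculus as a black box and both reduce the statement to the Poisson condition; the paper's detour through $i_\pi$ buys the stronger conclusion that $\Delta$ is \emph{exactly} null-homotopic via an order $\leq 2$ homotopy, which is the whole point of the paper, whereas your direct argument is shorter if one only wants the $BV$-structure itself. Your remaining verifications (degree and order of $\Delta$ via $[i_\pi,d]$, the order bound for a commutator with a derivation, and $[d,\Delta]=di_\pi d-di_\pi d=0$) all match the reasoning in the proof of \Cref{lem:EBV_is_BV} and are sound.
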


Recall the following definition from the introduction.

\begin{definition}\label{def:exact_BV_algebra}
  An \emph{exact Batalin--Vilkovisky algebra} or \emph{exact $BV$-algebra} is a DG commutative algebra $A$ equipped with a differential operator $i\colon A\to A$ of degree $-2$ and order $\leq 2$ satisfying
  \begin{equation}\label{eq:i_commutes_with_its_differential}
    [i,[i,d]]=i^2d-2idi+di^2=0,
  \end{equation}
  i.e.~$idi=\frac{i^2d+di^2}{2}$.
\end{definition}

\begin{proposition}\label{prop:Poisson_EBV}
  The de Rham complex $\CdR{M}$ of a Poisson manifold $(M,\pi)$ is an exact $BV$-algebra with the exterior differential, the exterior product, and $i=i_\pi$.
\end{proposition}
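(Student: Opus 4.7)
The plan is to reduce the two defining conditions for an exact $BV$-algebra to classical identities relating contractions, Lie derivatives, and the Schouten--Nijenhuis bracket on polyvector fields \cite{cartier_1994_fundamental_techniques_theory}, as anticipated in the paragraph preceding \Cref{prop:Poissonfactorization} of the introduction.

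The claim that $i_\pi$ has degree $-2$ is immediate, since contraction with a bivector field drops form degree by $2$. For the order bound, I would exploit that $\pi$ is locally a finite sum of decomposable bivectors $X\wedge Y$, and that on each decomposable summand $i_{X\wedge Y}=i_X i_Y$ is the composition of two $\CdR{M}$-derivations, each of which is a differential operator of order $\leq 1$ in the sense of \Cref{def:differential_operator}. The key auxiliary fact is then that the composition of an order $\leq p$ and an order $\leq q$ differential operator in this sense is of order $\leq p+q$; this is a standard shuffle manipulation using the Leibniz rule applied to each factor. In our case $p=q=1$, giving $i_\pi$ of order $\leq 2$.

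For the identity \eqref{eq:i_commutes_with_its_differential}, the essential ingredient is Cartan's magic formula for polyvector fields, $\L{X}=[i_X,d]$, together with the Koszul/Cartan commutation
\[[\L{X},i_Y]=i_{[X,Y]},\]
where on the right hand side $[X,Y]$ denotes the Schouten--Nijenhuis bracket of polyvector fields. Specializing to $X=Y=\pi$ this yields
\[[i_\pi,[i_\pi,d]]=[i_\pi,\L{\pi}]=\pm\, i_{[\pi,\pi]},\]
which vanishes precisely because $(M,\pi)$ is Poisson, i.e.~$[\pi,\pi]=0$.

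The only real obstacles are bookkeeping ones: first, tracking the signs produced by the grading on polyvector fields so that the commutator $[i_\pi,\L{\pi}]$ lands with the convention of \Cref{def:exact_BV_algebra} and not a variant; and second, verifying carefully that the shuffle-style definition of order in \Cref{def:differential_operator} is stable under composition of first-order operators (equivalently, of derivations), which is the global justification that the local decomposition of $\pi$ into decomposable bivectors suffices to conclude the order bound.
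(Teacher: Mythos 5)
Your proposal is correct and follows essentially the same route as the paper: the order bound comes from $\pi$ being a bivector field, and the vanishing of $[i_\pi,[i_\pi,d]]$ is obtained by rewriting it as $[i_\pi,L_\pi]=-[L_\pi,i_\pi]=-i_{[\pi,\pi]}=0$ via the Cartan-type identities from \cite[\S3]{cartier_1994_fundamental_techniques_theory}. The paper simply cites that reference for both facts, whereas you spell out the local decomposition and composition-of-derivations argument for the order bound; the substance is identical.
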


\begin{proof}
  Since $\pi$ is a bivector field, $i_\pi$ is a differential operator of degree $-2$ and order $\leq 2$. Moreover, $[i_\pi,[i_\pi,d]]=[i_\pi,L_\pi]=-[L_\pi,i_\pi]=-i_{[\pi,\pi]}=0$ by the equation $[\pi,\pi]=0$. See \cite[\S3]{cartier_1994_fundamental_techniques_theory}.
\end{proof}

\begin{proposition}\label{lem:EBV_is_BV}
  An exact $BV$-algebra is also a $BV$-algebra with $\Delta=[i,d]=id-di$.
\end{proposition}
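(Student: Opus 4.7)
The plan is to verify the three defining conditions of \Cref{def:BV_algebra} for the operator $\Delta := [i,d] = id - di$ on the underlying DG commutative algebra $A$.

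First I would handle the two easy conditions. The degree of $\Delta$ is $|i|+|d| = -2+1 = -1$, as required, and a direct expansion together with $d^2 = 0$ gives
\[
[d,\Delta] \;=\; d(id-di) + (id-di)d \;=\; did - d^2 i + id^2 - did \;=\; 0,
\]
which is condition \eqref{it:differential_of_Delta}.

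Next I would establish $\Delta^2 = 0$, i.e.~condition \eqref{it:Delta_square_zero}. The strategy is to observe that, since $\Delta$ is of odd degree, $2\Delta^2 = [\Delta,\Delta] = [[i,d],[i,d]]$, and to reduce this via graded Jacobi to the exact $BV$ hypothesis \eqref{eq:i_commutes_with_its_differential}. Concretely, applying the graded Jacobi identity with the two outer factors together with the inner $i$ and $d$ yields
\[
[[i,d],[i,d]] \;=\; [[\Delta, i], d] + (-1)^{|\Delta||i|}[i,[\Delta, d]].
\]
The second summand vanishes because $[\Delta,d] = 0$ by the previous step. For the first, a direct expansion identifies $[\Delta, i] = \Delta i - i\Delta = 2idi - i^2d - di^2 = -[i,[i,d]]$, which is zero by hypothesis. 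Hence $\Delta^2 = 0$.

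The main obstacle will be showing that $\Delta$ is a differential operator of order $\leq 2$ in the sense of \Cref{def:differential_operator}. I would invoke the standard filtration principle for the graded endomorphism algebra of a commutative algebra: if $D_1$ has order $\leq m$ and $D_2$ has order $\leq n$, then the graded commutator $[D_1, D_2]$ has order $\leq m+n-1$, because the top-order derivation-like contributions of $D_1D_2$ and $\pm D_2D_1$ cancel. Applied to $i$ (order $\leq 2$) and to the derivation $d$ (order $\leq 1$), this yields $\Delta = [i,d]$ of order $\leq 2+1-1 = 2$. A self-contained alternative would plug $\Delta$ into the shuffle identity of \Cref{def:differential_operator} for $n=2$ and track the cancellation of the degree-$3$ bulk terms using the Leibniz rule for $d$ and the order-$\leq 2$ identity for $i$; this is mechanical but notationally heavy, so citing the filtration fact in the form used by \cite{akman_1997_generalizations_batalinvilkovisky_algebras,kravchenko_2000_deformations_batalin_vilkovisky} (whose conventions agree with \Cref{def:differential_operator}) is the more economical route.
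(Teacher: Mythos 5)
Your proposal is correct and follows essentially the same route as the paper: check the degree, deduce order $\leq 2$ from the orders of $i$ and $d$, get $[d,\Delta]=0$ from $[d,-]$ squaring to zero, and reduce $\Delta^2=0$ to the hypothesis $[i,[i,d]]=0$. The only cosmetic difference is that you derive $\Delta^2=0$ via the graded Jacobi identity, whereas the paper expands $\Delta^2=idid-di^2d+didi$ directly and substitutes $idi=\tfrac{1}{2}(i^2d+di^2)$; both are one-line verifications from the same inputs.
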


\begin{proof}
  The operator $\Delta=[i,d]$ has degree $-1$ because $i$ has degree $-2$ and $d$ has degree $1$. Moreover, $\Delta$ is a differential operator of order $\leq 2$ because $i$ is a differential operator of order $\leq 2$ and $d$ is a differential operator of order $\leq 1$. 
  \Cref{def:BV_algebra} \eqref{it:differential_of_Delta} $[d,\Delta]=[d,[i,d]]=0$ follows from the fact that $[d,-]$ is the differential of the DG algebra $\End{A}$. \Cref{def:BV_algebra} \eqref{it:Delta_square_zero} $\Delta^2=idid-di^2d+didi=0$ is a consequence of \eqref{eq:i_commutes_with_its_differential}. 
\end{proof}

The following notion is due to \cite[\S1.1]{khoroshkin_markarian_shadrin_2013_hypercommutative_operad_homotopy}, see also \cite[Theorem 2.1]{dotsenko_shadrin_vallette_2015_rham_cohomology_homotopy}. We use the sign convention of the second reference.

\begin{definition}\label{def:BV/D_algebra}
  A \emph{trivialized $BV$-algebra} or \emph{$BV$-algebra with Hodge-to-de-Rham degeneration data} is a $BV$-algebra $A$ equipped with graded vector space morphisms $\phi_n\colon A\to A$, $n\geq 1$, of degree $-2n$, such that the following formula holds in the graded algebra $\End{A}[[z]]$ of formal power series on a degree $0$ indeterminate $z$ over the endomorphism graded algebra of the underlying graded vector space of $A$,
  \[e^{\phi(z)}de^{-\phi(z)}=d+\Delta z,\qquad \phi(z)=\sum_{n=1}^\infty\phi_nz^n.\]
  This a compact way of saying that $[\phi_1,d]=\Delta$ and, for each $n\geq 2$,
  \begin{equation}\label{eq:phi_BV}
    \sum_{p=1}^n\frac{1}{p!}\sum_{\substack{q_1+\cdots+q_p=n\\q_1,\dots,q_p\geq 1}}
    [\phi_{q_1},\dots[\phi_{q_{p-1}},[\phi_{q_p},d]]\dots]=0,
  \end{equation}
  see e.g.~\cite[p.~18]{cartier_1994_fundamental_techniques_theory}.
\end{definition}

\begin{theorem}[{\cite[Theorems 2.1 and 3.7]{dotsenko_shadrin_vallette_2015_rham_cohomology_homotopy}}]\label{thm:Poisson_BV/D}
  The de Rham complex $\CdR{M}$ of a Poisson manifold $(M,\pi)$ is a trivialized  $BV$-algebra with the exterior differential, the exterior product,
  \begin{align*}
    \Delta & =L_\pi,  & 
    \phi_1 & =i_\pi,
  \end{align*}
  and $\phi_n=0$ for $n\geq 2$.
\end{theorem}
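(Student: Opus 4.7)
The plan is to reduce the theorem to a purely algebraic fact: for any exact $BV$-algebra $(A,i)$ (\Cref{def:exact_BV_algebra}), the underlying $BV$-algebra given by \Cref{lem:EBV_is_BV} is a trivialized $BV$-algebra with $\phi_1 = i$ and $\phi_n = 0$ for $n\geq 2$. Since $\CdR{M}$ with $i = i_\pi$ is exact by \Cref{prop:Poisson_EBV}, and the induced operator $\Delta = [i_\pi,d] = \L{\pi}$ matches the classical $BV$-structure of \Cref{prop:Poisson_BV}, the statement reduces to checking the defining exponential identity.

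To verify $e^{\phi(z)} d\, e^{-\phi(z)} = d + \Delta z$ for $\phi(z) = i_\pi z$, I would use the standard expansion
\[
  e^{A}\, d\, e^{-A} \;=\; \sum_{n\geq 0}\frac{1}{n!}\,\mathrm{ad}_{A}^{n}(d)
\]
in $\End{\CdR{M}}[[z]]$ applied to $A = i_\pi z$. Since $i_\pi z$ has even total degree $-2$ and $z$ is central, this gives
\[
  e^{i_\pi z}\, d\, e^{-i_\pi z} \;=\; d + z\,[i_\pi,d] + \frac{z^{2}}{2!}\,[i_\pi,[i_\pi,d]] + \cdots.
\]
The coefficient of $z$ is $[i_\pi,d] = \L{\pi} = \Delta$, which is Cartan's magic formula. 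It then remains to show that the iterated bracket $\mathrm{ad}_{i_\pi}^{n}(d)$ vanishes for every $n\geq 2$.

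This vanishing is forced by the exactness condition $[i_\pi,[i_\pi,d]] = 0$ from \Cref{prop:Poisson_EBV}, itself a consequence of $[\pi,\pi]=0$. The case $n=2$ is literally this identity, and for $n\geq 3$ one iterates:
\[
  \mathrm{ad}_{i_\pi}^{n}(d) \;=\; \mathrm{ad}_{i_\pi}^{n-2}\bigl(\mathrm{ad}_{i_\pi}^{2}(d)\bigr) \;=\; \mathrm{ad}_{i_\pi}^{n-2}(0) \;=\; 0.
\]
Equivalently, in the explicit form \eqref{eq:phi_BV}, with $\phi_n=0$ for $n\geq 2$ the only surviving summand is the one with $q_1 = \cdots = q_p = 1$, hence $p = n$, yielding exactly $\tfrac{1}{n!}\,\mathrm{ad}_{i_\pi}^n(d) = 0$.

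There is no substantial obstacle here, since the heavy lifting, namely Cartan's formula and the identity $[i_\pi,\L{\pi}] = i_{[\pi,\pi]} = 0$, has already been carried out in establishing the exact $BV$-algebra structure. The real content of the theorem is thus the observation that an exact $BV$-algebra automatically gives rise to Hodge-to-de-Rham degeneration data concentrated in weight one.
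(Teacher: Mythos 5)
Your proof is correct and follows essentially the same route the paper takes: it is exactly the content of \Cref{prop:Poisson_EBV} combined with the purely algebraic \Cref{lem:EBV_is_BV/D}, which the paper records as \Cref{cor:Poisson_factors} (the paper formally attributes the theorem itself to the cited reference, but derives the same factorization through the exact $BV$-algebra structure). Your verification of \eqref{eq:phi_BV} via the vanishing of $\mathrm{ad}_{i_\pi}^{n}(d)$ for $n\geq 2$, forced by $[i_\pi,[i_\pi,d]]=0$, matches the paper's argument in the proof of \Cref{lem:EBV_is_BV/D}.
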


\begin{proposition}\label{lem:EBV_is_BV/D}
  An exact $BV$-algebra is also a trivialized $BV$-algebra with $\Delta=[i,d]$, $\phi_1=i$, and $\phi_n=0$ for $n\geq 2$.
\end{proposition}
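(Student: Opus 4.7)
The plan is to unpack the definition of a trivialized $BV$-algebra and verify each requirement, exploiting that $\phi_n=0$ for $n\geq 2$ collapses the general equation \eqref{eq:phi_BV} to a single iterated bracket of $i$'s with $d$.

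First, I would observe that the underlying $BV$-algebra structure is provided by \Cref{lem:EBV_is_BV}, which gives $\Delta=[i,d]$, so the only remaining task is to check the Hodge-to-de-Rham degeneration data conditions. The first of these, $[\phi_1,d]=\Delta$, is immediate from the choices $\phi_1=i$ and $\Delta=[i,d]$.

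Next, I would analyze \eqref{eq:phi_BV} for each $n\geq 2$. Since $\phi_m=0$ for $m\geq 2$, any summand indexed by a composition $(q_1,\dots,q_p)$ of $n$ with some $q_j\geq 2$ vanishes; the only surviving composition is $q_1=\cdots=q_p=1$, which forces $p=n$. Thus \eqref{eq:phi_BV} reduces to
\[
\frac{1}{n!}\,\underbrace{[i,[i,\dots,[i,d]\dots]]}_{n\text{ copies of }i}=0.
\]
For $n=2$ this is exactly the defining relation \eqref{eq:i_commutes_with_its_differential} of an exact $BV$-algebra. For $n\geq 3$, I would argue by induction on $n$: once $[i,[i,d]]=0$ is established, we have
\[
\underbrace{[i,[i,\dots,[i,d]\dots]]}_{n\text{ copies}}=\bigl[i,\underbrace{[i,\dots,[i,d]\dots]}_{n-1\text{ copies}}\bigr]=[i,0]=0,
\]
so all higher equations hold automatically.

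The proof is essentially a direct verification with no substantial obstacle; the only thing worth being careful about is recognizing that the compact exponential identity $e^{iz}de^{-iz}=d+\Delta z$ expands, via the standard conjugation formula $e^Ade^{-A}=\sum_{k\geq 0}\frac{1}{k!}[A,d]_k$, into precisely the iterated-bracket vanishing conditions I just checked. Hence the coefficient of $z^k$ for $k\geq 2$ is zero, the coefficient of $z$ is $[i,d]=\Delta$, and the constant term is $d$, as required.
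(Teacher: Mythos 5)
Your proof is correct and follows essentially the same route as the paper: invoke \Cref{lem:EBV_is_BV} for the underlying $BV$-structure, note that only the composition $q_1=\cdots=q_p=1$ survives in \eqref{eq:phi_BV}, and observe that the resulting iterated brackets $[i,\dots,[i,[i,d]]\dots]$ all vanish because the innermost double bracket does by \eqref{eq:i_commutes_with_its_differential}. The paper states this more tersely, but your inductive justification of the vanishing for $n\geq 3$ is exactly the implicit content of its final sentence.
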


\begin{proof}
  After \Cref{lem:EBV_is_BV}, we only have to check \eqref{eq:phi_BV}. Since the only non-vanishing $\phi_n$ is $\phi_1=i$, this reduces to $[i,d]=\Delta$, which holds by definition, and $[i,\dots,[i,[i,d]]\dots]=0$, which is a consequence of \eqref{eq:i_commutes_with_its_differential}.
\end{proof}

\begin{corollary}\label{cor:Poisson_factors}
  The trivialized $BV$-algebra structure on the de Rham complex $\CdR{M}$ of a Poisson manifold $(M,\pi)$ (\Cref{thm:Poisson_BV/D}) is determined by its exact $BV$-algebra structure (\Cref{prop:Poisson_EBV}) and \Cref{lem:EBV_is_BV/D}.
\end{corollary}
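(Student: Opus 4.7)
The plan is to show directly that the two trivialized $BV$-algebra structures on $\CdR{M}$ --- the one from \Cref{thm:Poisson_BV/D} and the one obtained by composing \Cref{prop:Poisson_EBV} with \Cref{lem:EBV_is_BV/D} --- coincide as decorated data on the underlying DG commutative algebra. Since both structures share the same de Rham complex, the same exterior product, the same exterior differential, and the same vanishing $\phi_n = 0$ for $n \geq 2$, it suffices to identify the operators $\Delta$ and $\phi_1$ in both constructions.

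First I would recall that by \Cref{prop:Poisson_EBV} the exact $BV$-algebra structure on $\CdR{M}$ has $i = i_\pi$, the interior product with the Poisson bivector field. Applying \Cref{lem:EBV_is_BV/D} to this structure produces a trivialized $BV$-algebra with $\phi_1 = i_\pi$, $\phi_n = 0$ for $n\geq 2$, and $\Delta = [i_\pi, d] = i_\pi d - d i_\pi$. The last quantity is precisely the Lie derivative $L_\pi$ by Cartan's homotopy formula, which was already invoked in the introduction and in the proof of \Cref{prop:Poisson_EBV}. Hence $\Delta = L_\pi$ and $\phi_1 = i_\pi$, which are exactly the values prescribed by \Cref{thm:Poisson_BV/D}.

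The main step, which is a direct comparison of formulas rather than an obstacle, is confirming that the Cartan identity $L_\pi = [i_\pi, d]$ aligns the two notions of $\Delta$. Once this identification is in place, the two trivialized $BV$-algebra structures agree term-by-term, so they are literally equal (not merely isomorphic). This shows that the trivialized $BV$-algebra structure on $\CdR{M}$ from \Cref{thm:Poisson_BV/D} factors through the exact $BV$-algebra structure of \Cref{prop:Poisson_EBV} via the general construction of \Cref{lem:EBV_is_BV/D}, as claimed.
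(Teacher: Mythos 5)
Your proposal is correct and matches the argument the paper intends (the corollary is stated without an explicit proof precisely because it is this immediate term-by-term comparison): applying \Cref{lem:EBV_is_BV/D} to the exact $BV$-algebra of \Cref{prop:Poisson_EBV} gives $\phi_1=i_\pi$, $\phi_n=0$ for $n\geq 2$, and $\Delta=[i_\pi,d]=L_\pi$ by Cartan's formula, which is exactly the data of \Cref{thm:Poisson_BV/D}.
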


We now consider the operadic viewpoint. $BV$-algebras are algebras over the following operad.

\begin{definition}\label{def:BV_operad}
  The \emph{Batalin--Vilkovisky operad} $\BV$ is the DG operad generated by $\mu\in\BV(2)^0$ and $\Delta\in\BV(1)^{-1}$ with 
  \[\mu\cdot (1\ 2)=\mu\]
  subject to the relations:
  \begin{enumerate}
    \item\label{it:Leibniz} $d(\mu)=0$.
    \item\label{it:associative} $\mu\circ_1\mu=\mu\circ_2\mu$.
    \item\label{it:BV_cycle} $d(\Delta)=0$.
    \item\label{it:BV_square_zero} $\Delta^2=0$.
    \item\label{it:BV_differential operator} $(\mu^2\circ_1\Delta)\cdot[()+(1\ 2)+(1\ 2\ 3)]+\Delta\mu^2=\mu\circ_1(\Delta\mu)\cdot[()+(2\ 3)+(1\ 3\ 2)]$.
  \end{enumerate}
  This is actually a graded operad because $d=0$ but we present it as a DG operad in order to place it in the convenient category. Relations \eqref{it:Leibniz} and \eqref{it:associative} imply that the suboperad generated by $\mu$ is the commutative operad $\C\subset\BV$, \eqref{it:BV_cycle} and \eqref{it:BV_square_zero} correspond to \Cref{def:BV_algebra} \eqref{it:differential_of_Delta} and \eqref{it:Delta_square_zero}, and \eqref{it:BV_differential operator} is \Cref{def:differential_operator} for $n=2$.
\end{definition}

In the following definition we present the DG operad whose algebras are exact Batalin--Vilkovisky algebras.

\begin{definition}\label{def:EBV_operad}
  The \emph{exact Batalin--Vilkovisky operad} $\EBV$ is the DG operad generated by $\mu\in\EBV(2)^0$ and $i\in\EBV(1)^{-2}$ with 
  \[\mu\cdot (1\ 2)=\mu\]
  subject to the relations:
  \begin{enumerate}
    \item $d(\mu)=0$.
    \item $\mu\circ_1\mu=\mu\circ_2\mu$.
          \item\label{it:EBV_commutativity} $id(i)=d(i)i$.
          \item\label{it:EBV_differential operator} $(\mu^2\circ_1i)\cdot[()+(1\ 2)+(1\ 2\ 3)]+i\mu^2=\mu\circ_1(i\mu)\cdot[()+(2\ 3)+(1\ 3\ 2)]$.
  \end{enumerate}
  As in \Cref{def:BV_operad}, the suboperad generated by $\mu$ is the commutative operad $\C\subset\EBV$. Relation \eqref{it:EBV_commutativity} corresponds to \eqref{eq:i_commutes_with_its_differential}, and  \eqref{it:EBV_differential operator} is \Cref{def:differential_operator} for $n=2$.
\end{definition}

The operadic counterpart of \Cref{lem:EBV_is_BV} is the following \namecref{lem:BV_to_EBV}.

\begin{proposition}\label{lem:BV_to_EBV}
  There is a morphism of DG operads $\BV\to\EBV$ given by $\mu\mapsto\mu$ and $\Delta\mapsto -d(i)$.
\end{proposition}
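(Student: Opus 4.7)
My plan is to verify that the assignment on generators extends to a DG operad morphism by checking (a) compatibility with degree and $\sym$-action, (b) compatibility with differentials on generators, and (c) that each defining relation of $\BV$ is sent to zero in $\EBV$. Parts (a) and (b) are immediate: $\mu$ has the same bidegree and is fixed by $(1\,2)$ in both operads, while $-d(i)$ lies in arity $1$ with degree $|d|+|i|=1+(-2)=-1=|\Delta|$; moreover $d(\mu)=0$ in both operads, and $d(-d(i))=-d^2(i)=0$. The substantive content is thus (c).

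Relations (i) and (ii) of $\BV$ coincide with relations (i) and (ii) of $\EBV$, and relation (iii), $d(\Delta)=0$, has just been verified. The differential operator relation (v) for $\Delta$ will be obtained by applying the operadic differential to relation (iv) of $\EBV$, which is the order $\leq 2$ condition for $i$: since $d(\mu)=0$, the Leibniz rule leaves the $\mu$-factors untouched and simply replaces each occurrence of $i$ by $d(i)=-\Delta$, yielding the order $\leq 2$ relation for $\Delta$ up to an overall sign of $-1$, which is immaterial since the relation equates to zero.

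The one slightly nontrivial step is checking relation (iv) of $\BV$, namely $\Delta^2 = 0$. Since $\Delta\circ_1\Delta = (-d(i))\circ_1(-d(i)) = d(i)\circ_1 d(i)$, the task reduces to showing $d(i)\circ_1 d(i)=0$ in $\EBV$. I plan to apply $d$ to relation (iii) of $\EBV$, $i\circ_1 d(i) = d(i)\circ_1 i$. By the graded Leibniz rule for operadic composition together with $d^2=0$, the left-hand side computes to $d(i)\circ_1 d(i) + (-1)^{|i|} i\circ_1 d^2(i) = d(i)\circ_1 d(i)$, while the right-hand side computes to $d^2(i)\circ_1 i + (-1)^{|d(i)|}d(i)\circ_1 d(i) = -d(i)\circ_1 d(i)$ because $|d(i)|=-1$ is odd. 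The resulting identity $d(i)\circ_1 d(i) = -d(i)\circ_1 d(i)$ forces $d(i)\circ_1 d(i)=0$ in characteristic zero.

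The main obstacle—such as it is—is this Koszul sign computation, which mirrors at the operadic level the identity $[i,[i,d]]=0\Rightarrow [i,\Delta]=0\Rightarrow \Delta^2=0$ that motivates \Cref{lem:EBV_is_BV}. Once this is settled, the remaining items are bookkeeping, and it is noteworthy that only relations (iii) and (iv) of $\EBV$ are needed.
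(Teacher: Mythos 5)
Your proof is correct. The paper offers no explicit proof of this proposition---it is presented as the operadic counterpart of \Cref{lem:EBV_is_BV}---and your verification is precisely the operadic transcription of that argument: the key step, deriving $d(i)\circ_1 d(i)=0$ by applying the operadic differential to relation \eqref{it:EBV_commutativity} of $\EBV$ and using that $d(i)$ has odd degree, matches the paper's algebra-level deduction of $\Delta^2=0$ from \eqref{eq:i_commutes_with_its_differential}, and the remaining checks (linearity of the order-$\leq 2$ relation in $\Delta$, degrees, $\sym$-equivariance) are exactly the bookkeeping one would expect.
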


Now it is the turn for the presentation of the operad for trivialized $BV$-algebras.

\begin{definition}\label{def:BV/D_operad}
  The \emph{trivialized Batalin--Vilkovisky operad} $\BV/\Delta$ is the DG operad generated by $\mu\in(\BV/\Delta)(2)^0$, $\Delta\in(\BV/\Delta)(1)^{-1}$, and $\phi_n\in(\BV/\Delta)(1)^{-2n}$, $n\geq 1$, with
  \[\mu\cdot (1\ 2)=\mu\]
  subject to the relations in \Cref{def:BV_operad} together with:
  \[d(\phi_1)=-\Delta,\qquad d(\phi_n)+\sum_{p=2}^n\frac{1}{p!}\sum_{\substack{q_1+\cdots+q_p=n\\q_1,\dots,q_p\geq 1}}
    [\phi_{q_1},\dots[\phi_{q_{p-1}},d(\phi_{q_p})]\dots]=0,\quad n\geq 2.\]
  These new relations correspond to the equations in \Cref{def:BV/D_algebra}. 
  This an honest DG operad with $d\neq 0$. Clearly, $\BV\subset\BV/\Delta$ is a suboperad. The inclusion $\BV\rightarrowtail \BV/\Delta$ is a cofibration in $\Operads$, it is actually a standard cofibration in the sense of \cite[\S6.4]{hinich_1997_homological_algebra_homotopy}.
\end{definition}

The following \namecref{thm:formality_BV/D} is the main result of \cite{khoroshkin_markarian_shadrin_2013_hypercommutative_operad_homotopy}.

\begin{theorem}[{\cite[Theorem 1.3]{khoroshkin_markarian_shadrin_2013_hypercommutative_operad_homotopy}}]\label{thm:formality_BV/D}
  The operad $\BV/\Delta$ is formal and its cohomology is the hypercommutative operad $H^*(\BV/\Delta)=\H$. Moreover there is a direct quasi-isomorphism $\H\to\BV/\Delta$.
\end{theorem}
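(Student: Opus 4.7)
The plan is to construct an explicit direct operad morphism $\varphi\colon\H\to\BV/\Delta$ on generators and then show it is a quasi-isomorphism via a filtration argument. The hypercommutative operad $\H$ is generated by totally symmetric arity-$n$ operations $m_n$ of degree $2(2-n)$ for $n\geq 2$, subject to the Kontsevich--Manin relations reflecting the boundary stratification of the Deligne--Mumford moduli space $\overline{M}_{0,n+1}$. In $\BV/\Delta$, the composite $\phi_{n-2}\mu^{(n)}$, where $\mu^{(n)}$ denotes the (unambiguous) $n$-fold iterated product and $\phi_0=\mathrm{id}$, has degree $-2(n-2)=2(2-n)$, matching that of $m_n$. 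A natural candidate is therefore $\varphi(m_n)=\phi_{n-2}\mu^{(n)}$, with $\varphi(m_2)=\mu$ so that the inclusion of commutative algebras is preserved.

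The first step is to verify that the Kontsevich--Manin relations are sent to zero by $\varphi$. Unpacking the differentials $d(\phi_n)=-\sum_{p=2}^{n}\frac{1}{p!}\sum[\phi_{q_1},\dots,[\phi_{q_p},d]\dots]$ together with the standard $BV$ relations, the image $\varphi(m_n)$ has a cochain-level expansion as an alternating sum of compositions which, after a careful combinatorial reorganization, recombines into the sums of $\phi_{k}\mu^{(k)}\circ_{i}\phi_{n-k}\mu^{(n-k+1)}$ dictated by the Kontsevich--Manin relations. This is essentially the operadic reflection of the Barannikov--Kontsevich/Losev--Shadrin/Park construction of a hypercommutative structure on the cohomology of an exact $BV$-algebra out of a Givental-type trivialization.

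The second step is to prove that $\varphi$ is a quasi-isomorphism. I would filter $\BV/\Delta$ by the total number of occurrences of $\Delta$ and the generators $\phi_n$, giving a convergent spectral sequence whose $E_0$ differential isolates the purely $\phi$-linear contribution. On the $E_1$-page, the resulting Koszul-type complex of $\phi$-operators collapses onto a single column which is identified with $\H$ using the description of $\BV$ as a Koszul-in-the-broader-sense operad of \cite{galvez-carrillo_tonks_vallette_2012_homotopy_batalin_vilkovisky}. Since the map $\varphi$ respects the filtration and induces the identification on $E_1$, it is a quasi-isomorphism, and formality of $\BV/\Delta$ follows at once because a direct quasi-isomorphism from the cohomology to the operad exists.

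The main obstacle is the combinatorial matching of step one: the Kontsevich--Manin relations are highly symmetric expressions summing over partitions of leaves, while the expansions in $\BV/\Delta$ come from repeated applications of the trivialization coherences. Rather than brute-forcing the sign-bookkeeping, the cleanest route is to identify $\BV/\Delta$ with the cobar construction of a specific cooperad dual to $\H$ via the Gálvez-Carrillo--Tonks--Vallette framework, and then to recognize $\varphi$ as the canonical map from the cohomology to this cobar; the Kontsevich--Manin relations are then automatic from the cooperadic coassociativity on the dual side. The hard technical work thus reduces to identifying the correct cooperad and matching the cobar differential with the $\phi_n$-coherences.
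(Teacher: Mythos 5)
You should first note that the paper does not prove this statement at all: it is quoted directly from Khoroshkin--Markarian--Shadrin \cite[Theorem 1.3]{khoroshkin_markarian_shadrin_2013_hypercommutative_operad_homotopy}, so there is no internal proof to compare against, and any argument you give is a from-scratch proof of a substantial external theorem. Judged on its own terms, your proposal has a concrete gap at the very first step. Since $\H$ carries the zero differential, an operad morphism $\varphi\colon\H\to\BV/\Delta$ must send each generator $m_n$ to a \emph{cocycle}, but your candidate $\varphi(m_n)=\phi_{n-2}\mu^{(n)}$ is not closed: $d(\phi_{n-2}\circ_1\mu^{(n)})=d(\phi_{n-2})\circ_1\mu^{(n)}$, and already for $n=3$ this is $-\Delta\mu^2\neq 0$ (writing $\mu^2=\mu\circ_1\mu$ as in the paper), a nonzero element of the weight-zero part $\BV(3)\subset(\BV/\Delta)(3)$. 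The correct representative must include the lower-order corrections forced by the seven-term relation; for $n=3$ one may take
\[
\phi_1\mu^2-\mu\circ_1(\phi_1\mu)\cdot[()+(2\ 3)+(1\ 3\ 2)]+(\mu^2\circ_1\phi_1)\cdot[()+(1\ 2)+(1\ 2\ 3)],
\]
whose differential vanishes precisely by relation (5) of \Cref{def:BV_operad}, i.e.\ the cocycle is the order-$\leq 2$ deviation of $\phi_1$ from being a derivation of the product. For $n\geq 4$ the images are sums over trees decorated by several $\phi_q$'s, reflecting $\psi$-class intersections on $\overline{M}_{0,n+1}$, not a single $\phi_{n-2}$ precomposed with the iterated product. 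Without the correct formula, neither closedness nor the Kontsevich--Manin relations can be checked, so step one does not get off the ground.

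The quasi-isomorphism step is also not yet an argument. Filtering by the total number of occurrences of $\Delta$ and the $\phi_n$ does not isolate a ``purely $\phi$-linear'' $E_0$-differential: the relation $d(\phi_n)=-\sum_{p\geq 2}\frac{1}{p!}\sum[\phi_{q_1},\dots[\phi_{q_{p-1}},d(\phi_{q_p})]\dots]-\,(\text{$\Delta$-term})$ mixes strata with different numbers of $\phi$'s, so the associated graded is not what you describe, and identifying the $E_1$-page with $\H$ is exactly the hard content of the theorem, not a consequence of setting up the spectral sequence. The actual proof in the cited reference (and its companions) rests on Getzler's computation of $H^*(\BV)$, the splitting of $\BV$ as $\G\otimes k[\Delta]$ in the framework of \cite{galvez-carrillo_tonks_vallette_2012_homotopy_batalin_vilkovisky}, and the Koszul duality between the gravity and hypercommutative operads; your closing suggestion to recognize $\BV/\Delta$ as a cobar construction is in the right spirit (it is close to the Drummond-Cole--Vallette approach) but is stated as an aspiration rather than carried out. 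As it stands the proposal is a roadmap, not a proof.
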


We now derive some consequences of this \namecref{thm:formality_BV/D} and \Cref{thm:Poisson_BV/D}

\begin{corollary}[{\cite[Example 1.3]{khoroshkin_markarian_shadrin_2013_hypercommutative_operad_homotopy}}]\label{cor:Poisson_complex_hypercommutative}
  The de Rham complex $\CdR{M}$ of a Poisson manifold $(M,\pi)$ has a hypercommutative algebra structure 
  obtained from the trivialized $BV$-algebra structure (\Cref{thm:Poisson_BV/D}) by restriction of scalars along the quasi-isomorphism in \Cref{thm:formality_BV/D}. It extends the commutative algebra structure given by the exterior product.
\end{corollary}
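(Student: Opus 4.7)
The plan is to combine the two immediately preceding results formally. By \Cref{thm:Poisson_BV/D}, the de Rham complex $\CdR{M}$ is equipped with a structure of algebra over $\BV/\Delta$, i.e.\ a DG operad morphism $\BV/\Delta\to\E{\CdR{M}}$. By \Cref{thm:formality_BV/D}, there is a direct DG operad quasi-isomorphism $\H\to\BV/\Delta$. Composing these two morphisms yields a DG operad morphism
\[\H\longrightarrow\BV/\Delta\longrightarrow\E{\CdR{M}},\]
which by definition is a hypercommutative algebra structure on $\CdR{M}$. This is precisely the restriction of scalars of the trivialized $BV$-algebra structure along $\H\to\BV/\Delta$, as claimed.

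To check that this hypercommutative algebra structure extends the commutative one, I would trace the commutative generator through the composition. The commutative suboperad $\C\subset\H$ is generated by the arity $2$ product $\mu$, and under $\H\to\BV/\Delta$ (a quasi-isomorphism between operads which agree with $\C$ in weight zero) this $\mu$ is sent to the commutative generator $\mu\in\BV/\Delta$. Under the structure morphism $\BV/\Delta\to\E{\CdR{M}}$ supplied by \Cref{thm:Poisson_BV/D}, this $\mu$ acts as the exterior product of differential forms. Hence the restriction along $\C\subset\H$ of the hypercommutative structure is the exterior product, which is the content of the second sentence of the corollary. This is exactly the commutativity of the relevant triangle in diagram \eqref{eq:Poisson_diagram}.

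There is no real obstacle here: the entire statement is a formal consequence of composing two operad morphisms and unpacking the definitions. The only point that deserves a brief verification is that the quasi-isomorphism $\H\to\BV/\Delta$ of \Cref{thm:formality_BV/D} sends the commutative generator of $\H$ to the commutative generator of $\BV/\Delta$, which is standard from the construction in \cite{khoroshkin_markarian_shadrin_2013_hypercommutative_operad_homotopy}.
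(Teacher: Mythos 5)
Your argument is correct and coincides with the paper's (the paper treats this corollary as an immediate consequence of \Cref{thm:Poisson_BV/D} and \Cref{thm:formality_BV/D}, exactly via the composition of operad morphisms you describe). The verification that the commutative generator is preserved, hence that the structure extends the exterior product, is the same observation encoded in diagram \eqref{eq:Poisson_diagram}.
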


The following result can be found in \cite{barannikov_kontsevich_1998_frobenius_manifolds_formality,manin_1999_frobenius_manifolds_quantum,losev_shadrin_2007_zwiebach_invariants_getzler,park_2007_semiclassical_quantum_fields}.

\begin{corollary}\label{cor:Poisson_cohomology_hypercommutative}
  The de Rham cohomology $\HdR{M}$ of a Poisson manifold $(M,\pi)$ has a hypercommutative algebra structure induced by \Cref{cor:Poisson_complex_hypercommutative}. It extends the commutative algebra structure induced by the exterior product.
\end{corollary}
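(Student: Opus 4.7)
The plan is to obtain the hypercommutative structure on $\HdR{M}$ by a straightforward cohomology-functor argument applied to the hypercommutative structure on the de Rham complex provided by \Cref{cor:Poisson_complex_hypercommutative}. Concretely, that corollary gives a DG operad morphism $\H\to\E{\CdR{M}}$. Since the hypercommutative operad $\H$ is a graded operad (its internal differential is trivial), taking cohomology gives $H^*(\H)=\H$. Composing with the canonical DG operad morphism $H^*(\E{\CdR{M}})\to\E{H^*(\CdR{M})}=\E{\HdR{M}}$, which exists for any complex and sends the cohomology class of an operation $f$ to the induced operation on cohomology, we obtain a DG operad morphism
\[
\H \;=\; H^*(\H)\;\longrightarrow\; H^*(\E{\CdR{M}}) \;\longrightarrow\; \E{\HdR{M}},
\]
which by definition is a hypercommutative algebra structure on $\HdR{M}$.

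To verify the compatibility claim, I would chase the commutative square
\begin{center}
\begin{tikzcd}
\C \arrow[r, hook] \arrow[d] & \H \arrow[d]\\
\E{\CdR{M}} \arrow[r] & \E{\HdR{M}}
\end{tikzcd}
\end{center}
whose left vertical is the commutative algebra structure on $\CdR{M}$ given by the exterior product (this is the inclusion referred to in the left leg of diagram~\eqref{eq:Poisson_diagram}), and whose right vertical is the one just constructed. Commutativity follows at the cochain level from \Cref{cor:Poisson_complex_hypercommutative} (the hypercommutative structure on $\CdR{M}$ extends the exterior product) and is preserved by $H^*$. Reading off the top-right composite shows that the hypercommutative structure on $\HdR{M}$ restricts along $\C\hookrightarrow\H$ to the commutative algebra structure induced by the exterior product, as claimed.

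There is no real obstacle here: the corollary is essentially the observation that primary structures pass to cohomology functorially. The only mild subtlety worth spelling out is the existence of the natural transformation $H^*(\E{-})\to\E{H^*(-)}$ of DG operads, which relies on the standard fact that if $f,g\colon A^{\otimes n}\to A$ represent the same cohomology class then they induce the same map on $H^*(A)^{\otimes n}\to H^*(A)$ via the Künneth formula (valid in characteristic zero, where cohomology commutes with tensor products, and with the $\sym[n]$-actions taken into account).
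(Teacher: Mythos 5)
Your proposal is correct and matches the paper's (implicit) argument: the paper gives no separate proof for this corollary, simply asserting that the structure is \emph{induced} by \Cref{cor:Poisson_complex_hypercommutative}, which is precisely the functoriality-of-cohomology argument you spell out, using that $\H$ has trivial differential and that $H^*(\E{-})\to\E{H^*(-)}$ exists over a field via K\"unneth. Your compatibility square for the restriction along $\C\hookrightarrow\H$ is also the right justification for the ``extends the exterior product'' clause.
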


We can actually go further, deriving a slight strengthening of \cite[Theorem 3.7]{dotsenko_shadrin_vallette_2015_rham_cohomology_homotopy}.

\begin{corollary}\label{cor:Poisson_cohomology_H_infty}
  The de Rham cohomology $\HdR{M}$ of a Poisson manifold $(M,\pi)$ has a minimal $\H_\infty$-algebra structure extending \Cref{cor:Poisson_cohomology_hypercommutative} whose rectification is quasi-isomorphic to the hypercommutative algebra $\CdR{M}$ (\Cref{cor:Poisson_complex_hypercommutative}).
\end{corollary}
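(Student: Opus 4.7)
The plan is to apply the homotopy transfer theorem \cite[Theorem 10.3.1]{loday_vallette_2012_algebraic_operads} to the hypercommutative algebra $\CdR{M}$ of \Cref{cor:Poisson_complex_hypercommutative}. Since the ground field $\mathbb{R}$ has characteristic zero, one can choose a contraction of the de Rham complex onto its cohomology (endowed with zero differential). The operad $\H$ being Koszul, such a contraction yields a minimal $\H_\infty$-algebra structure on $\HdR{M}$ together with an $\infty$-quasi-isomorphism to $\CdR{M}$ extending the section of the contraction.

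To verify that this minimal $\H_\infty$-structure extends \Cref{cor:Poisson_cohomology_hypercommutative}, I would inspect the explicit tree formulas of the homotopy transfer theorem. The transferred operation indexed by a corolla, that is, a tree with no internal edge and hence no internal homotopy, is the composite $p \circ \mu_T \circ i^{\otimes n}$, where $\mu_T$ is the corresponding hypercommutative operation on $\CdR{M}$ and $i$, $p$ are the section and retraction of the contraction. These composites are exactly the operations defining the induced hypercommutative structure on $\HdR{M}$ in \Cref{cor:Poisson_cohomology_hypercommutative}, so the primary part of the transferred $\H_\infty$-structure agrees with the hypercommutative one.

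Finally, for the rectification statement, I would apply the rectification functor from $\H_\infty$-algebras to $\H$-algebras, which is left adjoint to restriction of scalars along $\H_\infty \to \H$. The $\infty$-quasi-isomorphism produced by homotopy transfer rectifies to a quasi-isomorphism of hypercommutative algebras between the rectification of the minimal $\H_\infty$-algebra on $\HdR{M}$ and the rectification of $\CdR{M}$ viewed as an $\H_\infty$-algebra; the latter is linked to $\CdR{M}$ itself by the counit of the adjunction, which is a quasi-isomorphism since $\CdR{M}$ is already an $\H$-algebra. The main obstacle is really just bookkeeping: verifying that the primary part of the HTT output coincides on the nose with the induced hypercommutative structure and that rectification sends the HTT $\infty$-quasi-isomorphism to a genuine quasi-isomorphism. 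Both points follow from standard properties of HTT and of the rectification-restriction adjunction as developed in \cite{loday_vallette_2012_algebraic_operads}, so the novelty over \cite[Theorem 3.7]{dotsenko_shadrin_vallette_2015_rham_cohomology_homotopy} lies in combining extension and rectification in a single statement.
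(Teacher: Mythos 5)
Your proposal is correct and follows essentially the same route as the paper, which presents this corollary as a direct application of the homotopy transfer theorem \cite[Theorem 10.3.1]{loday_vallette_2012_algebraic_operads} to the hypercommutative algebra $\CdR{M}$ of \Cref{cor:Poisson_complex_hypercommutative}, with the primary part of the transferred structure recovering \Cref{cor:Poisson_cohomology_hypercommutative} and the rectification--restriction adjunction giving the final claim. No discrepancies to report.
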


The operadic counterpart of \Cref{lem:EBV_is_BV/D} is the following \namecref{lem:BV/D_to_EBV}.

\begin{proposition}\label{lem:BV/D_to_EBV}
  There is a morphism of DG operads $\BV/\Delta\to\EBV$ extending that of \Cref{lem:BV_to_EBV} and satisfying $\phi_1\mapsto i$ and $\phi_n\mapsto 0$ for $n\geq 2$.
\end{proposition}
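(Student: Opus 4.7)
The plan is to check that the stated assignment on generators extends to a well-defined morphism by verifying the defining relations of $\BV/\Delta$ listed in \Cref{def:BV/D_operad}. The $\BV$-relations are already handled by \Cref{lem:BV_to_EBV}, so only the new relations involving the trivializing cochain $\phi_n$ require attention.

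For $n=1$, the relation $d(\phi_1) = -\Delta$ maps to $d(i) = -(-d(i))$, which is tautological. For $n \geq 2$, since $\phi_n \mapsto 0$ we also have $d(\phi_n) \mapsto 0$, and in the right-hand sum every term with some $q_j \geq 2$ vanishes in $\EBV$ because the corresponding $\phi_{q_j}$ is sent to $0$. The only surviving contribution comes from the unique composition $q_1 = \cdots = q_p = 1$, which forces $p = n$, leaving the single iterated bracket
\[
\frac{1}{n!}\,[\,i,\,[\,i,\ldots [\,i,\,d(i)\,]\,\ldots\,]\,]
\]
with $n-1$ outer copies of $i$ around the innermost $d(i)$.

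The key step is to recognize that relation \eqref{it:EBV_commutativity} of \Cref{def:EBV_operad}, namely $id(i) = d(i)\,i$, is exactly the vanishing of the graded commutator $[i, d(i)]$ (the Koszul sign is trivial since $|i|\cdot|d(i)| = 2$). Hence the innermost bracket of the nested expression is already zero, and every outer bracket therefore vanishes as well, verifying the relation for all $n \geq 2$. No serious obstacle is anticipated: the proof is a straightforward generator-and-relations check, and the substantive content is simply that \Cref{def:EBV_operad}\eqref{it:EBV_commutativity} is the operadic incarnation of the triple commutator condition $[i,[i,d]]=0$ underpinning \Cref{lem:EBV_is_BV/D}, which is precisely what makes the trivialization work with $\phi_1 = i$ alone and all higher $\phi_n$ vanishing.
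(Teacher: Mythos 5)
Your proposal is correct and follows essentially the same route as the paper, which proves this proposition implicitly by pointing to \Cref{lem:EBV_is_BV/D}: the only relations to check are the $\phi$-relations, the sole surviving term for $n\geq 2$ is the iterated bracket $[i,\dots,[i,d(i)]\dots]$, and this vanishes because the innermost bracket $[i,d(i)]=id(i)-d(i)i$ is killed by \Cref{def:EBV_operad}~\eqref{it:EBV_commutativity}, the operadic form of $[i,[i,d]]=0$. Your sign check and the identification of the unique composition $q_1=\cdots=q_p=1$, $p=n$, are both accurate.
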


We can now start proving the results on Poisson manifolds announced in the introduction. We will recall the statements.

\Poissonfactorization*

\begin{proof}
  This is equivalent to \Cref{cor:Poisson_factors}. The first arrow of the factorization is \Cref{lem:BV/D_to_EBV} and the second one is given by \Cref{prop:Poisson_EBV}.
\end{proof}

\Poisson*

\begin{proof}
  Let $C$ be the cochain complex
  \begin{center}
    \begin{tikzpicture}
      \node (C0) at (0,0) {$\cdots$};
      \node (C1) at (1,0) {$0$};
      \node (C2) at (2,0) {$k$};
      \node (C3) at (3,0) {$k$};
      \node (C4) at (4,0) {$0$};
      \node (C5) at (5,0) {$\cdots$};
      \draw[->] (C0) -- (C1);
      \draw[->] (C1) -- (C2);
      \draw[->] (C2) -- (C3);
      \draw[->] (C3) -- (C4);
      \draw[->] (C4) -- (C5);
      \node at (-.2,.5) {\scriptsize degree};
      \node at (.9,.5) {$\scriptstyle -3$};
      \node at (1.9,.5) {$\scriptstyle -2$};
      \node at (2.9,.5) {$\scriptstyle -1$};
      \node at (4,.5) {$\scriptstyle 0$};
      \node at (5,.5) {$\cdots$};
      \node at (-.5,-.5) {\scriptsize generators};
      \node (i) at (2,-.5) {$i$};
      \node (di) at (3,-.505) {$d(i)$};
      \draw[|->] (i) -- (di);
    \end{tikzpicture}
  \end{center}
  We equip this complex with the SDR 
  \begin{center}
    \SDR{0}{C}[][][h]
  \end{center}
  determined by the formula
  \[hd(i)=i.\]
  We regard $C$ as an $\sym$-module concentrated in arity $1$ and the SDR as an $\sym$-module SDR. The operad $\EBV$ is the quotient of $\P=\C\amalg\F(C)$ by the DG ideal $I$ generated by 
  \begin{enumerate}
    \item\label{it:rel_com} $id(i)-d(i)i$.
    \item\label{it:diff_2} $(\mu^2\circ_1i)\cdot[()+(1\ 2)+(1\ 2\ 3)]+i\mu^2-\mu\circ_1(i\mu)\cdot[()+(2\ 3)+(1\ 3\ 2)]$
  \end{enumerate}
  These generators are weight-homogeneous of weights $2$ and $1$, respectively. The homotopy $h_C$ of \Cref{thm:operad_SDR} maps both generators to $0\in I$, by \Cref{thm:operad_SDR} \eqref{it:h_composition}. Indeed, $h\eqref{it:diff_2}=0$ is easy because \eqref{it:diff_2} has weight $1$ and $h(i)=0$ for degree reasons. For \eqref{it:rel_com}, 
  \begin{align*}
    h(id(i)-d(i)i) & =\frac{1}{2}h(i)d(i)+\frac{1}{2}ihd(i)-\frac{1}{2}hd(i)i+\frac{1}{2}d(i)h(i) \\
                   & =0+\frac{1}{2}i^2-\frac{1}{2}i^2+0                                           \\
                   & =0.
  \end{align*}
  
  Therefore, the composite
  \[\C\hookrightarrow\P\twoheadrightarrow\EBV\]
  is a quasi-isomorphism by \Cref{thm:ideal}. This composite is the inclusion in the statement.
\end{proof}

We now proceed with the corollaries. The first one is connected to \Cref{cor:Poisson_cohomology_hypercommutative}.

\trivialhypercommutativecohomology*

\begin{proof}
  This hypercommutative algebra structure on $\HdR{M}$ is the one in \Cref{cor:Poisson_cohomology_hypercommutative}.
  This \namecref{cor:trivial_hypercommutative_cohomology} follows from \Cref{cor:Poisson_factors} and from the fact that the composite $\C\to\H\to\BV/\Delta\to\EBV$ of the inclusion of the degree $0$ part $\C\subset\H$ with the maps in \Cref{thm:formality_BV/D} and \Cref{lem:BV/D_to_EBV} is the quasi-isomorphism in \Cref{thm:Poisson}.
\end{proof}

The second one is related to \Cref{cor:Poisson_complex_hypercommutative} and requires a more technical argument.

\trivialhypercommutativecohomologystrict*

\begin{proof}
  The first hypercommutative algebra structure on $\CdR{M}$ is the one in \Cref{cor:Poisson_complex_hypercommutative}.
  Both hypercommutative algebra structures can be obtained from two exact $BV$-algebra structures on $\CdR{M}$ with the same commutative product (the exterior product) by restriction of scalars along the map $\H\to\EBV$ which is the composition of the quasi-isomorphism $\H\to\BV/\Delta$ in \eqref{eq:Poisson_diagram}, see also \Cref{thm:formality_BV/D}, and the map $\BV/\Delta\to\EBV$ in \Cref{prop:Poisson_factorization,lem:BV/D_to_EBV}. In the first exact $BV$-algebra structure $i=i_\pi$ (\Cref{prop:Poisson_EBV}) and in the second one $i=0$. It therefore suffices to prove that these exact $BV$-algebra structures on $\CdR{M}$ are quasi-isomorphic.
  
  By \Cref{thm:Poisson} and \cite[Theorem 4.7.4]{hinich_1997_homological_algebra_homotopy}, the restriction of scalars along the inclusion $\C\subset\EBV$ induces an equivalence between the homotopy categories of exact $BV$-algebras and DG commutative algebras. Since the two exact $BV$-algebra structures in the previous paragraph have the same underlying DG commutative algebra structure, the corollary follows.
\end{proof}

The following corollary is a immediate consequence of the previous one.

\trivialhypercommutativecohomologyinfty*

The remaining corollary ensures the formality of the $\H$-algebra $\CdR{M}$ (\cite{drummond-cole_vallette_2013_minimal_model_batalin,khoroshkin_markarian_shadrin_2013_hypercommutative_operad_homotopy,dotsenko_shadrin_vallette_2015_rham_cohomology_homotopy}) when $M$ is a formal manifold.

\hypercommutativeformality*

\begin{proof}
  If $M$ is $\H$-formal then it is formal since $\C\subset\H$. Suppose now $M$ is formal. A zig-zag of commutative algebra quasi-isomorphisms between $\CdR{M}$ and $\HdR{M}$ is also a zig-zag of $\H$-algebra quasi-isomorphisms between the corresponding trivial $\H$-algebra structures. Now the result follows from \Cref{cor:trivial_hypercommutative_cohomology_strict}.
\end{proof}

The previous triviality and formality results extend to the operads which sit over $\EBV$. Let us start by defining the precise triviality notions.

\begin{definition}\label{def:trivial_algebras}
  Let $\P$ be a graded operad concentrated in degrees $\leq 0$ and $\P^0\subset\P$ its degree $0$ suboperad. A \emph{$\P$-algebra} is \emph{trivial} if it is quasi-isomorphic to the restriction of scalars of its underlying $\P^0$-algebra along the unique retraction $\P\to\P^0$.
\end{definition}

\begin{remark}
  If $\P$ is Koszul then so is $\P^0$. In this case, a $\P$-algebra $A$ is trivial if and only if the transferred $\P_\infty$-algebra structure on $H^*(A)$ is $\infty$-isomorphic to the restriction of scalars of the transferred $\P^0_\infty$-algebra structure along the morphism $\P_\infty\to\P^0_\infty$ induced by the retraction $\P\to\P^0$.
\end{remark}

\begin{remark}
  Let us specify \Cref{def:trivial_algebras} for some particular cases of interest.
  
  A (shifted) Lie algebra is trivial if it is abelian. A Gerstenhaber algebra is trivial if the underlying shifted Lie algebra structure is abelian. A $BV$-algebra is trivial if $\Delta=0$. 
  
  For the previous kinds of algebras $A$, we now explain how triviality can be detected in the transferred $\infty$-algebra $H^*(A)$.  
  A (shifted) Lie algebra $A$ is trivial if all operations vanish in the (shifted) $\Lie_\infty$-algebra $H^*(A)$. If $A$ is a Gerstenhaber algebra, the $\G_\infty$-algebra structure on $H^*(A)$ is defined by operations $m_{p_1,\dots,p_t}$, $t\geq 1$, $1\leq p_1\leq\cdots\leq p_t$, of arity $p_1+\cdots+p_t$ and degree $3-t-p_1-\cdots-p_t$ satisfying certain equations, see \cite{tamarkin_tsygan_2000_noncommutative_differential_calculus} and \cite[Proposition 16]{galvez-carrillo_tonks_vallette_2012_homotopy_batalin_vilkovisky}. Its underlying $\C_\infty$-algebra structure is given by the operations $m_{p_1}$ and triviality is equivalent to $m_{p_1,\dots,p_t}=0$ for $t\geq 2$. Similarly, if $A$ is a $BV$-algebra the $\BV_\infty$-algebra structure on $H^*(A)$ is given by operations $m^d_{p_1,\dots,p_t}$, $d\geq 0$, $t\geq 1$, $1\leq p_1\leq\cdots\leq p_t$, of arity $p_1+\cdots+p_t$ and degree $3-2d-t-p_1-\cdots-p_t$ satisfying certain equations, see \cite[Theorem 20]{galvez-carrillo_tonks_vallette_2012_homotopy_batalin_vilkovisky}. Its underlying $\C_\infty$-algebra structure is given by the operations $m^0_{p_1}$ and triviality corresponds to the vanishing of the remaining operations.
\end{remark}

\begin{corollary}\label{cor:P-trivial}
  Given a Poisson manifold $(M,\pi)$, a graded operad $\P$ concentrated in degrees $\leq 0$, and an operad morphism $\P\to\EBV$, the $\P$-algebra structure on $\CdR{M}$ obtained from \Cref{prop:Poisson_EBV} by restriction of scalars is quasi-isomorphic to the trivial $\P$-algebra structure.
\end{corollary}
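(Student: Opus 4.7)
My plan is to reduce to \Cref{cor:trivial_hypercommutative_cohomology_strict}. The key observation I would start with is that, since $i\in\EBV$ has degree $-2$ and $d(i)$ has degree $-1$, while $\mu$ is the unique degree $0$ generator, the degree zero part of $\EBV$ is exactly the commutative suboperad: $\EBV^0=\C$. This lets me restrict the given operad morphism $f\colon\P\to\EBV$ to a morphism $f^0\colon\P^0\to\C\subset\EBV$, and identify the underlying $\P^0$-algebra structure on $\CdR{M}$ (used to define the trivial $\P$-algebra structure via restriction along the retraction $\P\twoheadrightarrow\P^0$) with the one induced by the exterior product commutative algebra.

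With this in hand, I would exhibit both $\P$-algebra structures on $\CdR{M}$ as pullbacks along $f$ of two $\EBV$-algebra structures sharing the same underlying commutative DG algebra. The structure in the statement is the pullback of the exact $BV$-algebra $A_\pi$ of \Cref{prop:Poisson_EBV}, where $i$ acts as $i_\pi$. The trivial one is the pullback of the $\EBV$-algebra $A_0$ obtained by composing the exterior product $\C$-algebra structure $\C\to\End{\CdR{M}}$ with the canonical retraction $\EBV\twoheadrightarrow\C$ (i.e.~$i$ acting as $0$). Both $A_\pi$ and $A_0$ underlie the same DG commutative algebra on $\CdR{M}$, namely the exterior product.

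Next, by \Cref{thm:Poisson} together with \cite[Theorem 4.7.4]{hinich_1997_homological_algebra_homotopy}, the restriction of scalars along $\C\hookrightarrow\EBV$ is an equivalence of homotopy categories, exactly as in the proof of \Cref{cor:trivial_hypercommutative_cohomology_strict}. I would conclude from this that $A_\pi$ and $A_0$ are linked by a zig-zag of $\EBV$-algebra quasi-isomorphisms. Applying restriction of scalars along $f\colon\P\to\EBV$ to this zig-zag produces a zig-zag of $\P$-algebra quasi-isomorphisms between the given $\P$-algebra structure on $\CdR{M}$ and the trivial one.

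The main, admittedly minor, obstacle I anticipate is cleanly verifying $\EBV^0=\C$ and that the trivial $\P$-algebra structure on $\CdR{M}$ genuinely coincides with the pullback of $A_0$ along $f$; once these identifications are in place, the result follows directly from \Cref{cor:trivial_hypercommutative_cohomology_strict} and the functoriality of restriction of scalars.
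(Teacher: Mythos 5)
Your proposal is correct and follows essentially the same route as the paper: both identify the degree $0$ suboperad $\EBV^0=\C$, realize the two $\P$-algebra structures as restrictions along $\P\to\EBV$ of the exact $BV$-algebra structures with $i=i_\pi$ and $i=0$ on the same underlying commutative algebra, and invoke the homotopy-category equivalence coming from \Cref{thm:Poisson} and Hinich's theorem exactly as in the proof of \Cref{cor:trivial_hypercommutative_cohomology_strict}. The only point you flag as an obstacle --- that the trivial $\P$-algebra is the pullback of the $i=0$ structure --- is handled in the paper by noting that the retractions $\P\to\P^0$ and $\EBV\to\C$ are the unique ones, so the relevant squares automatically commute.
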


\begin{proof}
  Consider the following diagram where the top vertical arrows are the inclusions of the degree $0$ suboperads and the bottom vertical arrows are the only possible retractions (hence the two squares commute),
  \[\begin{tikzcd}
      \P^0\ar[r]\ar[d]&\C\ar[d]\\
      \P\ar[r]\ar[d]&\EBV\ar[d]\ar[rdd, bend left, "\text{\Cref{prop:Poisson_EBV}}"]\\
      \P^0\ar[r]&\C\ar[rd,"\parbox{1cm}{\scriptsize exterior product}"']\\
      &&\E{\CdR{M}}
    \end{tikzcd}\]
  The triangle does not commute, but it induces quasi-isomorphic exact $BV$-algebra structures on $\CdR{M}$, see the proof of \Cref{cor:trivial_hypercommutative_cohomology_strict}, carried out in \Cref{sec:EBV}. Hence, they restrict to quasi-isomorphic $\P$-algebra structures. This is precisely what we had to prove.
\end{proof}

\begin{corollary}
  Let $(M,\pi)$ be a Poisson manifold:
  \begin{enumerate}
    \item The Lie algebra structure on $\CdR{M}$ given by the Koszul bracket is quasi-isomorphic to the abelian Lie algebra structure.
    \item The Gerstenhaber algebra structure on $\CdR{M}$ given by the Koszul bracket and the exterior product is quasi-isomorphic to the trivial Gerstenhaber algebra structure.
    \item The $BV$-algebra structure on $\CdR{M}$ given by the Lie derivative $\Delta=L_\pi$ and the exterior product is quasi-isomorphic to the trivial $BV$-algebra structure.
  \end{enumerate}
\end{corollary}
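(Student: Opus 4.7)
The plan is to deduce all three items directly from \Cref{cor:P-trivial}, by exhibiting in each case a morphism $\P\to\EBV$ and checking that the resulting restricted structure on $\CdR{M}$ is the one appearing in the statement. The three operads of interest all sit over $\EBV$ via the composite \[\Lie[1]\hookrightarrow\G\longrightarrow\BV\longrightarrow\EBV,\]
where the first two maps are the inclusion and Koszul-bracket arrows of the diagram \eqref{eq:Poisson_diagram}, and the last map is the one constructed in \Cref{lem:BV_to_EBV}, sending $\mu\mapsto\mu$ and $\Delta\mapsto-d(i)$. Each of $\Lie[1]$, $\G$, and $\BV$ is a graded operad concentrated in non-positive degrees, with degree-$0$ suboperads $\Lie[1]^0$ trivial (only the unit) and $\G^0=\BV^0=\C$, so the unique retractions $\P\to\P^0$ of \Cref{def:trivial_algebras} recover exactly the notions of trivial algebra listed in the remark after that definition.

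Next I would check that in each case the $\P$-algebra structure on $\CdR{M}$ obtained from \Cref{prop:Poisson_EBV} by restriction of scalars along $\P\to\EBV$ coincides with the structure named in the statement. For $\P=\BV$, this reduces to the identity $-d(i_\pi)=\Delta$ in $\End{\CdR{M}}$, which is Cartan's formula $L_\pi=[i_\pi,d]$ together with our sign convention for the differential of an endomorphism. Once this is verified, the corresponding Gerstenhaber bracket on $\CdR{M}$ coming from $\G\to\BV$ is by construction the deviation of $\Delta=L_\pi$ from being a derivation, which is precisely the Koszul bracket; and restricting further along $\Lie[1]\hookrightarrow\G$ yields the Koszul bracket alone.

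With these identifications in place, \Cref{cor:P-trivial} applied to each $\P\in\{\Lie[1],\G,\BV\}$ gives directly the three assertions: quasi-isomorphism with the abelian (shifted) Lie algebra, the trivial Gerstenhaber algebra (exterior product with vanishing bracket), and the trivial $BV$-algebra (exterior product with $\Delta=0$), respectively. The bulk of the work has already been carried out in \Cref{thm:Poisson}, \Cref{thm:ideal}, and \Cref{cor:trivial_hypercommutative_cohomology_strict}, so there is no serious obstacle here; the only point requiring care is the bookkeeping of signs and the identification of the restricted structures with the classical ones on the de Rham complex.
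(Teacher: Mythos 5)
Your proposal is correct and follows exactly the route the paper intends: the corollary is stated immediately after \Cref{cor:P-trivial} with no separate proof, precisely because it is the specialization of that result to $\P\in\{\Lie[1],\G,\BV\}$ along the composite $\Lie[1]\hookrightarrow\G\to\BV\to\EBV$ from \eqref{eq:Poisson_diagram} and \Cref{lem:BV_to_EBV}. Your additional checks (the sign identity $-d(i_\pi)=[i_\pi,d]=L_\pi$ and the identification of the restricted bracket with the Koszul bracket) are the right details to verify and are consistent with \Cref{prop:Poisson_EBV}, \Cref{lem:EBV_is_BV}, and the remark following \Cref{def:trivial_algebras}.
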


We can derive from here the Lie formality result in \cite{sharygin_talalaev_2008_lieformality_poisson_manifolds}.

\begin{corollary}\label{cor:talalaev}
  Let $(M,\pi)$ be a Poisson manifold. The Lie algebra structure on $\CdR{M}$ given by the Koszul bracket is formal.
\end{corollary}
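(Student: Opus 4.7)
The plan is to chain two quasi-isomorphisms of $\Lie[1]$-algebras: one provided by the previous corollary, and a second that is essentially a tautology about abelian DG Lie algebras.

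First, I would invoke the previous corollary, which asserts that the $\Lie[1]$-algebra structure on $\CdR{M}$ coming from the Koszul bracket is quasi-isomorphic to the abelian $\Lie[1]$-algebra structure on $\CdR{M}$ (i.e.\ the one with zero bracket). This is the content that came out of \Cref{cor:P-trivial}, specialized to the operad $\P=\Lie[1]$ with its embedding $\Lie[1]\hookrightarrow\G\to\BV\to\BV/\Delta\to\EBV$ appearing in \eqref{eq:Poisson_diagram}.

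Second, I would observe that an abelian DG Lie (or $\Lie[1]$) algebra is automatically formal. Concretely, choose a linear splitting of the de Rham complex $\CdR{M}$ into harmonic part, exact part, and a complement, so that we obtain a cochain quasi-isomorphism (in fact, a strong deformation retraction) between $\CdR{M}$ and its cohomology $\HdR{M}$ viewed as a complex with zero differential. Since both sides carry the zero bracket, any cochain map whatsoever is tautologically a morphism of (abelian) $\Lie[1]$-algebras, hence this cochain quasi-isomorphism is a quasi-isomorphism of abelian $\Lie[1]$-algebras between $\CdR{M}$ with zero bracket and $\HdR{M}$ with zero bracket. The induced Lie bracket on $\HdR{M}$ from the Koszul bracket is indeed zero, as already noted by Koszul and recalled in the introduction, so this zero bracket on $\HdR{M}$ is the correct target.

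Composing the two zigzags yields a zigzag of $\Lie[1]$-algebra quasi-isomorphisms between $\CdR{M}$ with the Koszul bracket and $\HdR{M}$ with the induced (zero) Lie bracket, which is exactly the statement of $\Lie[1]$-formality. No step here is a serious obstacle: the real content is already packaged in \Cref{thm:Poisson} and its corollaries via \Cref{cor:P-trivial}; the only thing to double-check is the bookkeeping that the Koszul bracket on $\CdR{M}$ factors through the composite $\Lie[1]\to\EBV\to\E{\CdR{M}}$, which is visible from the diagram \eqref{eq:Poisson_diagram} since the Koszul bracket arrow from $\Lie[1]$ factors through the Gerstenhaber relation via $\G$ and then through $\BV$ and $\BV/\Delta$.
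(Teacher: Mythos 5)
Your proposal is correct and follows the paper's own argument: reduce to the abelian $\Lie[1]$-algebra structure via the preceding corollary, then note that an abelian structure is formal because any complex over a field is formal and every cochain map is tautologically a morphism of abelian Lie algebras. The extra detail you supply about the splitting and the vanishing of the induced bracket on $\HdR{M}$ is just an expansion of the paper's one-line justification.
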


\begin{proof}
  Since it is quasi-isomorphic to the abelian Lie algebra structure, it suffices to check that the latter is formal, but this is obvious because any complex over a field is formal.
\end{proof}

Another immediate consequence is the following formality \namecref{cor:general_formality}.

\begin{corollary}\label{cor:general_formality}
  Let $(M,\pi)$ be a Poisson manifold, $\P$ a graded operad concentrated in degrees $\leq 0$, $\P^0\subset\P$ its degree $0$ suboperad, and $\P\to\EBV$ an operad morphism. We use it together with \Cref{prop:Poisson_EBV} to equip $\CdR{M}$ with a $\P$-algebra structure. It has an underlying $\P^0$-algebra structure obtained by restriction of scalars along the inclusion of the degree $0$ part $\P^0\subset\P$. The following statements hold:
  \begin{enumerate}
    \item If $M$ is formal then it is also $\P^0$-formal.
    \item $M$ is $\P$-formal if and only if it is $\P^0$-formal.
  \end{enumerate}
\end{corollary}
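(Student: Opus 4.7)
The plan is to reduce both statements to elementary restriction-of-scalars manipulations, building on \Cref{cor:P-trivial}. A preliminary observation I would record first: since $\P$ is concentrated in degrees $\leq 0$ and the degree $0$ part of $\EBV$ is exactly $\C$ (as read off from the generators $\mu, i$ in \Cref{def:EBV_operad}), the DG operad morphism $\P\to\EBV$ restricts in degree $0$ to a morphism $\P^0\to\C$. Consequently the $\P^0$-algebra structure on $\CdR{M}$ factors as $\P^0\to\C\to\End{\CdR{M}}$ through the exterior product.

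For (1), given a zig-zag of commutative DG algebra quasi-isomorphisms between $\CdR{M}$ and $\HdR{M}$ witnessing formality, I would restrict scalars along the morphism $\P^0\to\C$ to obtain a zig-zag of $\P^0$-algebra quasi-isomorphisms, immediately yielding $\P^0$-formality. For the forward direction of (2), $\P$-formality descends to $\P^0$-formality simply by restriction of scalars along the inclusion $\P^0\hookrightarrow\P$.

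The interesting direction is $\P^0$-formal $\Rightarrow$ $\P$-formal. Here I would invoke \Cref{cor:P-trivial}: the $\P$-algebra $\CdR{M}$ is connected, by a zig-zag of $\P$-algebra quasi-isomorphisms, to its trivialization, meaning $\CdR{M}$ equipped with the $\P$-structure obtained from its $\P^0$-structure by restriction along the canonical retraction $\P\twoheadrightarrow\P^0$. Starting from a $\P^0$-formality zig-zag between $\CdR{M}$ and $\HdR{M}$ and applying this same restriction of scalars produces a zig-zag of $\P$-algebras between the trivialized $\CdR{M}$ and the correspondingly trivialized $\HdR{M}$. Concatenating with the zig-zag from \Cref{cor:P-trivial} then links $\CdR{M}$, with its original $\P$-structure, to $\HdR{M}$, establishing $\P$-formality.

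The only point that needs verification is that the $\P$-algebra structure on $\HdR{M}$ appearing at the right end of the composite zig-zag agrees with the one genuinely induced by the $\P$-structure on $\CdR{M}$; this is the nearest thing to an obstacle but is essentially automatic. Any zig-zag of $\P$-algebra quasi-isomorphisms induces a single well-defined $\P$-structure on the common cohomology, and by the preliminary observation both candidate $\P$-structures on $\HdR{M}$ factor through the commutative cohomology ring via $\P\twoheadrightarrow\P^0\to\C$, so they coincide. No genuine technical difficulty is expected.
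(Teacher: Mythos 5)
Your argument is correct and follows exactly the route the paper intends: the paper states \Cref{cor:general_formality} without proof as an ``immediate consequence,'' and the intended derivation is precisely your combination of restriction of scalars along $\P^0\to\EBV^0=\C$ for part (1), restriction along $\P^0\subset\P$ for the forward direction of (2), and \Cref{cor:P-trivial} plus restriction along the retraction $\P\twoheadrightarrow\P^0$ for the converse. Your closing remark is also handled correctly: one only needs the induced $\P$-structure on cohomology to be \emph{isomorphic} to the trivial one (which the zig-zag provides), so the concatenation indeed witnesses $\P$-formality.
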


\begin{corollary}\label{cor:G_BV_formality}
  Let $(M,\pi)$ be a Poisson manifold. The following statements are equivalent: 
  \begin{enumerate}
    \item $M$ is a formal manifold.
          \item\label{it:G_formality} $M$ is $\G$-formal with respect to the Gerstenhaber algebra structure on $\CdR{M}$ given by the exterior product and the Koszul bracket.
    \item $M$ is $\BV$-formal with respect to the $BV$-algebra structure on $\CdR{M}$ given by the exterior product and the Lie derivative $\Delta=L_\pi$.
  \end{enumerate}
\end{corollary}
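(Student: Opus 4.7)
The plan is to derive this \Cref{cor:G_BV_formality} as a direct application of \Cref{cor:general_formality} to $\P=\G$ and to $\P=\BV$. This requires verifying two ingredients: (a) there exist DG operad morphisms $\G\to\EBV$ and $\BV\to\EBV$ whose restriction of scalars from the exact $BV$-algebra of \Cref{prop:Poisson_EBV} recovers the Gerstenhaber and $BV$-algebra structures on $\CdR{M}$ appearing in the statement; and (b) the degree $0$ suboperads $\G^0$ and $\BV^0$ both coincide with $\C$.

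For (a), the morphism $\BV\to\EBV$ is provided by \Cref{lem:BV_to_EBV}. Composing it with the canonical morphism $\G\to\BV$ displayed in \eqref{eq:Poisson_diagram}---which sends the commutative generator to $\mu$ and the Gerstenhaber bracket to the degree $-1$ operation measuring the failure of $\Delta$ to be a derivation---yields a map $\G\to\EBV$. One then unwinds the assignments and applies \Cref{prop:Poisson_BV}, \Cref{prop:Poisson_EBV}, and \Cref{cor:Poisson_factors}: restriction of scalars along $\G\to\EBV$ and $\BV\to\EBV$ from the exact $BV$-algebra on $\CdR{M}$ gives back exactly the stated Gerstenhaber algebra (exterior product plus Koszul bracket) and $BV$-algebra (exterior product plus $\Delta=L_\pi$). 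This amounts to the familiar identities $\Delta=L_\pi=[i_\pi,d]$ and the fact that the Koszul bracket is the derived bracket induced by $L_\pi$.

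For (b), both $\G$ and $\BV$ are generated as graded operads by $\mu$ in degree $0$ together with operations concentrated in strictly negative degrees: the Lie bracket in degree $-1$ for $\G$, and $\Delta$ in degree $-1$ for $\BV$. Any iterated composition involving at least one non-$\mu$ generator therefore lies in strictly negative degree, so the degree $0$ part in each case is precisely the suboperad generated by $\mu$, which is $\C$.

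With (a) and (b) in hand, \Cref{cor:general_formality} immediately gives the equivalences: $M$ is $\G$-formal iff it is $\C$-formal, and $M$ is $\BV$-formal iff it is $\C$-formal; and $\C$-formality is exactly formality of $M$ in the classical sense. The only delicate point, and the place requiring the most care, is ensuring that the composition $\G\hookrightarrow\BV\to\EBV\to\E{\CdR{M}}$ genuinely produces the classical Gerstenhaber structure on $\CdR{M}$ rather than something merely quasi-isomorphic to it; but this reduces to checking signs in the derived bracket formula relating $\Delta=L_\pi$ to the Koszul bracket, and is not a real obstacle.
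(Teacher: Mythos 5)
Your proposal is correct and follows exactly the route the paper intends: the corollary is stated as an immediate consequence of \Cref{cor:general_formality} applied to $\P=\G$ and $\P=\BV$ via the morphisms $\G\to\BV\to\EBV$ from \eqref{eq:Poisson_diagram} and \Cref{lem:BV_to_EBV}, with $\G^0=\BV^0=\C$, which is precisely what you verify. The paper leaves this derivation implicit, so your write-up simply makes explicit the same argument.
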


Notice that \Cref{cor:G_BV_formality} \eqref{it:G_formality} does not follow from \Cref{cor:talalaev} since the former quasi-isomorphism is compatible with the exterior product, but the latter does not take it into account.

\section{Jacobi manifolds}\label{sec:Jacobi}

The following generalization of $BV$-algebras was introduced in \cite{kravchenko_2000_deformations_batalin_vilkovisky}.

\begin{definition}\label{def:commutative_BV_infty_algebra}
  A \emph{commutative $BV_\infty$-algebra} is a DG commutative algebra $A$ equipped with differential operators $\Delta_n\colon A\to A$, $n\geq 1$, of degree $1-2n$ and order $\leq n+1$ such that:
  \[[d,\Delta_n]+\sum_{i=1}^{n-1}\Delta_i\Delta_{n-i}=0,\qquad n\geq 1.\]
\end{definition}

\begin{remark}\label{rem:BV_and_BV_infty}
  A $BV$-algebra is the same as a commutative $BV_\infty$-algebra with $\Delta_n=0$ for $n\geq 2$.  
\end{remark}

Recall that a \emph{Jacobi manifold} $(M,\pi,\eta)$ is a manifold $M$ equipped with a bivector field $\pi\in\Gamma(\Lambda^2T(M))$ and a vector field $\eta\in \Gamma(T(M))$ such that the following relations hold in $\Gamma(\Lambda^*T(M))$:
\[[\pi,\pi]=2\eta\pi,\qquad [\pi,\eta]=0.\]
In particular, a Poisson manifold is a Jacobi manifold with $\eta=0$.

\begin{theorem}[{\cite[Theorem 4.3]{dotsenko_shadrin_vallette_2015_rham_cohomology_homotopy}}]\label{rem:Jacobi_BV_infty}
  The de Rham complex $\CdR{M}$ of a Jacobi manifold is a commutative $BV_\infty$-algebra with the exterior differential, the exterior product,
  \begin{align*}
    \Delta_1 & =L_\pi,                       & 
    \Delta_2 & =-i_\eta i_\pi=-i_{\eta\pi},
  \end{align*}
  and $\Delta_n=0$ for $n\geq 3$.
\end{theorem}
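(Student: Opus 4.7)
The plan is to verify directly the defining axioms of a commutative $BV_\infty$-algebra, using the classical Cartier dictionary \cite{cartier_1994_fundamental_techniques_theory} that translates the Schouten--Nijenhuis calculus for polyvector fields into graded commutators of interior products and Lie derivatives on differential forms. The two identities I would rely on throughout are the generalized Cartan formula $L_P = [i_P, d]$ together with $[L_P, L_Q] = L_{[P,Q]}$ and $[L_P, i_Q] = i_{[P,Q]}$.

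First I would check bidegrees and orders. By Cartan's formula $\Delta_1 = L_\pi = [i_\pi, d]$ has degree $-1 = 1-2$, and it has order $\leq 2$ because $i_\pi$ does (as $\pi$ is a bivector) and commutators with $d$ preserve this bound. The operator $\Delta_2 = -i_{\eta\pi}$ has degree $-3 = 1 - 4$ and order $\leq 3$, since $\eta\pi$ is a trivector.

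Next I would verify the equations $[d,\Delta_n] + \sum_{i=1}^{n-1} \Delta_i \Delta_{n-i} = 0$ case by case. For $n=1$ the equation $[d, L_\pi] = 0$ is immediate from $L_\pi = [i_\pi, d]$ and $d^2 = 0$. For $n=2$, observing that $L_\pi^2 = \tfrac12[L_\pi, L_\pi] = \tfrac12 L_{[\pi,\pi]}$ (since $L_\pi$ is odd) and that $[d, i_{\eta\pi}] = L_{\eta\pi}$, the equation reduces to $-L_{\eta\pi} + \tfrac12 L_{[\pi,\pi]} = 0$, which is exactly the Jacobi condition $[\pi,\pi] = 2\eta\pi$. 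For $n=3$, since $\Delta_3 = 0$, the required relation is the graded commutator identity $[L_\pi, i_{\eta\pi}] = 0$, which by $[L_P, i_Q] = i_{[P,Q]}$ becomes $i_{[\pi, \eta\pi]} = 0$; expanding via the graded Leibniz rule for the Schouten bracket and using $[\pi,\eta]=0$ together with $[\pi,\pi] = 2\eta\pi$ leaves only a term proportional to $\eta \wedge \eta \wedge \pi$, which vanishes because $\eta$ is odd in the exterior algebra of polyvectors. For $n=4$ only $\Delta_2^2$ survives, and it vanishes because $i_{\eta\pi}^2$ equals, up to sign, $i_{(\eta\pi)\wedge(\eta\pi)}$, while $(\eta\pi)\wedge(\eta\pi) = \eta\wedge\eta\wedge\pi\wedge\pi = 0$. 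For $n \geq 5$, every summand $\Delta_i \Delta_{n-i}$ contains some factor $\Delta_k$ with $k \geq 3$, hence vanishes.

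The main obstacle will be careful tracking of Koszul signs in the generalized Cartan and Cartier identities for higher-degree polyvectors, and in the graded Leibniz rule for the Schouten--Nijenhuis bracket. The essential geometric input is concentrated in the two Jacobi conditions $[\pi,\pi] = 2\eta\pi$ and $[\pi,\eta] = 0$, invoked exactly at $n=2$ and $n=3$ respectively; the remaining equations reduce to the odd-square vanishing of $\eta$ in the polyvector algebra, so no further identities are needed beyond $n=4$.
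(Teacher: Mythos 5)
Your verification is correct, and all the individual steps check out: the degree and order counts, the reduction of the $n=2$ equation to $[\pi,\pi]=2\eta\pi$ via $L_\pi^2=\tfrac12 L_{[\pi,\pi]}$ and $[d,i_{\eta\pi}]=L_{\eta\pi}$, the reduction of $n=3$ to $i_{[\pi,\eta\pi]}=0$ using $[\pi,\eta]=0$ and $\eta\wedge\eta=0$, and the vanishing of $\Delta_2^2$ at $n=4$ from $(\eta\pi)\wedge(\eta\pi)=0$. Be aware, though, that the paper does not prove this statement directly: it is quoted from Dotsenko--Shadrin--Vallette (with a sign correction for $\Delta_2$, cf.\ the remark following the theorem), and the paper's own route to the same conclusion is factored into two steps. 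First it shows that $(\CdR{M},i_\pi,i_\eta)$ is a \emph{Jacobi algebra}, which is where all the geometric input (the Cartier identities $[L_P,i_Q]=i_{[P,Q]}$ and the two Jacobi conditions) is concentrated; then it shows, purely algebraically from the Jacobi-algebra axioms, that any Jacobi algebra is a commutative $BV_\infty$-algebra with $\Delta_1=[i,d]$, $\Delta_2=-ji$, $\Delta_n=0$ for $n\ge 3$. Your argument is essentially the composite of these two steps done in one pass: where the paper disposes of $n=3$ and $n=4$ via the abstract commutator identities $[j,[i,d]]=0$, $[i,j]=0$, $j^2=0$, you do so geometrically via the Schouten calculus and the odd-square vanishing of $\eta$. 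Both are valid; the paper's factorization buys the intermediate Jacobi operad $\J$, which is the object actually needed for the main quasi-isomorphism theorem of that section, while your direct computation is more self-contained as a proof of this one statement.
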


\begin{remark}\label{rem:fix_sign}
  The sign of $\Delta_2$ here is opposite to that in \cite[Theorem 4.3]{dotsenko_shadrin_vallette_2015_rham_cohomology_homotopy}. This is due to a sign discrepancy between \cite[Proposition 3.6 and formula (4)]{dotsenko_shadrin_vallette_2015_rham_cohomology_homotopy} and \cite[p.~266]{koszul_1985_crochet_schoutennijenhuis_cohomologie}, see also \cite[\S3]{cartier_1994_fundamental_techniques_theory}.
\end{remark}

We now consider the algebraic structure on the de Rham complex of a Jacobi manifold induced by the interior product with the structure polyvector fields.

\begin{definition}\label{def:Jacobi}
  A \emph{Jacobi algebra} is a DG commutative algebra $A$ equipped with two differential operators: 
  \begin{itemize}
    \item $i\colon A\to A$ of degree $-2$ and order $\leq 2$,
    \item $j\colon A\to A$ of degree $-1$ and order $\leq 1$,
  \end{itemize}
  such that:
  \begin{enumerate}
    \item\label{it:Jacobi_[i,[d,i]]} $[i,[i,d]]=-2ji$.
    \item\label{it:Jacobi_[i,j]} $[i,j]=0$.
    \item\label{it:Jacobi_[[d,i],j]} $[j,[i,d]]=0$.
    \item\label{it:Jacobi_j2} $j^2=0$.
  \end{enumerate}
  The first three equations are equivalent to:
  \begin{enumerate}
    \item $idi-ji=\frac{i^2d+di^2}{2}$.
    \item $ij=ji$.
    \item $jid+idj=jdi+dij$.
  \end{enumerate}
\end{definition}

\begin{remark}\label{rem:EBV_is_Jacobi}
  An exact $BV$-algebra is the same as a Jacobi algebra with $j=0$.
\end{remark}

\begin{proposition}\label{prop:Jacobi_J}
  The de Rham complex $\CdR{M}$ of a Jacobi manifold $(M,\pi,\eta)$ is a Jacobi algebra with the exterior differential, the exterior product, $i=i_\pi$, and $j=i_\eta$.
\end{proposition}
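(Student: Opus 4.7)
The plan is to verify the four relations of \Cref{def:Jacobi} directly, using the standard identities relating the Schouten--Nijenhuis calculus on polyvector fields with graded commutators of interior products and Lie derivatives on forms, as in \cite{cartier_1994_fundamental_techniques_theory}. The basic tools are, for polyvector fields $X,Y$, the identities
\[ i_X i_Y = i_{X\wedge Y},\qquad [L_X, i_Y] = i_{[X,Y]},\qquad L_X = [i_X, d], \]
where all commutators are graded. Since $\pi$ is a bivector field and $\eta$ is a vector field, $i_\pi$ is a differential operator of degree $-2$ and order $\leq 2$, while $i_\eta$ is a degree $-1$ derivation (hence of order $\leq 1$), so the degree and order hypotheses of \Cref{def:Jacobi} are met.

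For the first relation, I would unfold the inner commutator as $L_\pi = [i_\pi, d]$ and compute
\[ [i_\pi, L_\pi] = -[L_\pi, i_\pi] = -i_{[\pi,\pi]} = -i_{2\eta\pi} = -2\, i_\eta i_\pi, \]
where the first equality is graded antisymmetry for operators of degrees $-2$ and $-1$, the second applies the $[L,i]$ formula, and the third uses the first Jacobi identity $[\pi,\pi] = 2\eta\pi$. This yields $[i_\pi, [i_\pi, d]] = -2\, i_\eta i_\pi = -2ji$, which is exactly the required identity, and generalizes the computation in \Cref{prop:Poisson_EBV} to a non-vanishing right-hand side.

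The second relation follows from $i_\pi i_\eta = i_{\pi\wedge\eta} = i_{\eta\wedge\pi} = i_\eta i_\pi$, the middle equality coming from $\pi\wedge\eta = \eta\wedge\pi$ (Koszul sign $(-1)^{2\cdot 1}=+1$); since the Koszul sign for operators of degrees $-2$ and $-1$ is also $+1$, the graded commutator coincides with the plain difference, which vanishes. The third relation $[i_\eta, L_\pi]$ is handled similarly: graded antisymmetry for two odd-degree operators gives $[i_\eta, L_\pi] = [L_\pi, i_\eta]$, and $[L_\pi, i_\eta] = i_{[\pi,\eta]} = 0$ by the second Jacobi identity $[\pi,\eta] = 0$. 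Finally, $j^2 = i_\eta^2 = i_{\eta\wedge\eta} = 0$ because $\eta$ is an odd-degree polyvector, so $\eta\wedge\eta = 0$.

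The only delicate aspect of the argument is the bookkeeping of Koszul signs, explicitly flagged in \Cref{rem:fix_sign}; once the sign convention is fixed to agree with those of \cite{koszul_1985_crochet_schoutennijenhuis_cohomologie} and \cite{cartier_1994_fundamental_techniques_theory}, the four relations follow directly from the two defining Jacobi identities on $(\pi,\eta)$, and no further input is needed.
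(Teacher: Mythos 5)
Your proposal is correct and follows essentially the same route as the paper: both verify the degree and order conditions and then check the four relations of \Cref{def:Jacobi} via the Cartan-calculus identities $L_X=[i_X,d]$, $[L_X,i_Y]=i_{[X,Y]}$, and $i_Xi_Y=i_{X\wedge Y}$ from \cite{cartier_1994_fundamental_techniques_theory}, reducing everything to the defining equations $[\pi,\pi]=2\eta\pi$, $[\pi,\eta]=0$, and graded commutativity of polyvector fields. (Your handling of the sign in $[i_\eta,L_\pi]=[L_\pi,i_\eta]$ for two odd operators is, if anything, slightly more careful than the paper's, though the conclusion is $0$ either way.)
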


\begin{proof}
  Since $\eta$ is a vector field, $i_\eta$ is a differential operator of degree $-1$ and order $\leq 1$, and $i_\pi$ is a differential operator of degree $-2$ and order $\leq 2$ just like in the proof of \Cref{prop:Poisson_EBV}. Moreover, if we likewise use the equations in \cite[\S3]{cartier_1994_fundamental_techniques_theory}, we get  
  \begin{align*}
    [i_\pi,[i_\pi,d]]  & =[i_\pi,L_\pi]=-[L_\pi,i_\pi]=-i_{[\pi,\pi]}=-2i_{\eta\pi}=-2i_\eta i_\pi, \\
    [i_\eta,[i_\pi,d]] & =[i_\eta,L_\pi]=-[L_\pi,i_\eta]=-i_{[\pi,\eta]}=0,
  \end{align*}
  by the equations $[\pi,\pi]=2\eta\pi$ and $[\pi,\eta]=0$, and
  \begin{align*}
    [i_\pi,i_\eta] & =i_{\pi}i_{\eta}-i_{\eta}i_{\pi}=i_{\pi\eta}-i_{\eta\pi}=0, \\
    i_\eta^2       & =i_{\eta^2}=0,
  \end{align*}
  since $\CdR{M}$ is graded commutative.
\end{proof}

\begin{proposition}\label{prop:Jacobi_CBV}
  Any Jacobi algebra is a commutative $BV_\infty$-algebra with $\Delta_1=[i,d]$, $\Delta_2=-ji$, and $\Delta_n=0$ for $n\geq 3$.
\end{proposition}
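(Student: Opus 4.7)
The plan is to verify directly the three requirements of \Cref{def:commutative_BV_infty_algebra}: correct degrees, correct orders, and the sequence of structure equations $[d,\Delta_n]+\sum_{k=1}^{n-1}\Delta_k\Delta_{n-k}=0$. The degrees and orders are immediate bookkeeping: $|{[i,d]}|=-1$ and $|{-ji}|=-3$ match the required $1-2n$, while the graded commutator of a differential operator of order $\leq 2$ with one of order $\leq 1$ has order $\leq 2$, and the composition of operators of orders $\leq 1$ and $\leq 2$ has order $\leq 3$. For $n\geq 3$ the operators $\Delta_n=0$ are trivially of any order. So only the structure equations require real work.

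The equation at $n=1$ is $[d,[i,d]]=0$, immediate from $d^2=0$. For $n=2$, the desired identity is $[i,d]^2=[d,ji]$. I would multiply Jacobi relation \eqref{it:Jacobi_[i,[d,i]]}, written as $i^2d-2idi+di^2=-2ji$, by $d$ once on the right and once on the left and add the two; using $d^2=0$, division by $-2$ produces exactly $idid-di^2d+didi=jid+dji$, which is the required equation ($\Delta_1^2$ on the left, $[d,ji]$ on the right). For $n=3$, the two ingredients are the plain anticommutation $\Delta_1 j=-j\Delta_1$, which is simply relation \eqref{it:Jacobi_[[d,i],j]} unpacked in view of the degrees of $j$ and $\Delta_1$, and the identity $\Delta_1 i-i\Delta_1=2ji$, which is a direct rewriting of relation \eqref{it:Jacobi_[i,[d,i]]}. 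Together they give
\[
\Delta_1\Delta_2+\Delta_2\Delta_1=-\Delta_1 ji-ji\Delta_1=j\Delta_1 i-ji\Delta_1=-j(\Delta_1 i-i\Delta_1)=-2j^2i=0
\]
by relation \eqref{it:Jacobi_j2}. For $n=4$, the only surviving summand is $\Delta_2^2=jiji$, which using $ij=ji$ from relation \eqref{it:Jacobi_[i,j]} becomes $j^2i^2$, zero by relation \eqref{it:Jacobi_j2}. For $n\geq 5$, every summand contains a factor $\Delta_k$ with $k\geq 3$, hence vanishes trivially.

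The only mildly subtle point is the $n=2$ identity, where one must recognize that the symmetric combination of the left and right derivatives of Jacobi relation \eqref{it:Jacobi_[i,[d,i]]} reproduces precisely $\Delta_1^2$ on one side and $[d,ji]$ on the other. All remaining cases follow from a straightforward unwinding of the four defining relations of a Jacobi algebra.
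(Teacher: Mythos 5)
Your proposal is correct and follows essentially the same route as the paper: a direct check of the degrees, the orders, and the structure equations of \Cref{def:commutative_BV_infty_algebra} for $n=1,2,3,4$, using the four Jacobi relations in exactly the way the paper does (the paper phrases $n=2$ and $n=3$ via graded commutators and the Leibniz rule, but the underlying manipulations are identical). The only blemish is a harmless sign slip in your $n=3$ computation, where $j\Delta_1 i - ji\Delta_1$ equals $+j(\Delta_1 i - i\Delta_1) = 2j^2 i$ rather than $-2j^2i$; since $j^2=0$ the conclusion is unaffected.
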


\begin{proof}
  The operator $\Delta_1$ is a degree $-1$ and order $\leq 2$ differential operator like in the proof of \Cref{lem:EBV_is_BV}, and $ji$ is a differential operator of degree $-3$ and order $\leq 3$ because of the degrees and orders of $i$ and $j$ add up to these numbers. Since $\Delta_n=0$ for $n\geq 3$, we only have to check the equation in \Cref{def:commutative_BV_infty_algebra} for $n=1,2,3,4$:
  \begin{enumerate}
    \item $[d,\Delta_1]=[d,[i,d]]=0$ since $[d,-]$ is the differential of the endomorphism DG algebra.
    \item $[d,\Delta_2]+\Delta_1^2=-[d,ji]+[i,d]^2=-dji-jid+idid-di^2d+didi=0$ by \Cref{def:Jacobi} \eqref{it:Jacobi_[i,[d,i]]}.
    \item $\Delta_1\Delta_2+\Delta_2\Delta_1 =[\Delta_1,\Delta_2]=-[[i,d],ji]=-[[i,d],j]i+j[[i,d],i]=2j^2i=0$ by \Cref{def:Jacobi} \eqref{it:Jacobi_[i,[d,i]]}, \eqref{it:Jacobi_[[d,i],j]}, and \eqref{it:Jacobi_j2}.
    \item $\Delta_2^2=jiji=ij^2i=0$ by \Cref{def:Jacobi} \eqref{it:Jacobi_[i,j]} and \eqref{it:Jacobi_j2}.
  \end{enumerate}
\end{proof}

We have a commutative $BV_\infty$-version of \Cref{def:BV/D_algebra}, compare \cite[Theorem 2.1]{dotsenko_shadrin_vallette_2015_rham_cohomology_homotopy}.

\begin{definition}\label{def:CBV/D_algebra}
  A \emph{trivialized commutative $BV_\infty$-algebra} or \emph{commutative $BV_\infty$-algebra with Hodge-to-de-Rham degeneration data} is a commutative  $BV_\infty$-algebra $A$ equipped with graded vector space morphisms $\phi_n\colon A\to A$, $n\geq 1$, of degree $-2n$, such that the following formula holds in $\End{A}[[z]]$,
  \[e^{\phi(z)}de^{-\phi(z)}=d+\sum_{i=n}^\infty\Delta_nz^n,\qquad \phi(z)=\sum_{n=1}^\infty\phi_nz^n.\]
  Equivalently, for each $n\geq 1$,
  \begin{equation}\label{eq:phi_CBV/D}
    \sum_{p=1}^n\frac{1}{p!}\sum_{\substack{q_1+\cdots+q_p=n\\q_1,\dots,q_p\geq 1}}
    [\phi_{q_1},\dots[\phi_{q_{p-1}},[\phi_{q_p},d]]\dots]=\Delta_n.
  \end{equation}
\end{definition}

\begin{remark}\label{rem:BV/D_is_CBV/D}
  A trivialized $BV$-algebra is the same as a trivialized commutative  $BV_\infty$-algebra with $\Delta_n=0$ for $n\geq 2$.
\end{remark}

\begin{theorem}[{\cite[Theorems 2.1 and 4.5]{dotsenko_shadrin_vallette_2015_rham_cohomology_homotopy}}]\label{thm:Jacobi_CBV/D}
  The de Rham complex $\CdR{M}$ of a Jacobi manifold $(M,\pi,\eta)$ is a trivialized commutative $BV_\infty$-algebra with the exterior differential, the exterior product, and
  \begin{align*}
    \Delta_1 & =L_\pi,                      & 
    \Delta_2 & =-i_\eta i_\pi=-i_{\eta\pi}, & 
    \phi_1   & =i_\pi.
  \end{align*}
  The remaining operations are zero.
\end{theorem}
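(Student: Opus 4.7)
The plan is to factor the proof through a general algebraic lemma that mirrors the route taken for Poisson manifolds via \Cref{lem:EBV_is_BV/D}: every Jacobi algebra $A$ carries a canonical trivialized commutative $BV_\infty$-algebra structure with $\phi_1 = i$ and $\phi_n = 0$ for $n \geq 2$, extending the commutative $BV_\infty$-structure supplied by \Cref{prop:Jacobi_CBV}. The theorem then follows by applying this lemma to the Jacobi algebra $\CdR{M}$ exhibited in \Cref{prop:Jacobi_J}, where $i = i_\pi$ and $j = i_\eta$; the resulting values $\Delta_1 = L_\pi$ and $\Delta_2 = -i_\eta i_\pi$ agree with those of \Cref{rem:Jacobi_BV_infty}.

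For the general lemma, the underlying commutative $BV_\infty$-structure is already in place, so only equation \eqref{eq:phi_CBV/D} needs to be checked. The case $n = 1$ is the tautology $[\phi_1, d] = [i, d] = \Delta_1$. For $n = 2$, since $\phi_2 = 0$, the equation reduces to $\tfrac{1}{2}[i, [i, d]] = \Delta_2 = -ji$, which is precisely the Jacobi axiom \Cref{def:Jacobi} \eqref{it:Jacobi_[i,[d,i]]}.

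For $n \geq 3$ the right-hand side $\Delta_n$ vanishes. Since $\phi_k = 0$ for $k \geq 2$, every summand on the left involving some $\phi_{q_k}$ with $q_k \geq 2$ vanishes automatically, leaving only the contribution $\tfrac{1}{n!}[i, [i, \dots, [i, d]\dots]]$ with $n$ nested copies of $i$. It therefore suffices to show that the triple-nested bracket $[i, [i, [i, d]]]$ is already zero, since this forces all longer nestings to collapse. By the Jacobi axiom, $[i, [i, d]] = -2ji$, so the task reduces to checking $[i, ji] = 0$. A direct graded commutator computation — using $|i| = -2$, $|j| = -1$, so that $|ji| = -3$ and the Koszul sign $(-1)^{|i|\cdot|ji|} = (-1)^{6} = +1$, together with the identity $ij = ji$ coming from \Cref{def:Jacobi} \eqref{it:Jacobi_[i,j]} (whose Koszul sign is also trivial) — yields $[i, ji] = iji - ji^2 = ji^2 - ji^2 = 0$.

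The main obstacle is the careful bookkeeping of Koszul signs in the nested graded commutators; once the sign analysis is correct, the verification is short and the axioms of \Cref{def:Jacobi} apply directly. Note that \Cref{def:Jacobi} \eqref{it:Jacobi_[[d,i],j]} and \eqref{it:Jacobi_j2} are not explicitly needed in verifying \eqref{eq:phi_CBV/D}, but they were already used in \Cref{prop:Jacobi_CBV} to ensure that the $\Delta_n$ satisfy the commutative $BV_\infty$ relations.
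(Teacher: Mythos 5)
Your proposal is correct and follows essentially the same route as the paper: the ``general algebraic lemma'' you describe is exactly \Cref{prop:Jacobi_CBV/D} (building on \Cref{prop:Jacobi_CBV}), applied to the Jacobi algebra structure of \Cref{prop:Jacobi_J}, which is precisely how the paper recovers the statement in \Cref{cor:Jacobi_factors}. Your verification of \eqref{eq:phi_CBV/D} — the $n=1$ tautology, the $n=2$ case reducing to $[i,[i,d]]=-2ji$, and the vanishing of $[i,[i,[i,d]]]=-2[i,ji]$ via $[i,j]=0$ — matches the paper's computation, including the sign bookkeeping.
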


See \Cref{rem:fix_sign} for signs.

\begin{proposition}\label{prop:Jacobi_CBV/D}
  Any Jacobi algebra is a trivialized commutative $BV_\infty$-algebra with $\Delta_1=[i,d]$, $\Delta_2=-ji$, $\Delta_n=0$ for $n\geq 3$, $\phi_1=i$, and $\phi_n=0$ for $n\geq 2$.
\end{proposition}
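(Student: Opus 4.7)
The plan is to verify the defining identity \eqref{eq:phi_CBV/D} of \Cref{def:CBV/D_algebra} directly; the underlying commutative $BV_\infty$-algebra structure has already been established in \Cref{prop:Jacobi_CBV}, so only the Hodge-to-de-Rham degeneration data relations remain to be checked.

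First I would collapse the sum on the left-hand side of \eqref{eq:phi_CBV/D} using the hypothesis that $\phi_m = 0$ for $m \geq 2$. In a summand indexed by $p$ and $(q_1,\dots,q_p)$, the nested commutator contains a factor $\phi_{q_j}$ for each $j$, so it vanishes unless every $q_j = 1$; combined with $q_1+\cdots+q_p = n$ and $q_j \geq 1$, the only surviving term has $p = n$ and all $q_j = 1$. Writing $\mathrm{ad}(i)(X) = [i,X]$ for the graded commutator in $\End{A}$, the identity \eqref{eq:phi_CBV/D} therefore reduces, for each $n \geq 1$, to
\begin{equation*}
  \mathrm{ad}(i)^n(d) = n!\,\Delta_n.
\end{equation*}

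Next I would verify this reduced identity. The case $n=1$ is the definition $\Delta_1 = [i,d]$. The case $n=2$ asserts $[i,[i,d]] = -2ji = 2\Delta_2$, which is precisely \Cref{def:Jacobi}~\eqref{it:Jacobi_[i,[d,i]]}. For $n=3$ I would expand, using the graded Leibniz rule for the commutator applied to the composition $ji \in \End{A}$,
\begin{equation*}
  \mathrm{ad}(i)^3(d) = [i,-2ji] = -2\bigl([i,j]\,i + (-1)^{|i||j|}\,j\,[i,i]\bigr).
\end{equation*}
The first summand vanishes by \Cref{def:Jacobi}~\eqref{it:Jacobi_[i,j]}, and $[i,i]$ vanishes because $|i|=-2$ is even, so $[i,i] = i^2 - i^2 = 0$; hence $\mathrm{ad}(i)^3(d) = 0 = 3!\,\Delta_3$. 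The cases $n \geq 4$ follow immediately by applying further powers of $\mathrm{ad}(i)$ to zero.

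No step is genuinely hard; the main thing to watch is the bookkeeping of Koszul signs in the graded Leibniz rule for $\mathrm{ad}(i)$ on compositions such as $ji$. Once $\mathrm{ad}(i)^3(d) = 0$ is established, the prescribed values $\Delta_n = 0$ for $n \geq 3$ are forced, which confirms that the choice $\phi_n = 0$ for $n \geq 2$ is consistent with the trivialization equations.
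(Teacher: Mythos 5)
Your proposal is correct and follows essentially the same route as the paper: both reduce \eqref{eq:phi_CBV/D} to the single surviving nested commutator $\operatorname{ad}(i)^n(d)=n!\,\Delta_n$, identify the cases $n=1,2$ with the definition of $\Delta_1$ and with \Cref{def:Jacobi}~\eqref{it:Jacobi_[i,[d,i]]}, and kill $n\geq 3$ via $[i,[i,[i,d]]]=-2[i,ji]=-2[i,j]i-2j[i,i]=0$ using \Cref{def:Jacobi}~\eqref{it:Jacobi_[i,j]} and the evenness of $|i|$. No discrepancies to report.
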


\begin{proof}
  After \Cref{prop:Jacobi_CBV}, we only need to check \eqref{eq:phi_CBV/D}. Since the only non-vanishing $\Delta_n$ and $\phi_n$ are $\Delta_1$, $\Delta_2$, and $\phi_1$, this reduces to $[i,d]=\Delta_1$, which holds by definition, $\frac{1}{2}[i,[i,d]]=-ji$, which is \Cref{def:Jacobi} \eqref{it:Jacobi_[i,[d,i]]}, and $[i,\dots,[i,[i,d]]]=0$, which is a consequence of 
  \begin{align*}
    [i,[i,[i,d]]] & =-2[i,ji]         \\
                  & =-2[i,j]i-2j[i,i] \\
                  & =0.
  \end{align*}
  Here we use again \Cref{def:Jacobi} \eqref{it:Jacobi_[i,[d,i]]} and \eqref{it:Jacobi_[i,j]}.
\end{proof}

\begin{corollary}\label{cor:Jacobi_factors}
  The trivialized commutative $BV_\infty$-algebra structure on the de Rham complex $\CdR{M}$ of a Jacobi manifold $(M,\pi,\eta)$ (\Cref{thm:Jacobi_CBV/D}) is determined by its Jacobi algebra structure (\Cref{prop:Jacobi_J}) and \Cref{prop:Jacobi_CBV/D}.
\end{corollary}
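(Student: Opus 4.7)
The plan is simply to verify that the operations of the trivialized commutative $BV_\infty$-algebra structure described in \Cref{thm:Jacobi_CBV/D} coincide on the nose with those produced by applying the universal recipe of \Cref{prop:Jacobi_CBV/D} to the Jacobi algebra of \Cref{prop:Jacobi_J}. Both constructions equip $\CdR{M}$ with the same underlying DG commutative algebra $(\CdR{M}, d, \wedge)$, so the only content is to compare the extra operators $\Delta_n$ and $\phi_n$ on the two sides.

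First I would unpack \Cref{prop:Jacobi_CBV/D} in the case $i = i_\pi$, $j = i_\eta$ given by \Cref{prop:Jacobi_J}. This immediately yields $\Delta_1 = [i_\pi, d]$, $\Delta_2 = -i_\eta i_\pi$, $\Delta_n = 0$ for $n \geq 3$, $\phi_1 = i_\pi$, and $\phi_n = 0$ for $n \geq 2$. Except for the expression of $\Delta_1$, these match the values listed in \Cref{thm:Jacobi_CBV/D} verbatim (including the sign convention fixed in \Cref{rem:fix_sign}).

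The only remaining point is to identify $[i_\pi, d] = i_\pi d - d i_\pi$ with the Lie derivative $L_\pi$; this is precisely Cartan's homotopy formula as recalled in the Introduction. Together with the preceding identifications, this shows that the two trivialized commutative $BV_\infty$-structures agree, which is what is claimed.

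I do not expect a genuine obstacle: the corollary is a bookkeeping statement asserting that the trivialization data of \Cref{thm:Jacobi_CBV/D} is already encoded in the Jacobi algebra structure of \Cref{prop:Jacobi_J}, since the axioms of \Cref{def:Jacobi} are exactly what \Cref{prop:Jacobi_CBV/D} needs in order to extract a trivialized commutative $BV_\infty$-algebra through the universal formulas $\Delta_1 = [i,d]$, $\Delta_2 = -ji$, $\phi_1 = i$, and vanishing of the remaining operations.
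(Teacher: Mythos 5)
Your proof is correct and matches the paper's intended argument: the corollary is stated without proof precisely because it is the immediate bookkeeping check you describe, namely that specializing \Cref{prop:Jacobi_CBV/D} to $i=i_\pi$, $j=i_\eta$ reproduces the data of \Cref{thm:Jacobi_CBV/D}, with $[i_\pi,d]=i_\pi d-di_\pi=L_\pi$ supplied by Cartan's formula.
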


We now consider the operads corresponding to the new classes of algebras.

\begin{definition}\label{def:CBV_operad}
  The \emph{commutative $BV_\infty$ operad} $\CBV$ is the DG operad generated by $\mu\in\CBV(2)^0$ and $\Delta_n\in\CBV(1)^{1-2n}$, $n\geq 1$, with 
  \[\mu\cdot (1\ 2)=\mu\]
  subject to the relations:
  \begin{enumerate}
    \item $d(\mu)=0$.
    \item $\mu\circ_1\mu-\mu\circ_2\mu=0$.
          \item\label{it:dD_n} $d(\Delta_n)+\sum_{i=1}^{n-1}\Delta_i\Delta_{n-i}=0$, $n\geq 1$,
          \item\label{it:D_n_diff_op} $\sum_{\substack{p+q=n+1\\p\geq1}}(-1)^p\sum_{\sigma\in\shuffle{p}{q}}(\mu^{q+1}\circ_1(\Delta_n\mu^{p}))\cdot\sigma^{-1}=0$, $n\geq 1$.
  \end{enumerate}
  Here \eqref{it:dD_n} corresponds to the equation in \Cref{def:commutative_BV_infty_algebra} and \eqref{it:D_n_diff_op} is the equation in \Cref{def:differential_operator}. 
  This operad is not a minimal resolution of another one, it is not even cofibrant, but we still denote it by $\CBV$. Like in \Cref{def:BV_operad}, the commutative operad $\C\subset\CBV$ is the suboperad generated by $\mu$.
\end{definition}

\begin{proposition}\label{prop:CBV_to_BV}
  There is a morphism of DG operads $\CBV\to\BV$ given by $\mu\mapsto\mu$, $\Delta_1\mapsto\Delta$, and $\Delta_n\mapsto 0$ for $n\geq 2$. Moreover, this morphism is a quasi-isomorphism.
\end{proposition}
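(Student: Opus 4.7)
The plan is to verify the map is well-defined as a morphism of DG operads, then to establish it is a quasi-isomorphism by adapting the technique of \Cref{thm:ideal}.

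For well-definedness, under the assignment $\mu \mapsto \mu$, $\Delta_1 \mapsto \Delta$, $\Delta_n \mapsto 0$ for $n \geq 2$, I check the defining relations of $\CBV$. The differential operator relations for $\Delta_n$ with $n \geq 2$ become trivial $0 = 0$, while for $\Delta_1$ the relation becomes the order-$\leq 2$ differential operator relation for $\Delta$ in $\BV$. The Maurer--Cartan relation $d(\Delta_n) + \sum_{i+j=n,\,i,j \geq 1} \Delta_i \Delta_j = 0$ gives $d(\Delta) = 0$ at $n=1$, reduces to $\Delta^2 = 0$ at $n=2$ (which holds in $\BV$), and is the trivial identity $0 = 0$ for $n \geq 3$ since every summand $\Delta_i \Delta_j$ with $i+j \geq 3$ contains at least one factor $\Delta_k$ with $k \geq 2$.

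For the quasi-isomorphism, mimicking the proof of \Cref{thm:Poisson}, I present $\CBV$ as a quotient of a quasi-free extension of $\BV$. Let $C$ be the $\sym$-module concentrated in arity $1$ with basis $\{\Delta_n, D_n : n \geq 2\}$ in degrees $1-2n$ and $2-2n$, equipped with the contracting SDR $h(D_n) = \Delta_n$. Form $\P = \BV \amalg \F(C)$ with differential extending that of $\BV$ and satisfying $d(\Delta_n) = D_n$. The natural surjection $\P \twoheadrightarrow \CBV$ sends $D_n$ to $-\sum_{i+j=n} \Delta_i \Delta_j$ (with the convention $\Delta_1 := \Delta \in \BV$), and its kernel is generated by the relations $R_n = D_n + \sum \Delta_i \Delta_j$ together with the differential operator relations for the new $\Delta_n$.

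The main obstacle will be verifying the hypothesis $h_C(S) \subset I$ of \Cref{thm:ideal}: computing $h_C(R_n)$ via the composition formula of \Cref{thm:operad_SDR}(v), one finds $h_C(R_n) = \Delta_n + 0$, which is not in the ideal $I$ generated by the $R_n$ and the differential operator relations alone, so the naive setup fails. The resolution will likely involve a refined weighting — for instance the $z$-weight that assigns $\Delta_n$ weight $n$, under which the Maurer--Cartan relations become homogeneous — together with a reduction to a $z$-weight-by-$z$-weight check. At $z$-weight $k \in \{0, 1\}$ the map is an isomorphism; at $z$-weight $k \geq 2$ and arity $1$, the $\BV$-side vanishes (by $\Delta^k = 0$) while the $\CBV$-side is the classical Tate-style resolution of $k[\Delta]/\Delta^2$ spanned by ordered compositions of $k$ with the ``split-one-part'' differential, contractible via an explicit ``merging'' homotopy, and acyclicity at higher arities should propagate from arity $1$ through the differential operator structure encoding the interaction between $\C$ and the arity-$1$ algebra of operators.
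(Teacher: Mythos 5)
Your well-definedness check is correct and is exactly the operadic counterpart of \Cref{rem:BV_and_BV_infty}; the paper treats this part as immediate. The issue is the quasi-isomorphism claim. The paper does not prove it by the SDR/ideal machinery at all: it simply invokes \cite[Theorem 5.3.1]{campos_merkulov_willwacher_2016_frobenius_properad_koszul}, which computes $H^*(\CBV)=\BV$ (a nontrivial Koszulness-type result for the Frobenius properad). Note that \Cref{thm:ideal} cannot apply here in the form used for $\EBV$ and $\J$: there the base is $\C$ and the added generators form an acyclic complex, whereas $\BV\rightarrowtail\CBV$ is a genuine cell attachment ($d(\Delta_n)=-\sum\Delta_i\Delta_{n-i}$ is a nonzero decomposable, so $\Delta_1^2$ is exact but not zero in $\CBV$), i.e.\ a standard cofibration rather than a standard acyclic one. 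You correctly diagnose this: in your presentation $\P=\BV\amalg\F(C)$ one gets $h_C(R_n)=\Delta_n\notin I$, so the hypothesis $h_C(S)\subset I$ fails, and no choice of contracting homotopy on $C$ can repair it.

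The gap is in your proposed rescue. The arity-one computation is fine (in $z$-weight $k\ge 2$ the span of words $\Delta_{q_1}\cdots\Delta_{q_p}$ with $\sum q_i=k$ and the splitting differential is the augmented cochain complex of a simplex on the $k-1$ cut points, hence acyclic), but the assertion that ``acyclicity at higher arities should propagate from arity $1$ through the differential operator structure'' is not an argument. In arity $n\ge 2$ the order-$\le(q+1)$ relations entangle each $\Delta_q$ with $\mu$ in a way that does not reduce $\CBV(n)$ to a tensor or induced construction over $\CBV(1)$, and controlling $H^*(\CBV(n))$ there is precisely the content of the cited theorem of Campos--Merkulov--Willwacher. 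As written, your proof establishes the existence of the morphism but not that it is a quasi-isomorphism; you should either carry out the higher-arity acyclicity argument in detail or, as the paper does, cite \cite[Theorem 5.3.1]{campos_merkulov_willwacher_2016_frobenius_properad_koszul}.
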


This is the operadic counterpart of \Cref{rem:BV_and_BV_infty}. It is a quasi-isomorphism by \cite[Theorem 5.3.1]{campos_merkulov_willwacher_2016_frobenius_properad_koszul}.

\begin{definition}
  The \emph{Jacobi operad} $\J$ is the DG operad generated by $\mu\in\J(2)^0$, $i\in\J(1)^{-2}$ and $j\in\J(1)^{-1}$ with 
  \[\mu\cdot (1\ 2)=\mu\]
  subject to the relations:
  \begin{enumerate}
    \item $d(\mu)=0$.
    \item $\mu\circ_1\mu=\mu\circ_2\mu$.
          \item\label{it:Jacobi_commutator} $id(i)=d(i)i+2ji$.
    \item $ij=ji$.
    \item $d(i)j+jd(i)=0$.
          \item\label{it:Jacobi_j2_operad} $j^2=0$.
          \item\label{it:j_order_1} $(\mu\circ_1j)\cdot[()+(1\ 2)]=j\mu$.
          \item\label{it:i_order_2} $(\mu^2\circ_1i)\cdot[()+(1\ 2)+(1\ 2\ 3)]+i\mu^2=\mu\circ_1(i\mu)\cdot[()+(2\ 3)+(1\ 3\ 2)]$.
  \end{enumerate}
  Here \eqref{it:Jacobi_commutator}--\eqref{it:Jacobi_j2_operad} correspond to relations \eqref{it:Jacobi_[i,[d,i]]}--\eqref{it:Jacobi_j2} in \Cref{def:Jacobi}, and \eqref{it:j_order_1} and \eqref{it:i_order_2} are \Cref{def:differential_operator} for $n=1,2$, respectively. 
  Like in \Cref{def:EBV_operad}, the DG suboperad generated by $\mu$ is the commutative operad $\C\subset\J$.
\end{definition}

\begin{proposition}\label{prop:Jacobi_to_EBV}
  There is a morphism of DG operads $\J\to\EBV$ given by $\mu\mapsto\mu$, $i\mapsto i$, and $j\mapsto 0$.
\end{proposition}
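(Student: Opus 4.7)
The plan is to verify directly that the proposed assignment on generators respects every defining relation of $\J$, producing a well-defined DG operad morphism. Since $\J$ is presented as a quotient of the free DG operad on the generators $\mu$, $i$, $j$ by the listed relations, it suffices to check that each relation of $\J$ is mapped to a relation (or to a trivial identity) in $\EBV$ under the substitution $\mu\mapsto\mu$, $i\mapsto i$, $j\mapsto 0$. The degrees and symmetries of the generators are preserved ($\mu$ is a degree $0$ symmetric binary operation in both operads, $i$ is degree $-2$, and $0$ has every degree), so we only need to treat the relations.

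I would go through the eight defining relations of $\J$ in order. The first two relations, $d(\mu)=0$ and $\mu\circ_1\mu=\mu\circ_2\mu$, are part of the presentation of $\EBV$ as well, so they are immediate. Relation \eqref{it:Jacobi_commutator}, $id(i)=d(i)i+2ji$, becomes $id(i)=d(i)i$ after setting $j=0$, which is precisely relation \eqref{it:EBV_commutativity} of \Cref{def:EBV_operad}. The three relations $ij=ji$, $d(i)j+jd(i)=0$, and $j^2=0$ all contain $j$ as a factor in every term and therefore become the tautology $0=0$. The same happens for relation \eqref{it:j_order_1}, $(\mu\circ_1j)\cdot[()+(1\ 2)]=j\mu$, which reduces to $0=0$. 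Finally, relation \eqref{it:i_order_2} is formally identical to relation \eqref{it:EBV_differential operator} of \Cref{def:EBV_operad}, since it involves only $\mu$ and $i$ and is preserved verbatim.

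There is no genuine obstacle here: the morphism is defined on generators, the degree assignments match, and each relation is either sent to a relation of $\EBV$ or to a trivial identity because $j$ maps to $0$. Hence the morphism $\J\to\EBV$ is well defined, and it is automatically compatible with the differential since $d(\mu)$, $d(i)$, and $d(j)=0$ are all sent to elements whose images agree (the only non-trivial case is $d(i)\mapsto d(i)$, which is by construction). This completes the verification.
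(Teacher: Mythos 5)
Your verification is correct and matches the paper's (implicit) argument: the paper states this as the operadic counterpart of \Cref{rem:EBV_is_Jacobi} (an exact $BV$-algebra is a Jacobi algebra with $j=0$) and leaves the relation-by-relation check to the reader, which is exactly what you have carried out. Nothing is missing.
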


This is the operad version of \Cref{rem:EBV_is_Jacobi}. The following \namecref{lem:CBV_to_J} is the operadic counterpart of \Cref{prop:Jacobi_CBV}.

\begin{proposition}\label{lem:CBV_to_J}
  There is a morphism of DG operads $\CBV\to\J$ given by $\mu\mapsto\mu$, $\Delta_1\mapsto -d(i)$, $\Delta_2\mapsto -ji$, and $\Delta_n\mapsto 0$ for $n\geq 3$.
\end{proposition}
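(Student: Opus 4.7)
The plan is to define the map on generators as in the statement and check that all defining relations of $\CBV$ hold in $\J$ under this assignment. The image is determined by $\mu \mapsto \mu$, $\Delta_1 \mapsto -d(i)$, $\Delta_2 \mapsto -ji$, and $\Delta_n \mapsto 0$ for $n \geq 3$; the degrees and arities match, and the symmetry condition $\mu \cdot (1\ 2) = \mu$ is preserved tautologically. The content lies in verifying the two families of relations in \Cref{def:CBV_operad}: the quadratic differential relations \eqref{it:dD_n} and the higher-order differential operator relations \eqref{it:D_n_diff_op}.

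I would verify the differential relations case by case, in each case mirroring the algebraic computation in the proof of \Cref{prop:Jacobi_CBV}. For $n = 1$, the relation reduces to $d^2(i) = 0$. For $n = 2$, unpacking the graded Leibniz rule for the operadic differential, the condition $d(-ji) + (-d(i))^2 = 0$ is the operadic counterpart of the computation that used \Cref{def:Jacobi} \eqref{it:Jacobi_[i,[d,i]]}, i.e.\ the relation $id(i) = d(i)i + 2ji$ in $\J$. For $n = 3$, since $\Delta_3 \mapsto 0$, the required identity is $[\Delta_1, \Delta_2] = 0$, which via the Leibniz rule for the graded commutator splits into two summands that vanish by the operadic versions of \Cref{def:Jacobi} \eqref{it:Jacobi_[[d,i],j]}, \eqref{it:Jacobi_[i,[d,i]]} and \eqref{it:Jacobi_j2}. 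For $n = 4$, only $\Delta_2^2 \mapsto jiji$ survives, and this vanishes using $ij = ji$ and $j^2 = 0$. For $n \geq 5$, every summand $\Delta_k \Delta_{n-k}$ has a factor $\Delta_m$ with $m \geq 3$, hence maps to zero.

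For the order relations, the image of $\Delta_1$ is $-d(i)$; applying the operadic differential to relation \eqref{it:i_order_2} of $\J$ and using $d(\mu) = 0$ directly produces the order $\leq 2$ equation for $d(i)$. For $\Delta_2 \mapsto -ji$ one needs the order $\leq 3$ equation; this is the general fact that the composition of an order $\leq 1$ differential operator and an order $\leq 2$ one is of order $\leq 3$, applied to $j$ (order $\leq 1$ by relation \eqref{it:j_order_1}) and $i$ (order $\leq 2$ by relation \eqref{it:i_order_2}). The cases $n \geq 3$ are trivial since $\Delta_n \mapsto 0$. The main obstacle, if any, is the combinatorial expansion of the shuffle sums in the order $\leq 3$ equation for $ji$; but this is a standard algebraic identity about compositions of differential operators that can be verified within the free DG operad, independently of $\J$.
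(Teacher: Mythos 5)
Your proposal is correct and matches the paper's (implicit) argument: the paper states this proposition without proof as ``the operadic counterpart of \Cref{prop:Jacobi_CBV}'', and your verification of relations \eqref{it:dD_n} for $n=1,2,3,4$ (and trivially for $n\geq 5$) is exactly the operadic transcription of the computations in the proof of that proposition, while your treatment of the order relations \eqref{it:D_n_diff_op} --- differentiating the order~$\leq 2$ relation for $i$ to get it for $d(i)$, and using additivity of orders under composition for $ji$ --- mirrors the paper's appeal to the same facts at the algebra level. No gaps.
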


The operad for trivialized commutative $BV_\infty$-algebras (\Cref{def:CBV/D_algebra}) is the following one.

\begin{definition}\label{def:trivialized_CBV_operad}
  The \emph{trivialized commutative $BV_\infty$ operad} $\CBV/\Delta$ is the DG operad generated by $\mu\in(\CBV/\Delta)(2)^0$, $\Delta_n\in(\CBV/\Delta)(1)^{1-2n}$, and $\phi_n\in(\CBV/\Delta)(1)^{-2n}$, $n\geq 1$, with 
  \[\mu\cdot (1\ 2)=\mu\]
  subject to the relations in \Cref{def:CBV_operad} and 
  \[d(\phi_n)+\sum_{p=2}^n\frac{1}{p!}\sum_{\substack{q_1+\cdots+q_p=n\\q_1,\dots,q_p\geq 1}}
    [\phi_{q_1},\dots[\phi_{q_{p-1}},d(\phi_{q_p})]\dots]=-\Delta_n,\qquad n\geq 1.\]
  This relation corresponds to \eqref{eq:phi_CBV/D}. 
  Clearly, $\CBV\subset\CBV/\Delta$ is a suboperad. This inclusion $\CBV\rightarrowtail \CBV/\Delta$ is a cofibration in $\Operads$. Indeed, it is a standard cofibration in the sense of \cite[\S6.4]{hinich_1997_homological_algebra_homotopy}.
\end{definition}

\begin{proposition}\label{prop:CBV/D_to_BV/D}
  There is a DG operad morphism $\CBV/\Delta\to\BV/\Delta$ given by $\mu\mapsto\mu$, $\Delta_1\mapsto\Delta$, $\Delta_n\mapsto 0$ for $n\geq 2$, and $\phi_n\mapsto\phi_n$ for $n\geq 1$. Moreover, this morphism is a quasi-isomorphism.
\end{proposition}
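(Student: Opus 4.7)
The plan is to realise the morphism as the one induced on a pushout in $\Operads$ and to invoke left properness of Hinich's model structure. By \Cref{def:trivialized_CBV_operad} and \Cref{def:BV/D_operad} the inclusions $\CBV\hookrightarrow\CBV/\Delta$ and $\BV\hookrightarrow\BV/\Delta$ are standard cofibrations, and by \Cref{prop:CBV_to_BV} the morphism $f\colon \CBV\to\BV$ is a quasi-isomorphism. Together with the morphism in the statement these fit into a commutative square
\[
\begin{tikzcd}[ampersand replacement=\&]
\CBV \arrow[r, "f"] \arrow[d, hook] \& \BV \arrow[d, hook] \\
\CBV/\Delta \arrow[r] \& \BV/\Delta
\end{tikzcd}
\]
which I claim is a pushout.

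To verify the pushout I would describe $\BV\amalg_{\CBV}\CBV/\Delta$ by generators and relations. Identifying the images of the generators of $\CBV$ forces the substitutions $\Delta_1^{\CBV/\Delta}=\Delta$ and $\Delta_n^{\CBV/\Delta}=0$ for $n\geq 2$, under which every defining relation of $\CBV/\Delta$ collapses to a defining relation of $\BV/\Delta$: the equations $d(\Delta_n)+\sum \Delta_i\Delta_{n-i}=0$ either recover $d(\Delta)=0$ and $\Delta^2=0$ or become trivial; the differential-operator conditions for $\Delta_n$ with $n\geq 2$ become vacuous; and the relation $d(\phi_n)=-\Delta_n-\sum_{p\geq 2}\tfrac{1}{p!}\sum[\phi_{q_1},\ldots,d(\phi_{q_p})]$ reduces to the defining relation of $\phi_n$ in $\BV/\Delta$. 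Hence the pushout is $\BV/\Delta$ and the induced map out of $\CBV/\Delta$ coincides with the one in the statement.

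With the pushout identification in hand, left properness of Hinich's model structure on DG operads in characteristic zero then gives that the pushout of the quasi-isomorphism $f$ along the standard cofibration $\CBV\hookrightarrow\CBV/\Delta$ is a quasi-isomorphism, which proves the claim. The delicate point is precisely the appeal to left properness; as a self-contained backup, one may instead exploit the weight grading on both operads in which $\mu$ has weight $0$ and $\phi_n$, $\Delta_n$ have weight $n$. This grading is preserved both by the differentials and by the morphism in question, so a weight-by-weight filtration/spectral sequence argument should reduce the comparison to \Cref{prop:CBV_to_BV} together with the acyclicity of the Koszul-like pair $(\phi_n,\Delta_n)$ for each $n\geq 2$.
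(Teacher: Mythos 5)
Your proof follows essentially the same route as the paper: the paper also exhibits the morphism as the map induced on the pushout of $\BV\leftarrow\CBV\rightarrow\CBV/\Delta$ along the standard cofibrations of \Cref{def:BV/D_operad,def:trivialized_CBV_operad} and then concludes by left properness of $\Operads$ (citing Hackney--Robertson--Yau for that property rather than attributing it to Hinich). Your explicit check of the pushout via presentations and the suggested weight-filtration backup are sound additions, but the core argument is identical.
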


\begin{proof}
  The morphism is the operadic version of \Cref{rem:BV/D_is_CBV/D}. We have a commutative square 
  \begin{center}
    \begin{tikzcd}
      \CBV\arrow[r,"\sim"]\arrow[tail,d] & \BV\arrow[tail,d] \\
      \CBV/\Delta\arrow[r]                  & \BV/\Delta
    \end{tikzcd}
  \end{center}
  where the bottom horizontal arrow is the map in the statement, the top horizontal arrow is the quasi-isomorphism in \Cref{prop:CBV_to_BV}, and the vertical arrows are the inclusions in \Cref{def:BV/D_operad,def:trivialized_CBV_operad}, which are cofibrations in $\Operads$. The square is clearly a push-out in $\Operads$, it suffices to look at the presentations in these definitions. The model category $\Operads$ is left proper by \cite[Theorem 3.1.10]{hackney_robertson_yau_2016_relative_left_properness}, hence the bottom horizontal map is also a quasi-isomorphism.
\end{proof}

The following formality \namecref{cor:formality_CBV/D} uses also \Cref{thm:formality_BV/D}.

\begin{corollary}\label{cor:formality_CBV/D}
  The DG operad $\CBV/\Delta$ is formal and its cohomology is the hypercommutative operad $H^*(\CBV/\Delta)=\H$.
\end{corollary}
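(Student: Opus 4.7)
The plan is to derive this corollary directly from the two main inputs already available in this section: \Cref{prop:CBV/D_to_BV/D}, which provides a quasi-isomorphism $\CBV/\Delta \to \BV/\Delta$, and \Cref{thm:formality_BV/D}, which asserts that $\BV/\Delta$ is formal with cohomology $\H$ via a direct quasi-isomorphism $\H \to \BV/\Delta$ (here $\H$ is viewed as a DG operad with trivial differential).

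First, I would splice these two quasi-isomorphisms into a single zig-zag
\[
  \CBV/\Delta \xrightarrow{\sim} \BV/\Delta \xleftarrow{\sim} \H.
\]
By definition, the existence of such a zig-zag between $\CBV/\Delta$ and (what will turn out to be) its cohomology, viewed as a DG operad with zero differential, is precisely what it means for $\CBV/\Delta$ to be formal as a DG operad in $\Operads$.

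Next, I would apply the functor $H^*$ to the arrow $\CBV/\Delta \xrightarrow{\sim} \BV/\Delta$ from \Cref{prop:CBV/D_to_BV/D}, obtaining an isomorphism $H^*(\CBV/\Delta) \cong H^*(\BV/\Delta)$. Invoking \Cref{thm:formality_BV/D} once more to identify the right-hand side with the hypercommutative operad $\H$, I conclude the cohomology identification in the statement.

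I do not anticipate any real obstacle: the deep content is packaged in the cited results, namely the Khoroshkin--Markarian--Shadrin formality of $\BV/\Delta$ and \Cref{prop:CBV/D_to_BV/D} (which was itself obtained via a push-out and left-properness argument on top of \Cref{prop:CBV_to_BV}). The present corollary is essentially a bookkeeping step assembling these inputs.
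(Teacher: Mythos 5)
Your proposal is correct and matches the paper's own argument: the paper derives this corollary from the quasi-isomorphism $\CBV/\Delta\to\BV/\Delta$ of \Cref{prop:CBV/D_to_BV/D} together with \Cref{thm:formality_BV/D}, and explicitly records the same zig-zag $\CBV/\Delta\stackrel{\sim}{\longrightarrow}\BV/\Delta\stackrel{\sim}{\longleftarrow}\H$ immediately after the statement.
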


In this case, we do not know if we can define a direct quasi-isomorphism $\H\to\CBV/\Delta$. However, we have an explicit zig-zag
\[\CBV/\Delta\stackrel{\sim}{\longrightarrow}\BV/\Delta\stackrel{\sim}{\longleftarrow}\H\]
given by \Cref{prop:CBV/D_to_BV/D} and \Cref{thm:formality_BV/D}.

\begin{corollary}\label{cor:H_infty_to_CBV/D}
  Let $\H_\infty$ be the minimal model of the Koszul operad $\H$. There is a quasi-isomorphism of DG operads $\H_\infty\to\CBV/\Delta$.
\end{corollary}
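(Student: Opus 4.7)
The plan is to produce the desired map $\H_\infty\to\CBV/\Delta$ by a lifting argument in the Hinich model category $\Operads$, using the direct quasi-isomorphism $\H\to\BV/\Delta$ of \Cref{thm:formality_BV/D} and the quasi-isomorphism $\CBV/\Delta\to\BV/\Delta$ of \Cref{prop:CBV/D_to_BV/D}.

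First, I would check that the map $\CBV/\Delta\twoheadrightarrow\BV/\Delta$ in \Cref{prop:CBV/D_to_BV/D} is a \emph{surjective} quasi-isomorphism, hence an acyclic fibration in $\Operads$. Indeed, inspecting the presentations in \Cref{def:BV/D_operad} and \Cref{def:trivialized_CBV_operad}, the generators $\mu$, $\Delta$, and $\phi_n$ of $\BV/\Delta$ are precisely the images of the generators $\mu$, $\Delta_1$, and $\phi_n$ of $\CBV/\Delta$, so the map is surjective on generators and therefore an operad surjection.

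Next, since $\H$ is Koszul, its minimal model $\H_\infty$ is a cofibrant resolution, and the canonical projection $\H_\infty\xrightarrow{\sim}\H$ is a quasi-isomorphism. Composing with the quasi-isomorphism $\H\xrightarrow{\sim}\BV/\Delta$ of \Cref{thm:formality_BV/D} produces a quasi-isomorphism $f\colon\H_\infty\to\BV/\Delta$. Now I apply the lifting property of the cofibration $\varnothing\rightarrowtail\H_\infty$ against the acyclic fibration $\CBV/\Delta\twoheadrightarrow\BV/\Delta$ to obtain a DG operad morphism $\tilde{f}\colon\H_\infty\to\CBV/\Delta$ satisfying the commutativity of the triangle
\begin{center}
\begin{tikzcd}
& \CBV/\Delta \arrow[d, two heads, "\sim"] \\
\H_\infty \arrow[r, "f"'] \arrow[ru, dashed, "\tilde{f}"] & \BV/\Delta
\end{tikzcd}
\end{center}

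Finally, since both $f$ and the vertical map $\CBV/\Delta\to\BV/\Delta$ are quasi-isomorphisms, the two-out-of-three property for quasi-isomorphisms forces $\tilde{f}$ to be a quasi-isomorphism as well, which is the statement of the corollary. I do not anticipate a serious obstacle here: the only point demanding care is the verification of surjectivity of $\CBV/\Delta\to\BV/\Delta$, which is immediate from the explicit presentation, so the whole argument reduces to an application of the model structure of \cite{hinich_1997_homological_algebra_homotopy,hinich_2003_erratum_homological_algebra}.
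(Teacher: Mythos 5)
Your proof is correct and takes essentially the same route as the paper: the paper's one-line argument (cofibrancy of $\H_\infty$ plus the formality zig-zag $\CBV/\Delta\stackrel{\sim}{\to}\BV/\Delta\stackrel{\sim}{\leftarrow}\H$ established just before the corollary) unwinds to exactly the lifting you describe. Your observation that $\CBV/\Delta\to\BV/\Delta$ is surjective on generators, hence an acyclic fibration in the Hinich model structure, is the correct detail that makes the direct lift against the cofibration $\varnothing\rightarrowtail\H_\infty$ go through.
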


\begin{proof}
  Since $\H_\infty$ is a cofibrant replacement of $\H$ in $\Operads$, this \namecref{cor:H_infty_to_CBV/D} follows from \Cref{cor:formality_CBV/D}.
\end{proof}

We can use the previous results to enhance the de Rham complex and cohomology of a Jacobi manifold with the structure of a (homotopy) hypercommutative algebra, extending their well-known (homotopy) commutative algebra structure. However, we will soon see that all those structures are trivial, like for Poisson manifolds.

\begin{corollary}\label{cor:Jacobi_complex_hypercommutative}
  The de Rham complex $\CdR{M}$ of a Jacobi manifold $(M,\pi,\eta)$ has an $\H_\infty$-algebra structure obtained by restriction of scalars of \Cref{thm:Jacobi_CBV/D} along \Cref{cor:H_infty_to_CBV/D}, whose underlying $\C_\infty$-algebra is the usual plain commutative algebra structure.
\end{corollary}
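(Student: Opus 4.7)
The plan is to compare two \textit{a priori} different $\C_\infty$-algebra structures on $\CdR{M}$ and show they are $\infty$-isomorphic: the one obtained by restriction of scalars from $\H_\infty$ along $\C_\infty\hookrightarrow\H_\infty$, and the plain one coming from the exterior product. The argument closely parallels the Poisson case (\Cref{cor:trivial_hypercommutative_cohomology_strict}), with $\BV/\Delta$ replaced by $\CBV/\Delta$.

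By construction, the $\H_\infty$-algebra structure is defined by the composition $\H_\infty\to\CBV/\Delta\to\End{\CdR{M}}$ where the first map is the quasi-isomorphism of \Cref{cor:H_infty_to_CBV/D} and the second comes from \Cref{thm:Jacobi_CBV/D}. Restricting along $\C_\infty\hookrightarrow\H_\infty$ yields a $\C_\infty$-algebra structure encoded by the composite DG operad morphism $\C_\infty\hookrightarrow\H_\infty\to\CBV/\Delta$ followed by $\CBV/\Delta\to\End{\CdR{M}}$. On the other hand, the plain $\C_\infty$-structure on $\CdR{M}$ is given by the augmentation $\C_\infty\to\C$ composed with $\C\to\End{\CdR{M}}$ sending $\mu$ to the exterior product. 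Since the inclusion of the commutative suboperad $\C\subset\CBV/\Delta$ composed with the map $\CBV/\Delta\to\End{\CdR{M}}$ of \Cref{thm:Jacobi_CBV/D} sends $\mu$ to the exterior product, this plain $\C_\infty$-structure is equivalently described by the DG operad morphism $\C_\infty\to\C\hookrightarrow\CBV/\Delta\to\End{\CdR{M}}$.

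I would then compare the two resulting DG operad morphisms $\C_\infty\to\CBV/\Delta$. Both lift on cohomology the inclusion $\C\hookrightarrow\H = H^*(\CBV/\Delta)$ of the degree-zero suboperad, by \Cref{cor:formality_CBV/D}. Since $\C_\infty$ is cofibrant in $\Operads$, any two DG operad morphisms $\C_\infty\to\CBV/\Delta$ inducing the same map on cohomology are homotopic. Homotopic DG operad morphisms induce $\infty$-isomorphic algebra structures, so the $\C_\infty$-algebra structure obtained by restriction of the $\H_\infty$-structure on $\CdR{M}$ is $\infty$-isomorphic to the plain commutative one, which is the claim.

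The main subtlety is that the quasi-isomorphism $\H_\infty\to\CBV/\Delta$ of \Cref{cor:H_infty_to_CBV/D} is not canonical, so the $\H_\infty$-structure is only well-defined up to $\infty$-isomorphism; the comparison above must therefore be formulated at this level of precision. Everything else is essentially bookkeeping, using that $\C$ is the common degree-zero suboperad of $\H$ and $\CBV/\Delta$, that the map $\CBV/\Delta\to\End{\CdR{M}}$ realizes the commutative suboperad as the exterior product, and the standard model-theoretic fact that homotopic maps out of a cofibrant operad produce equivalent algebraic structures.
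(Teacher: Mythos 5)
The paper gives no proof of this corollary: the $\H_\infty$-structure is \emph{defined} as the restriction of scalars of \Cref{thm:Jacobi_CBV/D} along \Cref{cor:H_infty_to_CBV/D}, and the claim about the underlying $\C_\infty$-algebra is treated as immediate from the way the lift $\H_\infty\to\CBV/\Delta$ is constructed. You correctly isolate the one point that actually needs an argument, namely the comparison of the two DG operad morphisms $\C_\infty\to\CBV/\Delta$, and reducing the statement to showing these are homotopic is a sound strategy.

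However, your key step rests on a false general principle: it is \emph{not} true that two DG operad morphisms out of the cofibrant operad $\C_\infty$ which induce the same map on cohomology are homotopic. If that were true, every $\C_\infty$-algebra structure would be determined up to $\infty$-isomorphism by the induced product on cohomology, i.e.\ every DG commutative algebra would be formal. Cofibrancy gives bijections on homotopy classes along weak equivalences of \emph{targets}; it does not make the passage from homotopy classes to maps on cohomology injective. The gap is repairable here because of how the map in \Cref{cor:H_infty_to_CBV/D} is obtained: one lifts $\H_\infty\to\H\to\BV/\Delta$ against $\CBV/\Delta\to\BV/\Delta$, which is an acyclic fibration (it is surjective, hence a fibration, and a quasi-isomorphism by \Cref{prop:CBV/D_to_BV/D}). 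Consequently your two morphisms $\C_\infty\to\CBV/\Delta$ become \emph{equal} --- not merely cohomologous --- after composition with this acyclic fibration, both yielding $\C_\infty\to\C\subset\BV/\Delta$; since acyclic fibrations induce injections on homotopy classes of maps out of cofibrant objects, the two morphisms are homotopic, which gives your conclusion. Better still, choosing the lift in the category of operads under $\C_\infty$ (using that $\C_\infty\to\H_\infty$ is a cofibration) makes the underlying $\C_\infty$-algebra equal to the plain commutative one on the nose, which is the reading the paper intends.
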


\begin{corollary}\label{cor:Jacobi_cohomology_hypercommutative}
  The de Rham cohomology $\HdR{M}$ of a Jacobi manifold $(M,\pi,\eta)$ has a hypercommutative algebra structure, induced by \Cref{cor:Jacobi_complex_hypercommutative}, extending the usual graded commutative algebra structure.
\end{corollary}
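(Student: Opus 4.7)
The plan is to derive this corollary from \Cref{cor:Jacobi_complex_hypercommutative} by invoking the homotopy transfer theorem.

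I would first choose a contraction of $\CdR{M}$ onto $\HdR{M}$, for instance via a Hodge-theoretic splitting associated to an auxiliary Riemannian metric on $M$, or via any purely algebraic homotopy equivalence. Applying the homotopy transfer theorem for the Koszul operad $\H$ \cite[Theorem 10.3.1]{loday_vallette_2012_algebraic_operads} to the $\H_\infty$-algebra structure of \Cref{cor:Jacobi_complex_hypercommutative} yields a minimal $\H_\infty$-algebra structure on $\HdR{M}$, together with an $\infty$-quasi-isomorphism to the $\H_\infty$-algebra $\CdR{M}$.

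The key point is that a minimal $\H_\infty$-algebra (one with vanishing internal differential) canonically carries an underlying strict hypercommutative algebra structure: keep only the operations $m_n \colon \HdR{M}^{\otimes n} \to \HdR{M}$ of arity $n$ and degree $2(2-n)$ that lift the generators of $\H$. Indeed, the $\H_\infty$-relations satisfied by these operations amount to the defining quadratic relations of $\H$ holding modulo an explicit homotopy built from the internal differential and the auxiliary generators of $\H_\infty$; when the differential vanishes, these corrective terms disappear and the $\H$-relations hold on the nose. This is the same mechanism by which the binary operation of a minimal $\A_\infty$- or $\C_\infty$-algebra is strictly associative, and it is a general feature of quadratic Koszul operads.

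Applying the previous observation to the transferred minimal $\H_\infty$-structure on $\HdR{M}$ produces the required hypercommutative structure. Its binary operation $m_2$ coincides with the ordinary cohomological product: by \Cref{cor:Jacobi_complex_hypercommutative} the underlying $\C_\infty$-algebra structure on $\CdR{M}$ is the plain commutative one, so the transferred binary operation on $\HdR{M}$ is the graded commutative product induced by the exterior product, which is exactly the $m_2$ of the constructed hypercommutative structure. The step requiring the most care is the extraction of a strict $\H$-algebra from the minimal $\H_\infty$-structure; an alternative route that avoids this subtlety altogether is to mimic, in the $\CBV/\Delta$ setting, the Barannikov--Kontsevich construction, writing explicit formulas for the $m_n$ on $\HdR{M}$ directly in terms of the trivialization data $\phi_n$ from \Cref{thm:Jacobi_CBV/D} and verifying the hypercommutative relations by hand using \Cref{cor:formality_CBV/D}.
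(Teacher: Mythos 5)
Your proposal is correct and matches the paper's (implicit) reasoning: the paper treats this corollary as an immediate consequence of \Cref{cor:Jacobi_complex_hypercommutative}, precisely via the standard fact that the cohomology of an $\H_\infty$-algebra carries a strict $\H$-algebra structure (equivalently, via homotopy transfer to a minimal $\H_\infty$-structure whose generating operations satisfy the quadratic relations of the Koszul operad $\H$ on the nose once the internal differential vanishes), and the later \Cref{cor:Jacobi_cohomology_H_infty} confirms this is the intended mechanism. Your identification of $m_2$ with the cup product, using that the underlying $\C_\infty$-structure is the plain commutative one, is also exactly what the phrase ``extending the usual graded commutative algebra structure'' relies on.
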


Like in the Poisson case (\Cref{cor:Poisson_cohomology_H_infty}), we can deduce a slight improvement of \cite[Theorem 4.5]{dotsenko_shadrin_vallette_2015_rham_cohomology_homotopy}.

\begin{corollary}\label{cor:Jacobi_cohomology_H_infty}
  The de Rham cohomology $\HdR{M}$ of a Jacobi manifold $(M,\pi,\eta)$ has a minimal $\H_\infty$-algebra structure, obtained from \Cref{cor:Jacobi_complex_hypercommutative} by homotopy transfer, extending \Cref{cor:Jacobi_cohomology_hypercommutative}  whose rectification is strictly quasi-isomorphic to the $\H_\infty$-algebra $\CdR{M}$ in \Cref{cor:Jacobi_complex_hypercommutative}.
\end{corollary}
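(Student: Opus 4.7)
The plan is to mimic the Poisson case (\Cref{cor:Poisson_cohomology_H_infty}) by applying the homotopy transfer theorem (HTT) \cite[Theorem 10.3.1]{loday_vallette_2012_algebraic_operads} to the $\H_\infty$-algebra structure on $\CdR{M}$ provided by \Cref{cor:Jacobi_complex_hypercommutative}. Since we work over a field of characteristic zero, every cochain complex of vector spaces admits an SDR to its cohomology, so the first step is to fix an SDR
\[\SDR{\HdR{M}}{\CdR{M}}[\iota][p][h]\]
viewed as an $\sym$-module SDR concentrated in arity~$1$.

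Next, because $\H_\infty$ is the (cofibrant) minimal Koszul resolution of $\H$, HTT applies and yields simultaneously a minimal $\H_\infty$-algebra structure on $\HdR{M}$ and an $\infty$-quasi-isomorphism of $\H_\infty$-algebras extending $\iota$, both written down explicitly from the SDR data and the structure operations of \Cref{cor:Jacobi_complex_hypercommutative}. The ``extending'' assertion is automatic: by the formulae of HTT, the primary arity-$n$ operation of the transferred structure on $\HdR{M}$ is the operation induced on cohomology by the corresponding $\H_\infty$-operation on $\CdR{M}$, and these cohomology operations are precisely those assembling the hypercommutative algebra structure of \Cref{cor:Jacobi_cohomology_hypercommutative}.

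The remaining task is to upgrade the $\infty$-quasi-isomorphism produced by HTT to a strict quasi-isomorphism after rectification. I expect this to be the main technical point: one needs the general fact that rectification transforms $\infty$-quasi-isomorphisms into (zigzags of) strict ones, which rests on the Quillen equivalence between $\H_\infty$-algebras with $\infty$-morphisms and rectified $\H$-algebras with strict morphisms, in the spirit of \cite[Theorem 4.7.4]{hinich_1997_homological_algebra_homotopy}. Modulo this standard operadic machinery, the remaining verifications are routine inspections of the outputs of HTT.
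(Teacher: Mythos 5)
Your proposal is correct and follows exactly the route the paper intends: the paper states this corollary without a written proof, presenting it as an immediate consequence of \Cref{cor:Jacobi_complex_hypercommutative} via the homotopy transfer theorem (as in the Poisson analogue \Cref{cor:Poisson_cohomology_H_infty}), with the strict quasi-isomorphism after rectification coming from the standard rectification theory for algebras over the Koszul operad $\H$. Your three steps (choose an SDR onto cohomology, apply HTT to get the minimal structure plus an $\infty$-quasi-isomorphism whose linear part recovers \Cref{cor:Jacobi_cohomology_hypercommutative}, then pass through rectification) are precisely the intended argument.
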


The counterpart of \Cref{prop:Jacobi_CBV/D} for operads is the following \namecref{lem:CBV/D_to_J}.

\begin{proposition}\label{lem:CBV/D_to_J}
  There is a morphism of DG operads $\CBV/\Delta\to\J$ extending \Cref{lem:CBV_to_J} and satisfying $\phi_1\mapsto i$ and $\phi_n\mapsto 0$ for $n\geq 2$.
\end{proposition}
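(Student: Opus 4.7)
The plan is to verify that the proposed assignment on generators extends to a well-defined DG operad morphism by checking all defining relations of $\CBV/\Delta$. Since the morphism is required to extend \Cref{lem:CBV_to_J} on the suboperad $\CBV \subset \CBV/\Delta$, the relations of $\CBV$ listed in \Cref{def:CBV_operad} are already accounted for. What remains is to check the additional defining relations of \Cref{def:trivialized_CBV_operad}, namely, for each $n\geq 1$,
\[
d(\phi_n) + \sum_{p=2}^n \frac{1}{p!} \sum_{\substack{q_1+\cdots+q_p=n\\ q_1,\dots,q_p\geq 1}} [\phi_{q_1},\dots,[\phi_{q_{p-1}},d(\phi_{q_p})]\dots] = -\Delta_n.
\]

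The next step is to exploit that $\phi_n \mapsto 0$ for $n\geq 2$: after applying the proposed map, only the summand with $q_1=\cdots=q_p=1$ survives in the nested bracket, which forces $p=n$. The image of the relation therefore collapses to the single identity $\frac{1}{n!}[i,[i,\dots,[i,d(i)]\dots]] = -\Delta_n$ (image), with $n-1$ outer copies of $i$. For $n=1$ this reads $d(i)=d(i)$, which is trivial. For $n=2$ it reads $\frac{1}{2}[i,d(i)] = ji$, which is exactly relation \eqref{it:Jacobi_commutator} in $\J$, i.e.~$id(i)=d(i)i+2ji$, once one notes that $|i|=-2$ and $|d(i)|=-1$ so the graded commutator $[i,d(i)]$ reduces to the ungraded difference $id(i)-d(i)i$.

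For $n\geq 3$ the main task is to show that the iterated commutator $[i,[i,\dots,[i,d(i)]\dots]]$ vanishes in $\J$; since the $n=3$ bracket is the innermost new case, it suffices to prove $[i,[i,d(i)]]=0$ and the higher ones follow automatically. Using the $n=2$ identity $[i,d(i)]=2ji$ and the graded derivation property of $[i,-]$, one computes
\[
[i,[i,d(i)]] = 2[i,ji] = 2\bigl([i,j]\,i + j\,[i,i]\bigr),
\]
which is zero by the relations $[i,j]=0$ (i.e.~$ij=ji$) and $[i,i]=0$. This is precisely the operadic transcription of the calculation carried out in the proof of \Cref{prop:Jacobi_CBV/D}; in fact, the present \namecref{lem:CBV/D_to_J} is the operad-level reformulation of that algebraic statement, so no genuine obstacle arises beyond bookkeeping signs and coefficients. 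The only point requiring mild care is checking that the reduction to a single iterated commutator correctly collapses the multinomial factor $\frac{1}{p!}$ against the number of partitions, which is immediate since only one partition survives.
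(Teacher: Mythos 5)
Your verification is correct and is essentially the argument the paper intends: the paper gives no explicit proof of this proposition, presenting it as the operadic counterpart of \Cref{prop:Jacobi_CBV/D}, and your computation (reduction of the trivialization relations to $[i,d(i)]=2ji$ for $n=2$ and $[i,[i,d(i)]]=2[i,j]i+2j[i,i]=0$ for $n\geq 3$) is exactly the operad-level transcription of the proof given there.
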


We now present the operadic version of \Cref{cor:Jacobi_factors}.

\begin{proposition}\label{prop:Jacobi_factorization}
  Given a Jacobi manifold $(M,\pi,\eta)$, the morphism $\CBV/\Delta\to\E{\CdR{M}}$ determined by \Cref{thm:Jacobi_CBV/D} factors as \[\CBV/\Delta\longrightarrow\J\longrightarrow\E{\CdR{M}},\]
  where the first arrow is the one in \Cref{lem:CBV/D_to_J} and the second arrow is given by \Cref{prop:Jacobi_J}.
\end{proposition}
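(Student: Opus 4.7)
The plan is to show that this is simply the operadic reformulation of \Cref{cor:Jacobi_factors}, so the proof amounts to tracking which $\CBV/\Delta$-algebra structure on $\CdR{M}$ corresponds to each of the two candidate morphisms, and observing that they agree.

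First, I would recall that a DG operad morphism $\CBV/\Delta\to\E{\CdR{M}}$ is exactly the data of a trivialized commutative $BV_\infty$-algebra structure on $\CdR{M}$ (\Cref{def:CBV/D_algebra}), and the morphism under consideration is by definition the one determined by \Cref{thm:Jacobi_CBV/D}. Its defining operations are the exterior product for $\mu$, $\Delta_1=L_\pi$, $\Delta_2=-i_\eta i_\pi$, $\phi_1=i_\pi$, and all other $\Delta_n$ and $\phi_n$ equal to zero.

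Next, I would describe the composite $\CBV/\Delta\to\J\to\E{\CdR{M}}$. The second map is provided by \Cref{prop:Jacobi_J} and sends $\mu$ to the exterior product, $i$ to $i_\pi$, and $j$ to $i_\eta$. The first map is provided by \Cref{lem:CBV/D_to_J} (which extends \Cref{lem:CBV_to_J}) and sends $\mu\mapsto\mu$, $\Delta_1\mapsto -d(i)$, $\Delta_2\mapsto -ji$, $\Delta_n\mapsto 0$ for $n\geq 3$, $\phi_1\mapsto i$, and $\phi_n\mapsto 0$ for $n\geq 2$. Composing, the generators of $\CBV/\Delta$ land in $\E{\CdR{M}}$ as follows: $\mu$ becomes the exterior product, $\Delta_1$ becomes $-d(i_\pi)=[i_\pi,d]=L_\pi$ (by Cartan's formula, noting that our sign convention gives $d(i)=di-id$ so $-d(i)=id-di=[i,d]$), $\Delta_2$ becomes $-i_\eta i_\pi$, $\phi_1$ becomes $i_\pi$, and the remaining $\Delta_n$, $\phi_n$ become zero.

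Comparing the two lists of images, they coincide generator by generator. Since $\CBV/\Delta$ is generated by these elements (subject to the relations of \Cref{def:trivialized_CBV_operad}), the two DG operad morphisms $\CBV/\Delta\to\E{\CdR{M}}$ agree. The relations among these images are already guaranteed to hold: the exact $BV_\infty$ relations and the trivialization relations follow from \Cref{prop:Jacobi_CBV,prop:Jacobi_CBV/D} applied to the Jacobi algebra structure of \Cref{prop:Jacobi_J}. No step presents any real obstacle — the only thing worth double-checking is that the sign conventions for $d(i)$ in \Cref{lem:CBV_to_J} and \Cref{lem:BV_to_EBV} match those used in \Cref{thm:Jacobi_CBV/D} for $\Delta_1=L_\pi$ and $\Delta_2=-i_\eta i_\pi$, as warned in \Cref{rem:fix_sign}.
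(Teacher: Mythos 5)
Your proposal is correct and matches the paper's (essentially unwritten) argument: the paper presents this proposition as the operadic restatement of \Cref{cor:Jacobi_factors}, exactly as you do, and the generator-by-generator comparison you carry out (including the sign check $-d(i)\mapsto[i_\pi,d]=L_\pi$) is the whole content. Nothing further is needed.
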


The following \namecref{thm:Jacobi} is the main result of this section.

\begin{theorem}\label{thm:Jacobi}
  The DG operad inclusion $\C\subset\J$ is a quasi-isomorphism.
\end{theorem}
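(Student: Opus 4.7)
The plan is to mirror the proof of Theorem~\ref{thm:Poisson}, applying the detection principle of Theorem~\ref{thm:ideal}. First I would construct an $\sym$-module $V$ concentrated in arity $1$ that carries the non-commutative generators of $\J$ together with their differentials and a contracting homotopy. Since $\J$ has two such generators, $i$ in degree $-2$ and $j$ in degree $-1$, the natural choice is $V = V_i \oplus V_j$, where $V_i$ has basis $\{i, d(i)\}$ concentrated in degrees $-2, -1$ (exactly as in the Poisson case) and $V_j$ has basis $\{j, d(j)\}$ concentrated in degrees $-1, 0$. Both summands are contractible via the explicit homotopies $h(d(i)) = i$ and $h(d(j)) = j$, yielding an $\sym$-module SDR from $V$ to $0$.

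Next I would present $\J$ as $\P/I$ where $\P = \C \amalg \F(V)$ and $I$ is the DG operadic ideal generated by the six relations (iii)--(viii) of Definition~\ref{def:Jacobi}. These generators are weight-homogeneous of positive weight ($1$ for the differential operator relations (vii)--(viii), $2$ for the rest), which verifies the first hypothesis of Theorem~\ref{thm:ideal}. The remaining and central task is to check $h_C(S) \subset I$ using the formula of Theorem~\ref{thm:operad_SDR}~\eqref{it:h_composition}. The weight-$1$ relations are immediately annihilated because $h(i) = 0 = h(j)$ and the $\mu$-factors sit in weight $0$. For the weight-$2$ relations, a direct expansion should yield $h_C(id(i) - d(i)i - 2ji) = 0$ (the two $i^2/2$ contributions cancel), $h_C(ij - ji) = 0$, $h_C(j^2) = 0$, and crucially $h_C(d(i)j + jd(i)) = \frac{1}{2}(ij - ji)$. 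This last expression lies in $I$ by relation (iv), so all hypotheses of Theorem~\ref{thm:ideal} are met and the composite $\C \hookrightarrow \P \twoheadrightarrow \J$ is a quasi-isomorphism.

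The main obstacle I anticipate is careful Koszul sign bookkeeping in Theorem~\ref{thm:operad_SDR}~\eqref{it:h_composition}, specifically ensuring that the $(-1)^{|x|}$ factor produces the cancellation required in the symmetric relation (iii) and the anti-symmetric relation (v). The appearance of $ij - ji$ as the nonzero value of $h_C$ on relation (v) is the subtle point that makes the argument succeed: it works precisely because $ij - ji$ already occurs as relation (iv), so $I$ remains closed under $h_C$. If the generators $V$ were chosen differently, for instance by omitting $d(j)$ or by mixing $V_i$ and $V_j$ via a different homotopy, the stray term produced by $h_C$ on relation (v) could well fall outside $I$, and the detection principle would fail. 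Aside from this coordination, the proof is a direct Jacobi analogue of the Poisson argument.
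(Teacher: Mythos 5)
Your proposal is correct and follows exactly the paper's own argument: the same splitting $C=C_1\oplus C_2$ with the same degrees and homotopies, the same presentation of $\J$ as a quotient of $\C\amalg\F(C)$ by the six weight-homogeneous relations, and the same key computation that $h_C$ sends $d(i)j+jd(i)$ to $\tfrac{1}{2}(ij-ji)$, which lies in $I$ because it is the commutator relation. Nothing essential differs from the paper's proof.
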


\begin{proof}
  Consider the cochain complex $C=C_1\oplus C_2$, where 
  \begin{center}
    \begin{tikzpicture}
      \node at (-1,0) {$C_1\colon$};
      \node (C0) at (0,0) {$\cdots$};
      \node (C1) at (1,0) {$0$};
      \node (C2) at (2,0) {$k$};
      \node (C3) at (3,0) {$k$};
      \node (C4) at (4,0) {$0$};
      \node (C5) at (5,0) {$\cdots$};
      \draw[->] (C0) -- (C1);
      \draw[->] (C1) -- (C2);
      \draw[->] (C2) -- (C3);
      \draw[->] (C3) -- (C4);
      \draw[->] (C4) -- (C5);
      \node at (-.2,.5) {\scriptsize degree};
      \node at (.9,.5) {$\scriptstyle -3$};
      \node at (1.9,.5) {$\scriptstyle -2$};
      \node at (2.9,.5) {$\scriptstyle -1$};
      \node at (4,.5) {$\scriptstyle 0$};
      \node at (5,.5) {$\cdots$};
      \node at (-.5,-.5) {\scriptsize generators};
      \node (i) at (2,-.5) {$i$};
      \node (di) at (3,-.505) {$d(i)$};
      \draw[|->] (i) -- (di);
    \end{tikzpicture}
  \end{center}
  \begin{center}
    \begin{tikzpicture}
      \node at (-1,0) {$C_2\colon$};
      \node (C0) at (0,0) {$\cdots$};
      \node (C1) at (1,0) {$0$};
      \node (C2) at (2,0) {$k$};
      \node (C3) at (3,0) {$k$};
      \node (C4) at (4,0) {$0$};
      \node (C5) at (5,0) {$\cdots$};
      \draw[->] (C0) -- (C1);
      \draw[->] (C1) -- (C2);
      \draw[->] (C2) -- (C3);
      \draw[->] (C3) -- (C4);
      \draw[->] (C4) -- (C5);
      \node at (-.2,.5) {\scriptsize degree};
      \node at (.9,.5) {$\scriptstyle -2$};
      \node at (1.9,.5) {$\scriptstyle -1$};
      \node at (3,.5) {$\scriptstyle 0$};
      \node at (4,.5) {$\scriptstyle 1$};
      \node at (5,.5) {$\cdots$};
      \node at (-.5,-.5) {\scriptsize generators};
      \node (i) at (2,-.5) {$j$};
      \node (di) at (3,-.5) {$d(j)$};
      \draw[|->] (i) -- (di);
    \end{tikzpicture}
  \end{center}
  In this case we have an SDR
  \begin{center}
    \SDR{0}{C}[][][h]
  \end{center}
  determined by the formulas
  \[hd(i)=i,\qquad hd(j)=j.\]
  Necessarily $h(i)=0$ and $h(j)=0$.
  We regard $C$ as an $\sym$-module concentrated in arity $1$. Hence the previous SDR is an $\sym$-module SDR. The operad $\J$ is the quotient of $\P=\C\amalg\F(C)$ by the DG ideal $I$ generated by 
  \begin{enumerate}
    \item $id(i)-d(i)i-2ji$.
          \item\label{it:j_i_commutator} $ij-ji$.
          \item\label{it:j_d(i)_commutator} $d(i)j+jd(i)$.
    \item $j^2$.
    \item $\mu\circ_1j\cdot[()+(1\ 2)]-j\mu$.
    \item $(\mu^2\circ_1i)\cdot[()+(1\ 2)+(1\ 2\ 3)]+i\mu^2-\mu\circ_1(i\mu)\cdot[()+(2\ 3)+(1\ 3\ 2)]$.
  \end{enumerate}
  All these generators are weight-homogeneous. The first four generators have weight $2$, and the two remaining ones have weight $1$. The homotopy $h_C$ of \Cref{thm:operad_SDR} maps all generators to $0\in I$, except for $h_C\eqref{it:j_d(i)_commutator}=\frac{1}{2}\eqref{it:j_i_commutator}\in I$. Indeed, by \Cref{thm:operad_SDR} \eqref{it:h_composition},
  \begin{align*}
    h(id(i)-d(i)i-2ji) & =\frac{1}{2}h(i)d(i)+\frac{1}{2}ihd(i)-\frac{1}{2}hd(i)i+\frac{1}{2}d(i)h(i)-h(j)i+jh(i) \\
                       & =0+\frac{1}{2}i^2-\frac{1}{2}i^2+0+0+0                                                   \\
                       & =0,                                                                                      \\
    h(ij-ji)           & =\frac{1}{2}h(i)j+\frac{1}{2}ih(j)-\frac{1}{2}h(j)i+\frac{1}{2}jh(i)                     \\
                       & =0+0-0+0                                                                                 \\
                       & =0,                                                                                      \\
    h(d(i)j+jd(i))     & =\frac{1}{2}hd(i)j-\frac{1}{2}d(i)h(j)+\frac{1}{2}h(j)d(i)-\frac{1}{2}jhd(i)             \\
                       & =\frac{1}{2}ij-0+0-\frac{1}{2}ji                                                         \\
                       & =\frac{1}{2}(ij-ji),                                                                     \\
    h(j^2)             & =\frac{1}{2}h(j)j-\frac{1}{2}jh(j)                                                       \\
                       & =0-0                                                                                     \\
                       & =0.
  \end{align*}
  We omit the equations for the weight $1$ relations. They are very easy because $h(i)=h(j)=0$. 
  Therefore, by \Cref{thm:ideal}, the composite
  \[\C\hookrightarrow\P\twoheadrightarrow\J\]
  is a quasi-isomorphism, and this composite is the inclusion in the statement.
\end{proof}

As corollaries, we obtain the expected triviality results for the (homotopy) hypercommutative algebra structures on the de Rham complex and cohomology of Jacobi manifolds.

\begin{corollary}
  For any Jacobi manifold $(M,\pi,\eta)$, the hypercommutative algebra structure on $\HdR{M}$ defined in \Cref{cor:Jacobi_cohomology_hypercommutative} is trivial.
\end{corollary}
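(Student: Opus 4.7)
The plan is to mirror the proof of \Cref{cor:trivial_hypercommutative_cohomology} from the Poisson setting, substituting \Cref{thm:Jacobi} for \Cref{thm:Poisson}. First I would identify the hypercommutative structure on $\HdR{M}$ concretely: by \Cref{cor:Jacobi_cohomology_hypercommutative} it is induced by the $\H_\infty$-algebra structure on $\CdR{M}$ from \Cref{cor:Jacobi_complex_hypercommutative}, which is itself the restriction of scalars of \Cref{thm:Jacobi_CBV/D} along $\H_\infty \to \CBV/\Delta$ (\Cref{cor:H_infty_to_CBV/D}). By \Cref{prop:Jacobi_factorization} (equivalently \Cref{cor:Jacobi_factors}), the structure map $\CBV/\Delta \to \E{\CdR{M}}$ factors through $\J$ via \Cref{lem:CBV/D_to_J}, so the $\H_\infty$-structure on $\CdR{M}$ is realized as the composite
\[
\H_\infty \longrightarrow \CBV/\Delta \longrightarrow \J \longrightarrow \E{\CdR{M}}.
\]

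Next I would exploit that the pre-composition $\C \hookrightarrow \H \hookrightarrow \H_\infty \to \CBV/\Delta \to \J$ coincides with the canonical inclusion $\C \subset \J$ of \Cref{thm:Jacobi}, since any DG operad map out of $\C$ is determined by the image of $\mu$, and each arrow in the chain sends $\mu$ to $\mu$. By \Cref{thm:Jacobi} this composite is a quasi-isomorphism, hence $H^*(\J) = \C$, and the induced map $\H = H^*(\H_\infty) \to H^*(\J) = \C$ is forced to be the unique retraction $\H \twoheadrightarrow \C$: its restriction to $\C \subset \H$ must be the identity, and $\C \subset \H$ is an isomorphism in degree $0$.

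Finally I would conclude that the hypercommutative structure on $\HdR{M}$, read off from the map $\H \to H^*(\J) \to \E{\HdR{M}}$, factors through $\H \twoheadrightarrow \C$, hence is the restriction of scalars of the exterior-product commutative structure along the retraction, i.e.~the trivial one. No serious obstacle is expected: the proof is essentially an assembly of the preceding statements together with \Cref{thm:Jacobi}. The only verifications required are that the relevant composites agree on the generator $\mu$ and that the map $\C \to \E{\HdR{M}}$ obtained after passing through the retraction really recovers the usual graded commutative algebra structure on de Rham cohomology, both of which are transparent.
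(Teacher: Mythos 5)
Your proposal is correct and follows essentially the same route as the paper: the paper's proof likewise invokes \Cref{prop:Jacobi_factorization} together with the observation that the composite $\C\to\H_\infty\to\CBV/\Delta\to\J$ (each arrow sending $\mu$ to $\mu$) is the quasi-isomorphism of \Cref{thm:Jacobi}, forcing the induced structure on cohomology to factor through the retraction $\H\twoheadrightarrow\C$. Your write-up just makes explicit the identification on cohomology that the paper leaves implicit.
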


\begin{proof}
  This follows from \Cref{prop:Jacobi_factorization} and from the fact that the composite $\C\to\H_\infty\to\CBV/\Delta\to\J$ of the inclusion of the degree $0$ part $\C\subset\H$ with the maps in \Cref{cor:H_infty_to_CBV/D} and \Cref{lem:CBV/D_to_J} is the quasi-isomorphism in \Cref{thm:Jacobi}.
\end{proof}

\begin{corollary}
  For any Jacobi manifold $(M,\pi,\eta)$, the $\H_\infty$-algebra structure on $\CdR{M}$ defined in \Cref{cor:Jacobi_complex_hypercommutative} is quasi-isomorphic to the trivial one.
\end{corollary}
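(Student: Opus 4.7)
The plan is to follow the strategy used for \Cref{cor:trivial_hypercommutative_cohomology_strict} in the Poisson case, replacing the exact Batalin--Vilkovisky operad $\EBV$ by the Jacobi operad $\J$.

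First, I would exhibit both $\H_\infty$-algebra structures on $\CdR{M}$ as restrictions of scalars of Jacobi algebra structures along the composite DG operad map $\H_\infty\to\CBV/\Delta\to\J$ assembled from \Cref{cor:H_infty_to_CBV/D} and \Cref{lem:CBV/D_to_J}. The given structure of \Cref{cor:Jacobi_complex_hypercommutative} is restricted from the Jacobi algebra of \Cref{prop:Jacobi_J}, where $i=i_\pi$ and $j=i_\eta$; this is precisely the content of \Cref{prop:Jacobi_factorization}. In parallel, I would argue that the trivial $\H_\infty$-algebra structure is restricted along the same composite from the \emph{trivial} Jacobi algebra on $\CdR{M}$, i.e.~the one with $i=0$ and $j=0$ obtained from the commutative algebra $\CdR{M}$ by restriction of scalars along the unique retraction $\J\twoheadrightarrow\C$.

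To verify this last claim, it is enough to compare the two DG operad morphisms from $\H_\infty$ to $\C$ given respectively by $\H_\infty\to\CBV/\Delta\to\J\twoheadrightarrow\C$ and by $\H_\infty\twoheadrightarrow\C_\infty\to\C$. Since $\C$ is concentrated in degree $0$ with trivial differential, and $\H_\infty$ has a unique generator in degree $0$ (the arity $2$ commutative product, with all remaining generators in strictly negative degree), both maps are forced to send the degree $0$ generator to the commutative product of $\C$ and every other generator to zero, so they coincide.

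With this factorization in hand, the two Jacobi algebra structures on $\CdR{M}$ share the same underlying DG commutative algebra structure (the exterior product). By \Cref{thm:Jacobi} together with \cite[Theorem 4.7.4]{hinich_1997_homological_algebra_homotopy}, restriction of scalars along the inclusion $\C\hookrightarrow\J$ is an equivalence between the homotopy categories of Jacobi algebras and DG commutative algebras, hence the two Jacobi algebra structures are quasi-isomorphic. Further restriction of scalars along $\H_\infty\to\J$ preserves quasi-isomorphisms, so the two $\H_\infty$-algebra structures on $\CdR{M}$ are quasi-isomorphic, as required. The main technical point is the identification performed in the second paragraph; the rest is a faithful transcription of the Poisson argument.
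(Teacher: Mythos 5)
Your proposal is correct and follows essentially the same route as the paper: both $\H_\infty$-structures are exhibited as restrictions of Jacobi algebra structures (with $i=i_\pi$, $j=i_\eta$ versus $i=j=0$) along $\H_\infty\to\CBV/\Delta\to\J$, and the two Jacobi structures are identified up to quasi-isomorphism via \Cref{thm:Jacobi} and \cite[Theorem 4.7.4]{hinich_1997_homological_algebra_homotopy}, exactly as in the proof of \Cref{cor:trivial_hypercommutative_cohomology_strict} with $\EBV$ replaced by $\J$. Your extra verification that the trivial structure factors through the unique retraction $\J\twoheadrightarrow\C$ is a detail the paper leaves implicit, but it is consistent with its argument.
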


\begin{proof}
  We proceed like in the proof of \Cref{cor:trivial_hypercommutative_cohomology_strict}, carried out in \Cref{sec:EBV}. Both $\H_\infty$-algebra structures can be obtained from two Jacobi algebra structures on $\CdR{M}$ with the same commutative product (the exterior product) by restriction of scalars along the map $\H_\infty\to\J$ which is the composition of the quasi-isomorphism $\H_\infty\to\CBV/\Delta$ in \Cref{cor:H_infty_to_CBV/D} and the map $\CBV/\Delta\to\J$ in \Cref{lem:CBV/D_to_J}. In the first Jacobi algebra structure $i=i_\pi$ and $j=i_\eta$ (\Cref{prop:Jacobi_J}), and in the second one $i=0$ and $j=0$. It therefore suffices to prove that these Jacobi algebra structures on $\CdR{M}$ are quasi-isomorphic. The argument for this is identical to the last paragraph of the proof of \Cref{cor:trivial_hypercommutative_cohomology_strict}, carried out in \Cref{sec:EBV}, replacing $\EBV$ with $\J$.
\end{proof}

\begin{corollary}
  Given a Jacobi manifold $(M,\pi,\eta)$, the minimal $\H_\infty$-algebra structure on $\HdR{M}$ defined in \Cref{cor:Jacobi_cohomology_H_infty} is $\infty$-isomorphic to the trivial one.
\end{corollary}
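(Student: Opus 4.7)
The plan is to deduce this from the previous corollary via homotopy transfer, paralleling the passage from \Cref{cor:trivial_hypercommutative_cohomology_strict} to \Cref{cor:trivial_hypercommutative_cohomology_infty} in the Poisson case.

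First, I would recall that the previous corollary provides an $\H_\infty$-quasi-isomorphism (possibly via a zig-zag) between the $\H_\infty$-algebra $\CdR{M}$ of \Cref{cor:Jacobi_complex_hypercommutative} and $\CdR{M}$ equipped with the trivial $\H_\infty$-algebra structure, both sharing the same underlying commutative algebra. I would then apply the homotopy transfer theorem along the same chosen contraction from $\CdR{M}$ onto $\HdR{M}$ used to define the minimal structure of \Cref{cor:Jacobi_cohomology_H_infty}. This produces two minimal $\H_\infty$-algebra structures on $\HdR{M}$: the transfer of the genuine structure, which by definition is the structure of \Cref{cor:Jacobi_cohomology_H_infty}, and the transfer of the trivial $\H_\infty$-structure, which I claim is the trivial minimal $\H_\infty$-algebra structure on $\HdR{M}$.

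By the invariance part of the homotopy transfer theorem \cite[\S10.3]{loday_vallette_2012_algebraic_operads}, an $\infty$-quasi-isomorphism of source $\H_\infty$-algebras induces an $\infty$-quasi-isomorphism between the corresponding transferred minimal structures; since any $\infty$-quasi-isomorphism between minimal $\H_\infty$-algebras is automatically an $\infty$-isomorphism, the two steps combined yield the desired $\infty$-isomorphism.

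The only genuinely substantive point, and what I expect to be the main (though minor) obstacle, is the identification of the transferred trivial structure with the trivial minimal structure. This reduces to the observation that homotopy transfer commutes, up to $\infty$-isomorphism, with restriction of scalars along the operadic retraction $\H_\infty\twoheadrightarrow\C_\infty$: the trivial $\H_\infty$-structure on $\CdR{M}$ is just the plain commutative algebra structure viewed as an $\H_\infty$-algebra through this retraction, so its transfer is the transferred minimal $\C_\infty$-structure on $\HdR{M}$ viewed again through the same retraction, i.e.~the trivial minimal $\H_\infty$-structure on $\HdR{M}$. This is transparent from the tree-summation formulas for transferred operations, since input operations concentrated in the $\C_\infty$-subpart force the same for the transferred ones.
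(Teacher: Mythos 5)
Your argument is correct and is exactly the (implicit) argument the paper relies on: the paper states this corollary without proof as an immediate consequence of the preceding one, via homotopy transfer, the compatibility of transfer with restriction of scalars along $\H_\infty\twoheadrightarrow\C_\infty$, and the fact that $\infty$-quasi-isomorphic algebras have $\infty$-isomorphic minimal models. Your write-up simply makes these standard steps explicit.
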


The following two corollaries can be applied to $\P=\CBV$ the commutative $BV_\infty$-operad, see \Cref{lem:CBV/D_to_J}. In this case $\P^0=\C$ is the commutative operad, hence $\P^0$-formal means just formal. \Cref{def:trivial_algebras} applied to $\CBV$ says that a commutative $BV_\infty$-algebra is trivial if $\Delta_n=0$ for $n\geq 1$.

\begin{corollary}
  Given a Jacobi manifold $(M,\pi,\eta)$, a graded operad $\P$ concentrated in degrees $\leq 0$, and an operad morphism $\P\to\J$, the $\P$-algebra structure on $\CdR{M}$ obtained from \Cref{prop:Jacobi_J} by restriction of scalars is quasi-isomorphic to the trivial $\P$-algebra structure.
\end{corollary}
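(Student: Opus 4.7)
The plan is to mirror the proof of \Cref{cor:P-trivial} (which is carried out immediately after \Cref{thm:Poisson}) step by step, replacing $\EBV$ by $\J$ throughout, with \Cref{thm:Jacobi} playing the role of \Cref{thm:Poisson}. The structural inputs already in place are: (i) the inclusion $\C \hookrightarrow \J$ of \Cref{thm:Jacobi} is a quasi-isomorphism; (ii) this inclusion is an isomorphism in degree $0$, so there is a unique retraction $\J \twoheadrightarrow \C$; (iii) since $\P$ is concentrated in degrees $\leq 0$, there is likewise a unique retraction $\P \twoheadrightarrow \P^0$.

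First, I would assemble these into the commutative diagram of DG operads
\[
\begin{tikzcd}
  \P^0 \ar[r] \ar[d, hook] & \C \ar[d, hook] \\
  \P \ar[r] \ar[d, two heads] & \J \ar[d, two heads] \ar[rdd, bend left, "\text{\Cref{prop:Jacobi_J}}"] \\
  \P^0 \ar[r] & \C \ar[rd, "\text{exterior product}"'] \\
  & & \E{\CdR{M}}
\end{tikzcd}
\]
The upper triangle (involving the endomorphism operad) does not commute on the nose, but the two composites $\J \to \E{\CdR{M}}$ that can be formed from it ---namely the map of \Cref{prop:Jacobi_J}, with $i=i_\pi$ and $j=i_\eta$, and the composite $\J \twoheadrightarrow \C \to \E{\CdR{M}}$, corresponding to $i=0$ and $j=0$--- are two Jacobi algebra structures on $\CdR{M}$ sharing the same underlying DG commutative algebra (the usual exterior-product structure on differential forms).

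The key step is then to invoke \Cref{thm:Jacobi} together with \cite[Theorem 4.7.4]{hinich_1997_homological_algebra_homotopy}: restriction of scalars along the quasi-isomorphism $\C \subset \J$ induces an equivalence between the homotopy categories of Jacobi algebras and DG commutative algebras. Since both Jacobi algebra structures above restrict to the same object in the target category, they must be quasi-isomorphic as Jacobi algebras. This is verbatim the reasoning used in the last paragraph of the proof of \Cref{cor:trivial_hypercommutative_cohomology_strict}, which the authors themselves flag as the template for this kind of argument.

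Finally, I would apply restriction of scalars along the given morphism $\P \to \J$ to both Jacobi structures. This operation clearly preserves quasi-isomorphisms since it does not alter the underlying complex, so the two resulting $\P$-algebra structures on $\CdR{M}$ are quasi-isomorphic. By construction, restricting the $i=j=0$ Jacobi structure along $\P \to \J$ factors through the retraction $\P \to \P^0$ followed by the inclusion $\P^0 \to \C \to \J$, hence it is precisely the trivial $\P$-algebra structure in the sense of \Cref{def:trivial_algebras}. I do not expect any serious obstacle: the entire argument is a formal consequence of \Cref{thm:Jacobi}, Hinich's Quillen equivalence, and the commutativity of the two squares above; the only bookkeeping is to verify that the retraction $\J \twoheadrightarrow \C$ is compatible with the one $\P \to \P^0$, which is automatic by uniqueness of retractions in the relevant degrees.
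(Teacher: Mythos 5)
Your proposal is correct and follows essentially the same route as the paper: the authors' own proof simply says to repeat the argument of \Cref{cor:P-trivial} with $\EBV$ replaced by $\J$, which is exactly the diagram, the appeal to \Cref{thm:Jacobi} together with Hinich's Theorem 4.7.4, and the final restriction of scalars that you spell out. No gaps.
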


The proof is like that of \Cref{cor:P-trivial}, replacing $\EBV$ with $\J$ and invoking some of the previous results of this section.

\begin{corollary}
  Let $(M,\pi,\eta)$ be a Jacobi manifold, $\P$ a graded operad concentrated in degrees $\leq 0$, $\P^0\subset\P$ its degree $0$ suboperad, and $\P\to\J$ an operad morphism. We use it together with \Cref{prop:Jacobi_J} to equip $\CdR{M}$ with a $\P$-algebra structure. It has an underlying $\P^0$-algebra structure obtained by restriction of scalars along the inclusion of the degree $0$ part $\P^0\subset\P$. The following statements hold:
  \begin{enumerate}
    \item If $M$ is formal then it is also $\P^0$-formal.
    \item $M$ is $\P$-formal if and only if it is $\P^0$-formal.
  \end{enumerate}
\end{corollary}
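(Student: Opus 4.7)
My plan is to follow the strategy of the analogous Poisson result \Cref{cor:general_formality}, replacing $\EBV$ with $\J$ and invoking \Cref{thm:Jacobi} in place of \Cref{thm:Poisson}. The two ingredients are the quasi-isomorphism $\C\subset\J$ from \Cref{thm:Jacobi}, which by \cite[Theorem 4.7.4]{hinich_1997_homological_algebra_homotopy} induces an equivalence between the homotopy categories of DG commutative algebras and DG $\J$-algebras, together with the preceding corollary, according to which the $\P$-algebra structure on $\CdR{M}$ is quasi-isomorphic to its trivial one.

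For part (2), the direction $\P$-formal $\Rightarrow \P^0$-formal is immediate by restricting scalars along $\P^0\subset\P$. For the converse, from a $\P^0$-algebra quasi-isomorphism $\CdR{M}\simeq\HdR{M}$ I restrict scalars along the unique retraction $\P\to\P^0$ to produce a quasi-isomorphism of the associated trivial $\P$-algebras. On the $\CdR{M}$ side, this trivial $\P$-algebra is quasi-isomorphic to the actual $\P$-algebra structure by the triviality result; on the $\HdR{M}$ side, the $\P$-algebra structure induced on cohomology from the actual structure on $\CdR{M}$ is already trivial, since taking cohomology of a trivial $\P$-algebra yields a trivial $\P$-algebra. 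Concatenating gives $\P$-formality.

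For part (1), a zig-zag of DG commutative algebra quasi-isomorphisms between $\CdR{M}$ and $\HdR{M}$ is provided by the assumption of formality. Applying the Hinich equivalence for $\C\subset\J$ exactly as in the proof of the preceding triviality corollary, this lifts to a zig-zag of $\J$-algebra quasi-isomorphisms between $\CdR{M}$, endowed with the Jacobi structure of \Cref{prop:Jacobi_J}, and $\HdR{M}$, endowed with the trivial Jacobi structure $i=j=0$. Restricting scalars along $\P^0\subset\P\to\J$ yields a $\P^0$-algebra quasi-isomorphism between $\CdR{M}$ and $\HdR{M}$ with the restricted trivial structure. To conclude $\P^0$-formality, I must verify that this restricted trivial $\P^0$-structure on $\HdR{M}$ coincides with the one induced from $\CdR{M}$. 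This follows because any operation in the image of $\P^0\to\J^0$ that is not already in $\C$ lies in $d(\J^{-1})$; by Cartan's formula $L_\eta=[d,i_\eta]$, such operations act on $\CdR{M}$ as coboundaries in $\End{\CdR{M}}$ and hence as zero on $\HdR{M}$, so the induced $\J^0$-action on $\HdR{M}$ factors through $H^0(\J)=\C$.

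The most delicate step is this final verification in part (1); the complication is absent in the Poisson analogue, where $\EBV^0=\C$, but appears here because $\J^0$ is strictly larger than $\C$, owing to the degree zero cocycle $d(j)$ representing the Lie derivative $L_\eta$.
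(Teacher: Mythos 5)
Your proof is correct and follows the route the paper intends: the paper states this corollary without proof, as an immediate consequence of the preceding triviality corollary and the Hinich equivalence coming from the quasi-isomorphism $\C\subset\J$ of \Cref{thm:Jacobi}, exactly as in the Poisson analogue \Cref{cor:general_formality}. Your extra verification in part (1) --- that the degree-zero part $\J^0$ is strictly larger than $\C$, but every element outside $\C$ lies in $d(\J^{-1})$ (e.g.\ $d(j)\mapsto L_\eta=[d,i_\eta]$) and hence maps to a coboundary in $\E{\CdR{M}}$ and acts trivially on $\HdR{M}$, so the induced $\P^0$-structure on cohomology agrees with the restricted trivial one --- is a genuine point that the paper leaves implicit, and you handle it correctly.
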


\section{Generalized Poisson manifolds}\label{sec:generalized_poisson}

A \emph{generalized Poisson manifold} is a graded manifold or supermanifold $M$ with an even polyvector field $P$ satisfying $[P,P]=0$, see \cite[\S4]{khudaverdian_voronov_2008_higher_poisson_brackets} and \cite[Definition 4.1]{braun_lazarev_2013_homotopy_bv_algebras}.

\begin{proposition}[{\cite{khudaverdian_voronov_2008_higher_poisson_brackets},\cite[Definition 4.2]{braun_lazarev_2013_homotopy_bv_algebras}, \cite[Example 2.3]{bashkirov_voronov_2017_bv_formalism_l_}}]\label{graded_manifold_CBV}
  The de Rham complex $\CdR{M}$ of a graded manifold or supermanifold $M$ equipped with a generalized Poisson structure of the form $P=P_1+P_2+\cdots$ carries a canonical commutative $BV_\infty$-algebra structure defined by the Lie derivatives $\Delta_n=L_{P_{n}}$, $n\geq 1$.
\end{proposition}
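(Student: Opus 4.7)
The plan is to verify the two defining properties of a commutative $BV_\infty$-algebra structure (\Cref{def:commutative_BV_infty_algebra}): each $\Delta_n = L_{P_n}$ must be a differential operator of degree $1-2n$ and order $\leq n+1$, and the family must satisfy the master equation
\[[d,\Delta_n] + \sum_{i=1}^{n-1} \Delta_i\Delta_{n-i} = 0, \qquad n\geq 1.\]

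For the degree and order statement I would invoke standard Cartan calculus on supermanifolds, proceeding exactly as in \Cref{prop:Poisson_EBV,prop:Jacobi_J}: the hypothesis that $P$ is even and homogeneously decomposed $P=P_1+P_2+\cdots$ forces each $P_n$ to have the parity and weight ensuring that $i_{P_n}$ is a differential operator of degree $-2n$ and order $\leq n+1$, and hence $L_{P_n}=[i_{P_n},d]$ inherits degree $1-2n$ and order $\leq n+1$. This is precisely the content of the references cited in the statement.

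For the master equation I would first observe that $[d,\Delta_n] = [d,[d,i_{P_n}]] = 0$ follows directly from $d^2 = 0$ together with the graded Jacobi identity, so it remains to establish $\sum_{i=1}^{n-1} \Delta_i\Delta_{n-i} = 0$ for each $n\geq 2$. Expanding $[P,P]=0$ according to the weight grading of $\Gamma(\Lambda^*T(M))$ yields
\[\sum_{i+j=n} [P_i, P_j] = 0, \qquad n\geq 2.\]
Applying the standard identity $L_{[X,Y]} = [L_X, L_Y]$ relating the Schouten--Nijenhuis bracket to the graded commutator of Lie derivatives (cf.\ \cite[\S3]{cartier_1994_fundamental_techniques_theory}) I obtain
\[\sum_{i+j=n} [\Delta_i, \Delta_j] = 0.\]
Since every $\Delta_k$ has odd total degree $1-2k$, the graded bracket is the anticommutator $[\Delta_i,\Delta_j] = \Delta_i\Delta_j + \Delta_j\Delta_i$, and summing over ordered pairs $(i,j)$ with $i+j=n$ produces each term twice. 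This gives $2\sum_{i=1}^{n-1} \Delta_i\Delta_{n-i} = 0$, as required.

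The main obstacle, if any, is the careful bookkeeping of signs in the supermanifold setting, where polyvector fields carry both a weight from the exterior power and an underlying $\mathbb{Z}/2$-parity from the supermanifold; but the assumption that $P$ is an even polyvector field of homogeneous weights reduces everything to the classical signs above. Beyond this the argument is essentially a repackaging of the general principle that $X\mapsto L_X$ is a graded Lie algebra morphism from polyvector fields under Schouten--Nijenhuis to derivations of $\CdR{M}$, applied component-wise to the Maurer--Cartan-type equation $[P,P]=0$.
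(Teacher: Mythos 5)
Your argument is correct. Be aware, though, that the paper offers no proof of this proposition at all: it is quoted from the cited references (the remark following it only fixes notation). What the paper does instead is re-derive the same structure by a parallel route: it first equips $\CdR{M}$ with an \emph{exact} commutative $BV_\infty$-algebra structure via the interior products $i_n=i_{P_n}$ (\Cref{prop:supermanifold_ECBV}), and then proves purely algebraically that any such structure yields a commutative $BV_\infty$-algebra with $\Delta_n=[i_n,d]$ (\Cref{prop:ECBV_is_trivialized_CBV}, packaged geometrically in \Cref{cor:supermanifold_CBV_from_ECBV}). Your verification of the master equation is essentially the computation appearing in the proof of \Cref{prop:ECBV_is_trivialized_CBV}: there $\sum_j[\Delta_j,\Delta_{n-j}]=2\sum_j\Delta_j\Delta_{n-j}=0$ is obtained by applying the differential $[-,d]$ of $\End{\CdR{M}}$ to the relation $\sum_j[i_j,[i_{n-j},d]]=0$, which encodes the weight components of $[P,P]=0$ through the Cartan formula relating the Schouten--Nijenhuis bracket to commutators of interior products and Lie derivatives; you instead invoke $L_{[X,Y]}=[L_X,L_Y]$ directly. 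The two are equivalent, and both need characteristic zero to divide by $2$. The only genuinely delicate point is the one you flag yourself: on a (super)manifold the $\mathbb{Z}$-degree $1-2n$ and order $\leq n+1$ demanded by \Cref{def:commutative_BV_infty_algebra} do not come from the naive form-degree count, and the grading conventions that reconcile them are exactly what the cited references supply; your sketch is fine provided you defer to them on that point rather than to the classical Cartan calculus alone.
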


Here we use the notation in \cite{braun_lazarev_2013_homotopy_bv_algebras}, not in \cite{bashkirov_voronov_2017_bv_formalism_l_}, where the indices of the components of $P$ are shifted by $+1$.

\begin{definition}\label{def:exact_commutative_BV_infty_algebra}
  An \emph{exact commutative $BV_\infty$-algebra} is a DG commutative algebra $A$ equipped with differential operators $i_n\colon A\to A$, $n\geq 1$, of degree $-2n$ and order $\leq n+1$ such that:
  \begin{enumerate}
    \item\label{it_ECB_infty_1} $[i_n,i_m]=0$, $n,m\geq 1$.
    \item\label{it_ECB_infty_2} $\sum_{j=1}^{n-1}[i_j,[i_{n-j},d]]=0$, $n\geq 2$.
  \end{enumerate}
\end{definition}

\begin{proposition}\label{prop:supermanifold_ECBV}
  The de Rham complex $\CdR{M}$ of a graded manifold or supermanifold $M$ equipped with a generalized Poisson structure of the form $P=P_1+P_2+\cdots$ carries a canonical exact commutative $BV_\infty$-algebra structure defined by the interior products $i_n=i_{P_{n}}$, $n\geq 1$.
\end{proposition}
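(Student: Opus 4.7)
The argument parallels the proof of Proposition~\ref{prop:Poisson_EBV}, replacing the single bivector $\pi$ with the sequence of polyvector fields $P_n$ and the identity $[\pi,\pi]=0$ with the weight-decomposed identity $[P,P]=0$. The strategy is to verify each requirement of Definition~\ref{def:exact_commutative_BV_infty_algebra} using the standard Cartan--Cartier calculus on $M$, extended to the graded/supermanifold setting.

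First, since $P_n$ is the $(n+1)$-polyvector component of $P$ (with even parity as in \cite{braun_lazarev_2013_homotopy_bv_algebras}) and the Lie derivative $\Delta_n=L_{P_n}$ has degree $1-2n$ by Proposition~\ref{graded_manifold_CBV}, Cartan's formula $L_{P_n}=[i_{P_n},d]$ forces $i_{P_n}$ to have degree $-2n$. Moreover, interior product with an $(n+1)$-polyvector is a classical differential operator of order $\leq n+1$, which satisfies Definition~\ref{def:differential_operator} with the same bound. This verifies the structural requirements on $i_n=i_{P_n}$.

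For identity~\eqref{it_ECB_infty_1}, I use the classical formula expressing the composite of interior products with polyvector fields as an interior product with their wedge: $i_\alpha i_\beta=i_{\beta\wedge\alpha}$ up to an overall sign \cite[\S3]{cartier_1994_fundamental_techniques_theory}. Since all $P_n$ are even, $P_n\wedge P_m=P_m\wedge P_n$ in $\Gamma(\Lambda^\ast T(M))$, hence the graded commutator $[i_{P_n},i_{P_m}]$ vanishes. For identity~\eqref{it_ECB_infty_2}, I apply Cartan's formula $[i_{P_{n-j}},d]=L_{P_{n-j}}$ followed by the Schouten--Nijenhuis identity $[i_{P_j},L_{P_{n-j}}]=\pm\,i_{[P_j,P_{n-j}]}$, both of which extend verbatim to the graded setting. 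Summing yields $\sum_{j=1}^{n-1}[i_{P_j},[i_{P_{n-j}},d]]=\pm\, i_{\sum_{j=1}^{n-1}[P_j,P_{n-j}]}$. The inner sum is precisely the weight-$n$ homogeneous component of $[P,P]=\sum_{j,k\geq 1}[P_j,P_k]$, which vanishes by the hypothesis $[P,P]=0$.

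The main obstacle is sign-bookkeeping. Because the $P_n$ live in different homological degrees and the Schouten bracket has its own shifted symmetry $[\alpha,\beta]=-(-1)^{(|\alpha|-1)(|\beta|-1)}[\beta,\alpha]$, one must check that the classical Cartier identities transfer with the correct signs to the graded/super case, and that the decomposition of $[P,P]=0$ produces exactly the combination appearing in~\eqref{it_ECB_infty_2}, rather than a sign-twisted variant. Once the signs are pinned down, the computation is a direct extension of the one in the proof of Proposition~\ref{prop:Poisson_EBV}.
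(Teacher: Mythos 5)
Your argument is correct and is essentially the paper's own: the paper proves this proposition simply by noting that it "follows like" \Cref{prop:Poisson_EBV,prop:Jacobi_J} via the Cartier identities relating interior products, Lie derivatives, and the Schouten bracket, which is exactly the computation you spell out (the arity-$(n+1)$ component of $[P,P]=0$ giving \Cref{def:exact_commutative_BV_infty_algebra}~\eqref{it_ECB_infty_2}, and evenness of the $P_n$ giving \eqref{it_ECB_infty_1}). The sign bookkeeping you flag is real but uniform in $j$ here because all the $i_{P_n}$ have even degree, so no term-by-term sign twist can spoil the cancellation.
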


This follows like \Cref{prop:Poisson_EBV,prop:Jacobi_J}, compare \cite[\S4]{braun_lazarev_2013_homotopy_bv_algebras} and \cite[Example 2.3]{bashkirov_voronov_2017_bv_formalism_l_}.

\begin{proposition}\label{prop:ECBV_is_trivialized_CBV}
  An exact commutative $BV_\infty$-algebra (\Cref{def:exact_commutative_BV_infty_algebra}) is also a trivialized commutative $BV_\infty$-algebra (\Cref{def:CBV/D_algebra}) with $\Delta_n=[i_n,d]$ and $\phi_n=i_n$ for $n\geq 1$.
\end{proposition}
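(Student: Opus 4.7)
The goal is to verify the two ingredients of Def.~\ref{def:CBV/D_algebra} under the prescribed choices $\Delta_n = [i_n, d]$ and $\phi_n = i_n$: that the $\Delta_n$ endow $A$ with a commutative $BV_\infty$-algebra structure (Def.~\ref{def:commutative_BV_infty_algebra}), and that together with the $\phi_n$ they satisfy the trivialization relation \eqref{eq:phi_CBV/D}. The plan is to handle both ingredients simultaneously by a single manipulation with the generating series $\phi(z) = \sum_{n \geq 1} i_n z^n$ and $\Delta(z) = \sum_{n \geq 1} \Delta_n z^n$ in $\End{A}[[z]]$.

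I would first dispatch the degree and order bookkeeping: $\Delta_n = [i_n, d]$ has degree $|i_n| + |d| = 1 - 2n$, and since a commutator of differential operators on a commutative algebra has order at most the sum of orders minus one, $\Delta_n$ has order at most $(n+1) + 1 - 1 = n + 1$, as required.

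The heart of the proof will be the identity
\[
  e^{\phi(z)}\, d\, e^{-\phi(z)} \;=\; \sum_{k \geq 0} \frac{1}{k!}\operatorname{ad}_{\phi(z)}^k(d) \;=\; d + \operatorname{ad}_{\phi(z)}(d) \;=\; d + \Delta(z).
\]
The first equality is the standard exponential/adjoint identity, so the real content is the collapse of the sum to its $k \leq 1$ terms; this is the step I expect to be the main obstacle. I would argue that $\operatorname{ad}_{\phi(z)}^2(d) = 0$: the coefficient of $z^n$ in $[\phi(z), [\phi(z), d]]$ is exactly $\sum_{j=1}^{n-1}[i_j, [i_{n-j}, d]]$, which vanishes for $n \geq 2$ by Def.~\ref{def:exact_commutative_BV_infty_algebra}\eqref{it_ECB_infty_2} and is empty for $n \leq 1$. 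Then $\operatorname{ad}_{\phi(z)}^k(d) = 0$ for all $k \geq 2$ follows automatically. Reading off the coefficient of $z^n$ in the displayed identity produces exactly the relation \eqref{eq:phi_CBV/D}.

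Finally, squaring the displayed identity yields $(d + \Delta(z))^2 = e^{\phi(z)} d^2 e^{-\phi(z)} = 0$, so $[d, \Delta(z)] + \Delta(z)^2 = 0$; extracting the coefficient of $z^n$ is precisely the commutative $BV_\infty$ relation $[d, \Delta_n] + \sum_{i=1}^{n-1}\Delta_i \Delta_{n-i} = 0$ from Def.~\ref{def:commutative_BV_infty_algebra}. The only sign bookkeeping occurs in the identification of the coefficient of $z^n$ in $\operatorname{ad}_{\phi(z)}^2(d)$, and it is unproblematic because every $i_n$ has even degree (so the graded commutators between the $i_n$'s reduce to ordinary commutators); condition \eqref{it_ECB_infty_1} is not needed for this argument.
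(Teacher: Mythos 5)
Your proposal is correct and rests on the same key observation as the paper's proof, namely that \Cref{def:exact_commutative_BV_infty_algebra} \eqref{it_ECB_infty_2} kills all iterated brackets $[\phi_{q_1},\dots[\phi_{q_p},d]\dots]$ with $p\geq 2$, so the conjugation series collapses to $d+\Delta(z)$; like the paper, you also correctly note that \eqref{it_ECB_infty_1} is never used. The only (cosmetic) difference is that you obtain the relations $[d,\Delta_n]+\sum_{i=1}^{n-1}\Delta_i\Delta_{n-i}=0$ by squaring the identity $e^{\phi(z)}de^{-\phi(z)}=d+\Delta(z)$, whereas the paper verifies $[d,\Delta_n]=0$ and $\sum_{i=1}^{n-1}\Delta_i\Delta_{n-i}=0$ separately by applying $[-,d]$ to \eqref{it_ECB_infty_2}.
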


\begin{proof}
  The operator $\Delta_n=[i_n,d]$ is of degree $1-2n$ and order $\leq n+1$ for the same reasons as in the proof of \Cref{lem:EBV_is_BV}. Applying the differential $[-,d]$ to \Cref{def:exact_commutative_BV_infty_algebra} \eqref{it_ECB_infty_2} we obtain 
  \begin{align*}
    0 & =\sum_{j=1}^{n-1}[[i_j,d],[i_{n-j},d]]=\sum_{j=1}^{n-1}[\Delta_j,\Delta_{n-j}]                                  \\
      & =\sum_{j=1}^{n-1}\left(\Delta_j\Delta_{n-j}+\Delta_{n-j}\Delta_j\right)=2\sum_{j=1}^{n-1}\Delta_j\Delta_{n-j}.
  \end{align*}
  Since \[[d,\Delta_n]=[d,[i_n,d]]=0,\] we conclude that the properties required in \Cref{def:commutative_BV_infty_algebra} are satisfied, so the operators $\Delta_n$, $n\geq1$, define a commutative $BV_\infty$-algebra structure.
  
  Moreover, by \Cref{def:exact_commutative_BV_infty_algebra} \eqref{it_ECB_infty_2}, in the following first summation, the factors $p\geq 2$ vanish, so
  \begin{align*}
    \sum_{p=1}^n\frac{1}{p!}\sum_{\substack{q_1+\cdots+q_p=n \\q_1,\dots,q_p\geq 1}}
    [i_{q_1},\dots[i_{q_{p-1}},[i_{q_p},d]]\dots]=[i_n,d]=\Delta_n.
  \end{align*}
  This is \eqref{eq:phi_CBV/D}.
  
  Notice that we have not used \Cref{def:exact_commutative_BV_infty_algebra} \eqref{it_ECB_infty_1} anywhere in this proof. Hence, this result would also hold if we excluded \eqref{it_ECB_infty_1} from \Cref{def:exact_commutative_BV_infty_algebra}.
\end{proof}

\begin{corollary}\label{cor:supermanifold_CBV_from_ECBV}
  The commutative $BV_\infty$-algebra structure on the de Rham complex $\CdR{M}$ of a graded manifold or supermanifold $M$ equipped with a generalized Poisson structure of the form $P=P_1+P_2+\cdots$ (\Cref{graded_manifold_CBV}) is induced by its exact commutative $BV_\infty$-algebra structure (\Cref{prop:supermanifold_ECBV}) and \Cref{prop:ECBV_is_trivialized_CBV}.
\end{corollary}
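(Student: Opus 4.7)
The plan is to compare the two commutative $BV_\infty$-algebra structures on $\CdR{M}$ operation by operation. Both structures share the same underlying DG commutative algebra (the de Rham complex with exterior product and exterior differential), so only the higher operators $\Delta_n$ need to be matched. The structure of \Cref{graded_manifold_CBV} is defined by $\Delta_n = L_{P_n}$, while the one obtained by feeding the exact commutative $BV_\infty$-algebra structure of \Cref{prop:supermanifold_ECBV} into \Cref{prop:ECBV_is_trivialized_CBV} is defined by $\Delta_n = [i_{P_n}, d]$ and $\phi_n = i_{P_n}$.

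Hence the corollary reduces to the identity $L_{P_n} = [i_{P_n}, d]$ for each component $P_n$ of the generalized Poisson structure. This is nothing but Cartan's homotopy formula, extended from vector fields to arbitrary polyvector fields, which in this graded/super setting reads $L_X = i_X d - (-1)^{|X|} d\, i_X = [i_X, d]$ for any polyvector field $X$. The same formula was already the key ingredient in the proofs of \Cref{prop:Poisson_EBV} and \Cref{prop:Jacobi_J}, with the relevant references being \cite{cartier_1994_fundamental_techniques_theory} and \cite[\S4]{braun_lazarev_2013_homotopy_bv_algebras}.

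There is no real obstacle: once the Cartan formula for polyvector fields is granted, the statement is immediate, since both structures are determined by the sequence of operators $\Delta_n$ on top of the same commutative algebra. If anything requires care, it is only checking that the sign conventions used to define $L_{P_n}$ and $i_{P_n}$ in the super/graded setting are consistent with those used to package $\Delta_n = [i_n, d]$ in \Cref{prop:ECBV_is_trivialized_CBV}; this amounts to the same sign bookkeeping already carried out in \Cref{rem:fix_sign} for the Jacobi case.
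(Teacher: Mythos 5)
Your proposal is correct and matches the paper's (implicit) argument: the paper states this as an immediate consequence of \Cref{prop:supermanifold_ECBV} and \Cref{prop:ECBV_is_trivialized_CBV}, the whole content being precisely the Cartan formula $L_{P_n}=[i_{P_n},d]$ for polyvector fields, with the sign conventions of \cite{cartier_1994_fundamental_techniques_theory} as you note. Nothing further is needed.
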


We can now derive one of the main results of \cite{braun_lazarev_2013_homotopy_bv_algebras}.

\begin{corollary}[{\cite[Theorem 4.4]{braun_lazarev_2013_homotopy_bv_algebras}}]
  The commutative $BV_\infty$-algebra structure on the de Rham complex $\CdR{M}$ of a graded manifold or supermanifold $M$ equipped with a generalized Poisson structure of the form $P=P_1+P_2+\cdots$ (\Cref{graded_manifold_CBV}) satisfies the degeneration property.
\end{corollary}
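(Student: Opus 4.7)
The plan is to observe that the corollary is essentially a free consequence of the results developed in this section. The degeneration property for a commutative $BV_\infty$-algebra is precisely the existence of Hodge-to-de-Rham degeneration data, i.e., operators $\phi_n$ making it into a trivialized commutative $BV_\infty$-algebra in the sense of \Cref{def:CBV/D_algebra}. So it suffices to exhibit such a trivialization on $\CdR{M}$ for the commutative $BV_\infty$-structure of \Cref{graded_manifold_CBV}.

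The key chain of implications to assemble is the following. First, \Cref{prop:supermanifold_ECBV} equips $\CdR{M}$ with an exact commutative $BV_\infty$-algebra structure whose operators are the interior products $i_n = i_{P_n}$. Second, \Cref{prop:ECBV_is_trivialized_CBV} promotes any exact commutative $BV_\infty$-algebra to a trivialized one, by taking $\Delta_n = [i_n, d]$ and $\phi_n = i_n$; in our situation this yields $\Delta_n = [i_{P_n}, d] = L_{P_n}$ and $\phi_n = i_{P_n}$. Third, by \Cref{cor:supermanifold_CBV_from_ECBV}, the underlying commutative $BV_\infty$-algebra of this trivialized structure is precisely the one of \Cref{graded_manifold_CBV}. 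Consequently, the family $\{\phi_n = i_{P_n}\}_{n\geq 1}$ provides the required Hodge-to-de-Rham degeneration data, which is exactly the degeneration property asserted in the statement.

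The main (minor) obstacle I expect is matching conventions: one must check that the notion of ``degeneration property'' used in \cite{braun_lazarev_2013_homotopy_bv_algebras} is equivalent to the existence of a formal power series $\phi(z) = \sum_{n \geq 1} \phi_n z^n$ with $e^{\phi(z)} d\, e^{-\phi(z)} = d + \sum_{n \geq 1} \Delta_n z^n$ as in \Cref{def:CBV/D_algebra}. This is standard, but it is the only point where the argument interfaces with another reference rather than a result proved earlier in the paper. Once this identification is made, no further computation is needed: the entire proof amounts to citing \Cref{prop:supermanifold_ECBV,prop:ECBV_is_trivialized_CBV,cor:supermanifold_CBV_from_ECBV} in sequence.
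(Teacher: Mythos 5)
Your proposal is correct and follows exactly the paper's own argument: the paper likewise derives the corollary from \Cref{prop:ECBV_is_trivialized_CBV} and \Cref{cor:supermanifold_CBV_from_ECBV}, together with the observation that the degeneration property is equivalent to the existence of a trivialization (citing \cite[Definition 3.13 and Remark 3.14]{braun_lazarev_2013_homotopy_bv_algebras} and \cite[Theorem 2.1]{dotsenko_shadrin_vallette_2015_rham_cohomology_homotopy} for the convention-matching point you correctly flag as the only external interface).
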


This follows from \Cref{prop:ECBV_is_trivialized_CBV} and \Cref{cor:supermanifold_CBV_from_ECBV}, since the degeneration property for commutative $BV_\infty$-algebras (\cite[Definition 3.13 and Remark 3.14]{braun_lazarev_2013_homotopy_bv_algebras} and \cite[Proposition 1.6]{dotsenko_shadrin_vallette_2015_rham_cohomology_homotopy}) is equivalent to the existence of a trivialization, see \cite[Theorem 2.1]{dotsenko_shadrin_vallette_2015_rham_cohomology_homotopy}. As a consequence of this result (\cite[Theorem 4.14 (1)]{katzarkov_kontsevich_pantev_2008_hodge_theoretic_aspects}), the underlying shifted $\Lie_\infty$-algebra structure is formal.

\begin{definition}\label{def:exact_CBV_operad}
  The \emph{exact commutative $BV_\infty$ operad} $\ECBV$ is the DG operad generated by $\mu\in\ECBV(2)^0$ and $i_n\in\ECBV(1)^{-2n}$, $n\geq 1$, with
  \[\mu\cdot (1\ 2)=\mu\]
  subject to the relations:
  \begin{enumerate}
    \item $d(\mu)=0$.
    \item $\mu\circ_1\mu-\mu\circ_2\mu=0$.
          \item\label{it:i_commutators} $[i_n,i_m]=0$, $n,m\geq 1$.
          \item\label{it:i_d(i)_commutators} $\sum_{j=1}^{n-1}[i_j,d(i_{n-j})]=0$, $n\geq 2$,
          \item\label{it:i_n_diff_op} $\sum_{\substack{p+q=n+1\\p\geq1}}(-1)^p\sum_{\sigma\in\shuffle{p}{q}}(\mu^{q+1}\circ_1(i_n\mu^{p}))\cdot\sigma^{-1}=0$, $n\geq 1$.
  \end{enumerate}
  Here \eqref{it:i_commutators} and \eqref{it:i_d(i)_commutators} correspond to \Cref{def:exact_commutative_BV_infty_algebra} \eqref{it_ECB_infty_1} and \eqref{it_ECB_infty_2}, respectively, and \eqref{it:i_n_diff_op} is the equation in \Cref{def:differential_operator}. The suboperad generated by $\mu$ is the commutative operad $\C\subset\ECBV$.
\end{definition}

The operadic counterparts of \Cref{prop:ECBV_is_trivialized_CBV} and \Cref{cor:supermanifold_CBV_from_ECBV} are the following two results.

\begin{proposition}\label{prop:CBV/D_to_ECBV}
  There is a morphism of DG operads $\CBV/\Delta\to\ECBV$ defined by $\mu\mapsto\mu$, $\Delta_n\mapsto d(i_n)$, and $\phi_n\mapsto i_n$, $n\geq 1$.
\end{proposition}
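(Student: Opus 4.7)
Plan: This proposition is the commutative $BV_\infty$ analogue of \Cref{lem:BV/D_to_EBV} and \Cref{lem:CBV/D_to_J}, and the proof follows the same pattern. I define the morphism on the generators of $\CBV/\Delta$ as declared in the statement, then verify that the defining relations from \Cref{def:CBV_operad,def:trivialized_CBV_operad} are each sent to identities holding in $\ECBV$. The relations $d(\mu)=0$ and $\mu\circ_1\mu=\mu\circ_2\mu$ are preserved on the nose.

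For the remaining $\CBV$-relations, the image of $d(\Delta_n)+\sum_{i=1}^{n-1}\Delta_i\Delta_{n-i}=0$ becomes $\sum_{i=1}^{n-1}d(i_i)d(i_{n-i})=0$ (the first term vanishes because $d^2=0$). I obtain this from the $\ECBV$ relation \Cref{def:exact_CBV_operad} \eqref{it:i_d(i)_commutators}: applying $d$ to $\sum_{j=1}^{n-1}[i_j,d(i_{n-j})]=0$ yields $\sum_{j=1}^{n-1}[d(i_j),d(i_{n-j})]=0$, and since each $d(i_j)$ has odd degree its graded commutator coincides with the anticommutator, so the symmetric sum $\sum d(i_i)d(i_{n-i})$ is forced to vanish. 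The shuffle relation \Cref{def:CBV_operad} \eqref{it:D_n_diff_op} maps to the same shuffle relation for $d(i_n)$, which I obtain by differentiating \Cref{def:exact_CBV_operad} \eqref{it:i_n_diff_op} using $d(\mu)=0$ and graded Leibniz.

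The heart of the argument is the new trivialization relation $d(\phi_n)+\sum_{p=2}^n\frac{1}{p!}\sum_{q_1+\cdots+q_p=n}[\phi_{q_1},\dots[\phi_{q_{p-1}},d(\phi_{q_p})]\dots]=-\Delta_n$: matching its image reduces, modulo sign conventions, to showing that for each $p\geq 2$ the iterated commutator sum $\sum_{q_1+\cdots+q_p=n}[i_{q_1},\dots[i_{q_{p-1}},d(i_{q_p})]\dots]$ vanishes in $\ECBV$. Since the $i_k$'s pairwise commute in the graded sense by \Cref{def:exact_commutative_BV_infty_algebra} \eqref{it_ECB_infty_1}, graded Jacobi lets me collapse the two innermost operators, so that after fixing the outer indices $q_1,\dots,q_{p-2}$ the inner partial sum becomes an instance of $\sum_{q_{p-1}+q_p=m}[i_{q_{p-1}},d(i_{q_p})]$ with $m=n-q_1-\cdots-q_{p-2}$, which is zero by \Cref{def:exact_CBV_operad} \eqref{it:i_d(i)_commutators}; applying the remaining outer commutators preserves zero, and induction on $p$ completes the verification. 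The main obstacle will be consistent sign bookkeeping: relating the operadic differential $d(i_n)$ to the operator-theoretic bracket $[d,i_n]=-[i_n,d]$ used in \Cref{prop:ECBV_is_trivialized_CBV}, and matching the sign conventions already established in \Cref{lem:BV_to_EBV,lem:CBV_to_J,lem:BV/D_to_EBV,lem:CBV/D_to_J}.
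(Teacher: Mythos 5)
Your proposal is correct and is essentially the paper's own route: the paper gives no separate proof of this proposition, treating it as the operadic restatement of \Cref{prop:ECBV_is_trivialized_CBV}, whose proof is exactly your verification that for $p\ge 2$ the iterated brackets vanish because, for fixed $q_1,\dots,q_{p-2}$, the innermost partial sum over $q_{p-1}+q_p=m$ is an instance of \Cref{def:exact_CBV_operad}~\eqref{it:i_d(i)_commutators}. Two small points. First, you do not need \Cref{def:exact_commutative_BV_infty_algebra}~\eqref{it_ECB_infty_1} or the graded Jacobi identity to ``collapse'' anything here --- the inner sum is already literally of the required form --- and the paper explicitly remarks that relation \eqref{it_ECB_infty_1} is never used in this verification. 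Second, your worry about sign bookkeeping is well placed: since $\Delta_n=[i_n,d]=-d(i_n)$ under the paper's convention $d(f)=[d,f]$, and since the trivialization relation of \Cref{def:trivialized_CBV_operad} would otherwise collapse to $2d(i_n)=0$, the assignment must be $\Delta_n\mapsto -d(i_n)$, in line with \Cref{lem:BV_to_EBV,lem:CBV_to_J}; the sign as printed in the statement appears to be a typo rather than an obstacle to your argument.
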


\begin{proposition}\label{prop:generalized_poisson_factorization}
  Given a graded manifold or supermanifold $M$ equipped with a generalized Poisson structure of the form $P=P_1+P_2+\cdots$, the morphism $\CBV\to\E{\CdR{M}}$ determined by \Cref{graded_manifold_CBV} factors as \[\CBV\longrightarrow\CBV/\Delta\longrightarrow\ECBV\longrightarrow\E{\CdR{M}},\]
  where the first two arrows are the ones in \Cref{def:trivialized_CBV_operad} and \Cref{prop:CBV/D_to_ECBV}, and the third arrow is given by \Cref{prop:supermanifold_ECBV}.
\end{proposition}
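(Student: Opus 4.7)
The plan is to recognize \Cref{prop:generalized_poisson_factorization} as the operadic reformulation of \Cref{cor:supermanifold_CBV_from_ECBV}. All three arrows in the claimed factorization have already been constructed as DG operad morphisms (in \Cref{def:trivialized_CBV_operad}, \Cref{prop:CBV/D_to_ECBV}, and \Cref{prop:supermanifold_ECBV}) and are specified explicitly on generators. Since $\CBV$ is generated by $\mu$ and the $\Delta_n$ for $n\geq 1$, the whole proof reduces to verifying, on each of these generators, that the composite of the three arrows coincides with the direct morphism $\CBV\to\E{\CdR{M}}$ of \Cref{graded_manifold_CBV}.

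First I would track $\mu$: the inclusion $\CBV\hookrightarrow\CBV/\Delta$ and the morphism $\CBV/\Delta\to\ECBV$ both fix $\mu$, and the final arrow $\ECBV\to\E{\CdR{M}}$ sends $\mu$ to the exterior product on $\CdR{M}$, which agrees with \Cref{graded_manifold_CBV}. Next I would track $\Delta_n$: it is fixed by the inclusion, then sent to $d(i_n)\in\ECBV$ by \Cref{prop:CBV/D_to_ECBV}, and finally to the operadic differential of $i_{P_n}$ in $\E{\CdR{M}}$, which is the graded commutator of $i_{P_n}$ with the de Rham differential. Cartan's homotopy formula, extended to polyvector fields as recalled in \cite[\S3]{cartier_1994_fundamental_techniques_theory}, identifies this commutator (up to the usual sign convention) with the Lie derivative $L_{P_n}$, which is precisely the image of $\Delta_n$ under \Cref{graded_manifold_CBV}.

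The only substantive input beyond bookkeeping is the Cartan identity $L_{P_n}=[i_{P_n},d]$ for polyvector fields, but this is classical and has already been used to establish the Poisson and Jacobi analogues \Cref{prop:Poisson_EBV} and \Cref{prop:Jacobi_J}. In fact, the entire verification can be viewed as an operadic restatement of \Cref{prop:ECBV_is_trivialized_CBV} applied to the exact commutative $BV_\infty$-algebra structure of \Cref{prop:supermanifold_ECBV}, with no further obstacle.
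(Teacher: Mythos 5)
Your proposal is correct and follows essentially the same route as the paper, which presents this proposition as the operadic restatement of \Cref{cor:supermanifold_CBV_from_ECBV} (itself resting on \Cref{prop:ECBV_is_trivialized_CBV} and \Cref{prop:supermanifold_ECBV}), with the factorization checked on the generators $\mu$ and $\Delta_n$ via the Cartan formula $L_{P_n}=[i_{P_n},d]$. Your explicit generator chase, including the caveat about the sign convention for $d(i_n)$ versus $[i_{P_n},d]$, is exactly the bookkeeping the paper leaves implicit.
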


We now proceed with the main \namecref{thm:ECBV} of this section.

\begin{theorem}\label{thm:ECBV}
  The DG operad inclusion $\C\subset\ECBV$ is a quasi-isomorphism.
\end{theorem}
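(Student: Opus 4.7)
The plan is to mimic the proofs of \Cref{thm:Poisson} and \Cref{thm:Jacobi}: present $\ECBV$ as a quotient of a relatively free extension $\P=\C\amalg\F(C)$ by an appropriate DG ideal, equip $C$ with a contracting homotopy in $\SMod$, and verify the hypothesis of \Cref{thm:ideal}. Concretely, I would take $C=\bigoplus_{n\geq 1}C_n$ concentrated in arity $1$, where each $C_n$ is the two-term acyclic complex with $k$ in degrees $-2n$ and $-2n+1$, generated by $i_n$ and $d(i_n)$, respectively. The $\sym$-module SDR $\SDR{0}{C}[][][h]$ is then determined by $h(d(i_n))=i_n$ and $h(i_n)=0$ for all $n\geq 1$.

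Next, following \Cref{def:exact_CBV_operad}, the operad $\ECBV$ is $\P/I$ where $I$ is the DG ideal generated by the weight-homogeneous relations
\begin{enumerate}
\item $[i_n,i_m]$, for $n,m\geq 1$ (weight $2$);
\item $\sum_{j=1}^{n-1}[i_j,d(i_{n-j})]$, for $n\geq 2$ (weight $2$);
\item the differential-operator relation of order $n+1$ for $i_n$, for $n\geq 1$ (weight $1$).
\end{enumerate}
To apply \Cref{thm:ideal}, I need to check $h_C(S)\subset I$ for this generating set $S$. For the weight $1$ relations (iii), the formula in \Cref{thm:operad_SDR}\eqref{it:h_composition} reduces $h_C$ of a product of one $i_n$ with several copies of $\mu$ to terms each containing $h(i_n)=0$, so these vanish immediately.

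For the weight $2$ relations, the key computation is an iterated application of \Cref{thm:operad_SDR}\eqref{it:h_composition} with $v=w=1$. Since the degree of each $i_n$ is even, the brackets are graded commutators, and one finds
\[
h_C(i_n i_m)=\tfrac{1}{2}h(i_n)i_m+\tfrac{1}{2}i_n h(i_m)=0,
\]
so $h_C([i_n,i_m])=0$ and relation (i) is mapped to $0\in I$. For (ii), a similar computation gives
\[
h_C\bigl(i_j d(i_{n-j})\bigr)=\tfrac{1}{2}i_j i_{n-j},\qquad
h_C\bigl(d(i_{n-j})i_j\bigr)=\tfrac{1}{2}i_{n-j}i_j,
\]
so that
\[
h_C\!\left(\sum_{j=1}^{n-1}[i_j,d(i_{n-j})]\right)=\tfrac{1}{2}\sum_{j=1}^{n-1}[i_j,i_{n-j}],
\]
which is a sum of generators of type (i) and hence lies in $I$.

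The main (mildly technical) point is precisely this last step: $h_C$ does not annihilate the generators of type (ii), but maps them into the subideal generated by the type (i) relations. Once this is observed, all hypotheses of \Cref{thm:ideal} are in place: $S$ is weight-homogeneous of positive weight and $h_C(S)\subset I$. Consequently, the composite $\C\hookrightarrow\P\twoheadrightarrow\P/I=\ECBV$ is a quasi-isomorphism, which is exactly the inclusion in the statement.
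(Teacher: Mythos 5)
Your proposal is correct and follows essentially the same route as the paper's proof: the same complex $C=\bigoplus_{n\geq 1}C_n$ with the same SDR, the same generating set for $I$, and crucially the same observation that $h_C$ sends the relation $\sum_{j=1}^{n-1}[i_j,d(i_{n-j})]$ to $\tfrac{1}{2}\sum_{j=1}^{n-1}[i_j,i_{n-j}]\in I$ rather than to zero, so that \Cref{thm:ideal} still applies.
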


\begin{proof}
  We consider the cochain complex $C=\bigoplus_{n\geq 1}C_n$, where
  \begin{center}
    \begin{tikzpicture}[xscale=1.3]
      \node at (-1,0) {$C_n\colon$};
      \node (C0) at (0,0) {$\cdots$};
      \node (C1) at (1,0) {$0$};
      \node (C2) at (2,0) {$k$};
      \node (C3) at (3,0) {$k$};
      \node (C4) at (4,0) {$0$};
      \node (C5) at (5,0) {$\cdots$};
      \draw[->] (C0) -- (C1);
      \draw[->] (C1) -- (C2);
      \draw[->] (C2) -- (C3);
      \draw[->] (C3) -- (C4);
      \draw[->] (C4) -- (C5);
      \node at (-.1,.5) {\scriptsize degree};
      \node at (.9,.5) {$\scriptstyle -1-2n$};
      \node at (1.9,.5) {$\scriptstyle -2n$};
      \node at (3,.5) {$\scriptstyle 1-2n$};
      \node at (4,.5) {$\scriptstyle 0$};
      \node at (5,.5) {$\cdots$};
      \node at (-.3,-.5) {\scriptsize generators};
      \node (i) at (2,-.5) {$i_n$};
      \node (di) at (3,-.505) {$d(i_n)$};
      \draw[|->] (i) -- (di);
    \end{tikzpicture}
  \end{center}
  We have an SDR
  \begin{center}
    \SDR{0}{C}[][][h]
  \end{center}
  given by
  \[h(i_n)=0,\qquad hd(i_n)=i_n,\qquad n\geq1.\]
  We consider $C$ as an $\sym$-module concentrated in arity $1$, so the previous SDR is an $\sym$-module SDR. The operad $\ECBV$ is the quotient of $\P=\C\amalg\F(C)$ by the DG ideal $I$ generated by 
  \begin{enumerate}
    \item $[i_n,i_m]$, $n,m\geq 1$.
          \item\label{it:last_proof_2} $\sum_{j=1}^{n-1}[i_j,d(i_{n-j})]$, $n\geq 2$,
    \item $\sum_{\substack{p+q=n+1\\p\geq1}}(-1)^p\sum_{\sigma\in\shuffle{p}{q}}(\mu^{q+1}\circ_1(i_n\mu^{p}))\cdot\sigma^{-1}$, $n\geq 1$.
  \end{enumerate}
  These generators are weight-homogeneous. The first two items have weight $2$, and the third one has weight $1$. The homotopy $h_C$ of \Cref{thm:operad_SDR} maps the first and the third items to $0\in I$. 
  Indeed, the claim for the third item is easy because it has weight $1$ and $h(i_n)=0$. For the first item, by \Cref{thm:operad_SDR} \eqref{it:h_composition},
  \begin{align*}
    h([i_n,i_m]) & =h(i_ni_m-i_mi_n)                                                                    \\
                 & =\frac{1}{2}h(i_n)i_m+\frac{1}{2}i_nh(i_m)-\frac{1}{2}h(i_m)i_n-\frac{1}{2}i_mh(i_n) \\
                 & =0+0-0+0                                                                             \\
                 & =0.
  \end{align*}
  As for the second item, again by \Cref{thm:operad_SDR} \eqref{it:h_composition},
  \begin{align*}
    h([i_j,d(i_{n-j})]) & = h(i_jd(i_{n-j}))-h(d(i_{n-j})i_j)                                                                        \\
                        & = \frac{1}{2}h(i_j)d(i_{n-j})+\frac{1}{2}i_jhd(i_{n-j})-\frac{1}{2}hd(i_{n-j})i_j+\frac{1}{2}i_{n-j}h(i_j) \\
                        & =0+\frac{1}{2}i_ji_{n-j}-\frac{1}{2}i_{n-j}i_j+0                                                           \\
                        & =\frac{1}{2}[i_j,i_{n-j}]\in I,
  \end{align*}
  so $h\eqref{it:last_proof_2}\in I$ as well, and \Cref{thm:operad_SDR} applies. Notice that here we use \Cref{def:exact_commutative_BV_infty_algebra} \eqref{it_ECB_infty_1}, which we did not need in the proof of \Cref{prop:ECBV_is_trivialized_CBV}.
\end{proof}

We now consider triviality and formality results. 

The following corollary can be applied to $\P=\CBV$ the commutative $BV_\infty$-operad, see \Cref{prop:generalized_poisson_factorization}, and also to  $\P=\Lie_\infty[1]$ the shifted $\Lie_\infty$-operad since there is an operad morphism
$\Lie_\infty[1]\to\CBV$, see \cite{kravchenko_2000_deformations_batalin_vilkovisky} or \cite[Corollary 3.3]{braun_lazarev_2013_homotopy_bv_algebras}.

\begin{corollary}
  Given a graded manifold or supermanifold $M$ equipped with a generalized Poisson structure of the form $P=P_1+P_2+\cdots$, a graded operad $\P$ concentrated in degrees $\leq 0$, and an operad morphism $\P\to\ECBV$, the $\P$-algebra structure on $\CdR{M}$ obtained from \Cref{prop:supermanifold_ECBV} by restriction of scalars is quasi-isomorphic to the trivial $\P$-algebra structure.
\end{corollary}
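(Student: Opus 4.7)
The plan is to mirror the proof of \Cref{cor:P-trivial}, replacing $\EBV$ with $\ECBV$ and invoking \Cref{thm:ECBV} in place of \Cref{thm:Poisson}. First, I would observe that $\ECBV$ is concentrated in non-positive degrees: its generators $\mu$ and $i_n$ sit in degrees $0$ and $-2n$ respectively. Hence its degree $0$ suboperad coincides with $\C\subset\ECBV$, and there is a unique retraction $\ECBV\twoheadrightarrow\C$ that kills every $i_n$. Together with the retraction $\P\twoheadrightarrow\P^0$ and the given morphism $\P\to\ECBV$, this assembles into the commutative diagram
\[
\begin{tikzcd}
\P^0\ar[r]\ar[d] & \C\ar[d]\\
\P\ar[r]\ar[d] & \ECBV\ar[d]\ar[rdd, bend left, "\text{\Cref{prop:supermanifold_ECBV}}"]\\
\P^0\ar[r] & \C\ar[rd,"\parbox{1cm}{\scriptsize exterior product}"']\\
& & \E{\CdR{M}}
\end{tikzcd}
\]
in which both squares commute by construction.

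Next, I would note that although the resulting triangle $\ECBV\rightrightarrows\E{\CdR{M}}$ does not commute, both composites endow $\CdR{M}$ with an exact commutative $BV_\infty$-algebra structure sharing the same underlying DG commutative algebra (the de Rham complex with exterior product and exterior differential). The top composite uses $i_n=i_{P_n}$, while the bottom composite sets every $i_n=0$. This is the point where the main tool enters: by \Cref{thm:ECBV}, the inclusion $\C\subset\ECBV$ is a quasi-isomorphism of DG operads, so \cite[Theorem 4.7.4]{hinich_1997_homological_algebra_homotopy} implies that restriction of scalars along it induces an equivalence of homotopy categories between exact commutative $BV_\infty$-algebras and DG commutative algebras. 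Since the two exact commutative $BV_\infty$-algebra structures constructed above restrict to the same DG commutative algebra structure, they must be quasi-isomorphic as $\ECBV$-algebras.

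Finally, restricting scalars further along $\P\to\ECBV$, the two $\ECBV$-algebra structures on $\CdR{M}$ yield two quasi-isomorphic $\P$-algebra structures: the upper route gives the structure from \Cref{prop:supermanifold_ECBV}, while the lower route factors through $\P^0$ and therefore produces, by definition (\Cref{def:trivial_algebras}), the trivial $\P$-algebra structure. This proves the corollary.

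I expect the only nontrivial step to be the invocation of \cite[Theorem 4.7.4]{hinich_1997_homological_algebra_homotopy}, since it is the engine that converts the abstract operadic quasi-isomorphism $\C\hookrightarrow\ECBV$ into a concrete comparison of two algebra structures on a single complex; the rest is bookkeeping entirely parallel to the Poisson and Jacobi cases already carried out in the paper.
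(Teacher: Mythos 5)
Your proof is correct and follows exactly the route the paper intends: the paper's own proof is a one-line ``like that of \Cref{cor:P-trivial}, \emph{mutatis mutandis}'', and you have carried out precisely that adaptation, replacing $\EBV$ by $\ECBV$, \Cref{thm:Poisson} by \Cref{thm:ECBV}, and reusing the Hinich equivalence-of-homotopy-categories argument from the proof of \Cref{cor:trivial_hypercommutative_cohomology_strict}. Nothing to add.
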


The proof is like that of \Cref{cor:P-trivial}, \emph{mutatis mutandis}.

The following corollary is mainly interesting for $\P=\CBV$.

\begin{corollary}
  Let $M$ be a graded manifold or supermanifold equipped with a generalized Poisson structure of the form $P=P_1+P_2+\cdots$, $\P$ a graded operad concentrated in degrees $\leq 0$, $\P^0\subset\P$ its degree $0$ suboperad, and $\P\to\ECBV$ an operad morphism. We use it together with \Cref{prop:supermanifold_ECBV} to equip $\CdR{M}$ with a $\P$-algebra structure. It has an underlying $\P^0$-algebra structure obtained by restriction of scalars along the inclusion of the degree $0$ part $\P^0\subset\P$. The following statements hold:
  \begin{enumerate}
    \item If $M$ is formal then it is also $\P^0$-formal.
    \item $M$ is $\P$-formal if and only if it is $\P^0$-formal.
  \end{enumerate}
\end{corollary}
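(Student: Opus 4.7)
The plan is to follow the same template as the analogous formality corollary for Poisson manifolds (\Cref{cor:general_formality}), replacing $\EBV$ with $\ECBV$ and invoking \Cref{thm:ECBV} in place of \Cref{thm:Poisson}. First I would observe that the degree $0$ part of $\ECBV$ is exactly the commutative operad $\C$: from \Cref{def:exact_CBV_operad} the only degree $0$ generator is $\mu$, and the relations in degree $0$ are associativity and commutativity. Since $\P$ is concentrated in degrees $\leq 0$, the operad morphism $\P\to\ECBV$ restricts to a morphism $\P^0\to\C$, and hence the $\P^0$-algebra structure on $\CdR{M}$ is the restriction of scalars along $\P^0\to\C$ of the plain commutative algebra structure given by the exterior product.

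For part (1), suppose $M$ is formal, i.e.\ there is a zigzag of DG commutative algebra quasi-isomorphisms between $\CdR{M}$ and $\HdR{M}$. Restricting scalars along $\P^0\to\C$ produces a zigzag of DG $\P^0$-algebra quasi-isomorphisms between the same two objects, so $M$ is $\P^0$-formal.

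For part (2), the implication $\P$-formal $\Rightarrow$ $\P^0$-formal is immediate by restricting any $\P$-quasi-isomorphism along the inclusion $\P^0\hookrightarrow\P$. The converse is the interesting direction: I would combine $\P^0$-formality with the preceding corollary, which states that the actual $\P$-algebra structure on $\CdR{M}$ is quasi-isomorphic to the trivial one (that corollary is in turn a consequence of \Cref{thm:ECBV} via Hinich's change-of-operad theorem, exactly as in \Cref{cor:trivial_hypercommutative_cohomology_strict}). Concretely, a zigzag witnessing $\P^0$-formality pulls back along the retraction $\P\to\P^0$ to a zigzag of trivial $\P$-algebras between $\CdR{M}^{\mathrm{triv}}$ and $\HdR{M}^{\mathrm{triv}}$; prepending the quasi-isomorphism $\CdR{M}\simeq_\P \CdR{M}^{\mathrm{triv}}$ from the preceding corollary, and noting that the induced $\P$-algebra structure on $\HdR{M}$ coming from the actual structure on $\CdR{M}$ is canonically isomorphic to $\HdR{M}^{\mathrm{triv}}$ (since quasi-isomorphic DG $\P$-algebras have isomorphic $\P$-algebra cohomology), we obtain the desired zigzag exhibiting $\P$-formality of $M$.

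The argument is essentially formal and parallels the Poisson case \emph{mutatis mutandis}. The only mildly delicate step is the cohomological comparison in the last paragraph: one must identify the induced $\P$-algebra structure on $\HdR{M}$ with the trivial $\P$-algebra obtained by pulling back along $\P\to\P^0$, which is forced by the triviality of the $\P$-algebra structure on $\CdR{M}$ at the chain level. No further input beyond \Cref{thm:ECBV} and the preceding triviality corollary is required.
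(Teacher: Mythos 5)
Your proposal is correct and follows essentially the same route the paper takes: the paper leaves this corollary unproved but its template is the proof of \Cref{cor:hypercommutative_formality} (and its generalization \Cref{cor:general_formality}), namely restricting a commutative/$\P^0$ zigzag of quasi-isomorphisms along $\P\to\P^0$ to get a zigzag of trivial $\P$-algebras, and then invoking the preceding triviality corollary (itself a consequence of \Cref{thm:ECBV} via Hinich's change-of-operad equivalence). Your identification of $\ECBV^0=\C$ and the cohomological comparison in the last step are exactly the points the paper relies on implicitly.
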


\begin{remark}
  The exact Batalin-Vilkovisky operad and the exact commutative $BV_\infty$-operad are related. The former is a quotient of the latter, since we have a morphism $\ECBV\to\EBV$ defined by $\mu\mapsto\mu$, $i_1\mapsto i$, and $i_n\mapsto 0$ for $n\geq 2$. This map is a quasi-isomorphism by \Cref{thm:Poisson,thm:ECBV}. 
  
  We could have deduced most results in \Cref{sec:EBV} from \Cref{thm:ECBV}, without \Cref{thm:Poisson}. However, \Cref{thm:Poisson} is valuable in itself because it shows the impossibility of defining meaningful higher algebraic structures on the de Rham complex of a Poisson manifold out of the exterior product and the interior product with the bivector field.
\end{remark}

\printbibliography
\end{document}